\newtheorem{theorem}{Theorem}
\DeclareMathOperator*{\argmin}{arg\,min}
\newsavebox\myboxA
\newsavebox\myboxB
\newlength\mylenA
\newcommand*\xoverline[2][0.75]{%
    \sbox{\myboxA}{$\m@th#2$}%
    \setbox\myboxB\null
    \ht\myboxB=\ht\myboxA%
    \dp\myboxB=\dp\myboxA%
    \wd\myboxB=#1\wd\myboxA
    \sbox\myboxB{$\m@th\overline{\copy\myboxB}$}
    \setlength\mylenA{\the\wd\myboxA}
    \addtolength\mylenA{-\the\wd\myboxB}%
    \ifdim\wd\myboxB<\wd\myboxA%
       \rlap{\hskip 0.5\mylenA\usebox\myboxB}{\usebox\myboxA}%
    \else
        \hskip -0.5\mylenA\rlap{\usebox\myboxA}{\hskip 0.5\mylenA\usebox\myboxB}%
    \fi}
\journal{arXiv.org}
\newcommand{\TheTitle}{Intrusive acceleration strategies for Uncertainty Quantification for hyperbolic systems of conservation laws} 
\date{\today}
\journal{SIAM Journal on Scientific Computing}
\begin{document}

\begin{frontmatter}

\title{\TheTitle}

\author[adressJonas]{Jonas Kusch}
\author[adressJannick]{Jannick Wolters}
\author[adressMartin]{Martin Frank}

\address[adressJonas]{Karlsruhe Institute of Technology, Karlsruhe,
    jonas.kusch@kit.edu}
\address[adressJannick]{Karlsruhe Institute of Technology, Karlsruhe, jannick.wolters@kit.edu}
\address[adressMartin]{Karlsruhe Institute of Technology, Karlsruhe, martin.frank@kit.edu}

\begin{abstract}
Methods for quantifying the effects of uncertainties in hyperbolic problems can be divided into intrusive and non-intrusive techniques. Non-intrusive methods allow the usage of a given deterministic solver in a black-box manner, while being embarrassingly parallel. On the other hand, intrusive modifications allow for certain acceleration techniques. Moreover, intrusive methods are expected to reach a given accuracy with a smaller number of unknowns compared to non-intrusive techniques. This effect is amplified in settings with high dimensional uncertainty. A downside of intrusive methods is the need to guarantee hyperbolicity of the resulting moment system. In contrast to stochastic-Galerkin (SG), the Intrusive Polynomial Moment (IPM) method is able to maintain hyperbolicity at the cost of solving an optimization problem in every spatial cell and every time step.\\
In this work, we propose several acceleration techniques for intrusive methods and study their advantages and shortcomings compared to the non-intrusive Stochastic Collocation method. When solving steady problems with IPM, the numerical costs arising from repeatedly solving the IPM optimization problem can be reduced by using concepts from PDE-constrained optimization. Integrating the iteration from the numerical treatment of the optimization problem into the moment update reduces numerical costs, while preserving local convergence. Additionally, we propose an adaptive implementation and efficient parallelization strategy of the IPM method. The effectiveness of the proposed adaptations is demonstrated for multi-dimensional uncertainties in fluid dynamics applications, resulting in the observation of requiring a smaller number of unknowns to achieve a given accuracy when using intrusive methods. Furthermore, using the proposed acceleration techniques, our implementation reaches a given accuracy faster than Stochastic Collocation.
\end{abstract}

\begin{keyword}
uncertainty quantification, conservation laws, hyperbolic, intrusive, stochastic-Galerkin, Collocation, Intrusive Polynomial Moment Method
\end{keyword}

\end{frontmatter}

\section{Introduction}
Hyperbolic equations play an important role in various research areas such as fluid dynamics or plasma physics. Efficient numerical methods combined with robust implementations are widely available for these problems, however they do not account for uncertainties which can arise in measurement data or modeling assumptions. Including the effects of uncertainties in differential equations has become an important topic in the last decades. 

One strategy to represent the solution's dependence on uncertainties is to use a polynomial chaos (PC) expansion \cite{wiener1938homogeneous,xiu2002wiener}, i.e. the uncertainty space is spanned by polynomial basis functions. The remaining task is then to determine adequate expansion coefficients, often called moments or PC coefficients. Numerical methods for approximating these coefficients can be divided into intrusive and non-intrusive techniques. A popular non-intrusive method is the Stochastic Collocation (SC) method, see e.g. \cite{xiu2005high,babuvska2007stochastic,loeven2008probabilistic}, which computes the moments with the help of a numerical quadrature rule. Commonly, SC uses sparse grids, since they possess a reduced number of collocation points for multi-dimensional problems. Since the solution at a fixed quadrature point can be computed by a standard deterministic solver, the SC method does not require a significant implementation effort. Furthermore, SC is embarrassingly parallel, since the required computations decouple and the workload can easily be distributed across several processors. 

The main idea of intrusive methods is to derive a system of equations describing the time evolution of the moments which can then be solved with a deterministic numerical scheme. A popular approach to describe the moment system is the stochastic-Galerkin (SG) method \cite{ghanem2003stochastic}, which chooses a polynomial basis ansatz of the solution and performs a Galerkin projection to derive a closed system of equations. One significant drawback of SG is, that its moment system is not necessarily hyperbolic \cite{poette2009uncertainty}. A generalization of stochastic-Galerkin, which ensures hyperbolicity is the Intrusive Polynomial Moment (IPM) method \cite{poette2009uncertainty}. Instead of performing the PC expansion on the solution, the IPM method represents the entropy variables with polynomials. Besides yielding a hyperbolic moment system, the IPM method has several advantages: Choosing a quadratic entropy yields the stochastic-Galerkin moment system, i.e. IPM generalizes different intrusive methods. Furthermore, at least for scalar problems, IPM is significantly less oscillatory compared to SG \cite{kusch2017maximum}. Also, as discussed in \cite{poette2009uncertainty}, when choosing a physically correct entropy of the deterministic problem, the IPM solution dissipates the expectation value of the entropy, i.e. the IPM method yields a physically correct entropy solution. Unfortunately, the desirable properties of IPM come along with significantly increased numerical costs, since IPM requires the repeated computation of the entropic expansion coefficients from the moment vector, which involves solving a convex optimization problem. However, IPM and minimal entropy methods in general are well suited for modern HPC architectures \cite{garrett2015optimization}.

When studying hyperbolic equations, the moment approximations of various methods such as stochastic-Galerkin \cite{le2004uncertainty,kusch2018filtered}, IPM \cite{kusch2018filtered,poette2019contribution} and Stochastic Collocation \cite{barth2013non,dwight2013adaptive} tend to show incorrect discontinuities in certain regions of the physical space. These non-physical structures dissolve when the number of basis functions is increased \cite{pettersson2009numerical,offner2017stability} or when artificial diffusion is added through either the spatial numerical method \cite{offner2017stability} or by filters \cite{kusch2018filtered}. Also, a multi-element approach which divides the uncertain domain into cells and uses piece-wise polynomial basis functions to represent the solution has proven to mitigate non-physical discontinuities \cite{wan2006multi,durrwachter2018hyperbolicity}. Non-intrusive Monte-Carlo methods \cite{mishra2012multi,mishra2012sparse,mishra2016numerical}, which randomly sample input uncertainties to compute quantities of interest are robust, but suffer from a slow rate of convergence while again lacking the ability to use adaptivity to its full extent. Discontinuous structures commonly arise on a small portion of the space-time domain. Therefore, intrusive methods seem to be an adequate choice since they are well suited for adaptive strategies. By locally increasing the polynomial order \cite{tryoen2012adaptive,kroker2012finite,giesselmann2017posteriori} or adding artificial viscosity \cite{kusch2018filtered} at certain spatial positions and time steps in which complex structures such as discontinuities occur, a given accuracy can be reached with significantly reduced numerical costs. In addition to that, the number of moments needed to obtain a certain order with intrusive methods is smaller than the number of quadrature points for SC. Furthermore, collocation methods typically require a higher number of unknowns than intrusive methods to reach a given accuracy \cite{xiu2009fast,alekseev2011estimation}. Therefore, one aim should be to accelerate intrusive methods, since they can potentially outperform non-intrusive methods in complex and high-dimensional settings. \\

In this paper, we propose acceleration techniques for intrusive methods and compare them against Stochastic Collocation. For steady and unsteady problems, we use adaptivity, for which intrusive methods provide a convenient framework: 
\begin{itemize}
\item Since complex structures in the uncertain domain tend to arise in small portions of the spatial mesh, our aim is to locally increase the accuracy of the stochastic discretization in regions that show a complex structure in the random domain, while choosing a low order method in the remainder. Such an adaptive treatment cannot be realized with non--intrusive methods, since one needs to break up the black-box approach. To guarantee an efficient implementation, we propose an adaptive discretization strategy for IPM.
\end{itemize}
A steady problem provides different opportunities to take advantage of features of intrusive methods: 
\begin{itemize}
\item When using adaptivity, one can perform a large number of iterations to the steady state solution on a low number of moments and increase the maximal truncation order when the distance to the steady state has reached a specified barrier. Consequently, a large number of iterations will be performed by a cheap, low order method, i.e. we can reduce numerical costs. 
\item Perform an inexact map from the moments to the entropic expansion coefficients for IPM: Since the moments during the iteration process are inaccurate, i.e. they are not the correct steady state solution, we propose to not fully converge the dual iteration, which solves the IPM optimization problem. Consequently, the entropic expansion coefficients and the moments are converged simultaneously to their steady state, which is similar to the idea of One-Shot optimization in shape optimization \cite{hazra2005aerodynamic}.
\end{itemize}

The effectiveness of these acceleration ideas are tested by comparing results with Stochastic Collocation for the NACA test case \cite{jacobs1933characteristics} with uncertainties as well as a bent shock tube problem. Our numerical studies show the following main results:
\begin{itemize}
\item In our test cases, the need to solve an optimization problem when using the IPM method leads to a significantly higher run time than SC and SG. However when using the discussed acceleration techniques, IPM requires the shortest time to reach a given accuracy.
\item Comparing SG with IPM, one observes that for the same number of unknowns, SG yields more accurate expectation values, whereas IPM shows improved variance approximations.
\item By studying aliasing effects, we show that SC requires a higher number of unknowns than intrusive methods (even for a one-dimensional uncertainty) to reach the same accuracy level.
\item Using sparse grids for the IPM discretization when the space of uncertainty is multi-dimensional, the number of quadrature points needed to guarantee sufficient regularity of the Hessian matrix is significantly increased.
\end{itemize}
The IPM and SG calculations use a semi-intrusive numerical method, meaning that the discretization allows recycling a given deterministic code to generate the IPM solver. While facilitating the task of implementing general intrusive methods, this framework reduces the number of operations required to compute numerical fluxes. Also, it provides the ability to base the intrusive method on the same deterministic solver as used in the implementation of a black-box fashion Stochastic Collocation code, which allows for, what we believe, a fair comparison between intrusive and non-intrusive methods. The code is publicly available to allow reproducibility \cite{uqcreator}.

The paper is structured as follows: After the introduction, we present the discussed methods in more detail in section~\ref{sec:background}. The numerical discretization as well as the implementation and structure of the semi-intrusive method is introduced in section \ref{sec:framework}. In section~\ref{sec:OneShotIPM}, we discuss the idea of not converging the dual iteration. Section~\ref{sec:adaptivity} extends the presented numerical framework to an algorithm making use of adaptivity. Implementation and parallelization details are given in section~\ref{sec:parallel}. A comparison of results computed with the presented methods is then given in section \ref{sec:results}, followed by a summary and outlook in section \ref{sec:summary_outlook}.

\section{Background}
\label{sec:background}
In the following, we briefly introduce the notation and methods used in this work. A general hyperbolic set of equations with random initial data can be written as
\begin{subequations}\label{eq:hyperbolicProblem}
\begin{align}
\label{eq:fulleq}\partial_t \bm{u}(t,\bm{x},\bm{\xi}) + \nabla&\cdot\bm{f}(\bm{u}(t,\bm{x},\bm{\xi})) = \bm{0} \enskip \text{ in } D, \\ \label{eq:ic}
\bm{u}(t=0,\bm{x},&\bm{\xi}) = \bm{u}_{\text{IC}}(\bm{x},\bm{\xi}),
\end{align}
\end{subequations}
where the solution $\bm u\in\mathbb{R}^m$ depends on time $t\in\mathbb{R}^+$, spatial position $\bm{x}\in D\subseteq \mathbb{R}^d$ as well as a vector of independent random variables $\bm{\xi}\in\Theta\subseteq\mathbb{R}^p$ with given probability density functions $f_{\Xi,i}(\xi_i)$ for $i = 1,\cdots,p$. Hence, the probability density function of $\bm{\xi}$ is $f_{\Xi}(\bm\xi):=\prod_{i=1}^p f_{\Xi,i}(\xi_i)$. The physical flux is given by $\bm{f}:\mathbb{R}^m\to\mathbb{R}^{d\times m}$. To simplify notation, we assume that only the initial condition is random, i.e. $\bm{\xi}$ enters through the definition of $\bm{u}_{IC}$. Equations \eqref{eq:hyperbolicProblem} are usually supplemented with boundary conditions, which we will specify later for the individual problems.

Due to the randomness of the solution, one is interested in determining the expectation value or the variance, i.e.
\begin{align*}
\text{E}[\bm{u}] = \langle \bm{u} \rangle,\qquad \text{Var}[\bm{u}] = \langle \left( \bm{u}-\text{E}[\bm{u}]\right)^2\rangle,
\end{align*}
where we use the bracket operator $\langle \cdot \rangle := \int_{\Theta} \cdot f_{\Xi}(\bm\xi)d\xi_1 \cdots d\xi_p$. To approximate quantities of interest (such as expectation value, variance or higher order moments), the solution is spanned with a set of polynomial basis functions $\varphi_{i}:\Theta\to\mathbb{R}$ such that for the multi-index $i = (i_1,\cdots,i_p)$ we have $|i| \leq M$. Note that this yields
\begin{align}\label{eq:numberBasisFcts}
N:=\begin{pmatrix}
M+p \\ p
\end{pmatrix}
\end{align}
basis functions when defining $|i|:=\sum_{n = 1}^p |i_n|$. Commonly, these functions are chosen to be orthonormal polynomials \cite{wiener1938homogeneous} with respect to the probability density function, i.e. $\langle \varphi_i \varphi_j \rangle =\prod_{n=1}^p\delta_{i_nj_n}$. The generalized polynomial chaos (gPC) expansion \cite{xiu2002wiener} approximates the solution by
\begin{align}\label{eq:SGClosure}
\mathcal{U}(\bm{\hat u};\bm\xi):= \sum_{|i|\leq M} \bm{\hat{u}}_i\varphi_i(\bm{\xi}) = \bm{\hat{u}}^T\bm{\varphi}(\bm\xi),
\end{align}
where the deterministic expansion coefficients $\bm{\hat{u}}_i\in\mathbb{R}^m$ are called moments. To allow a more compact notation, we collect the $N$ moments for which $\vert i \vert \leq M$ holds in the moment matrix $\bm{\hat{u}}:=(\bm{\hat{u}}_i)_{|i|\leq M}\in\mathbb{R}^{N\times m}$ and the corresponding basis functions in $\bm{\varphi}:=(\varphi_i)_{|i|\leq M}\in\mathbb{R}^{N}$. In the following, the dependency of $\mathcal{U}$ on $\bm \xi$ will occasionally be omitted for sake of readability. The solution ansatz \eqref{eq:SGClosure} is $L^2$-optimal, if the moments are chosen to be the Fourier coefficients $\bm{\hat u}_i \equiv \langle \bm{u}\varphi_i \rangle\in\mathbb{R}^m$. One can also use the ansatz \eqref{eq:SGClosure} to compute the quantities of interest as
\begin{align*}
\text{E}[\mathcal{U}(\bm{\hat u})] = \bm{\hat u}_0,\quad \text{Var}[\mathcal{U}(\bm{\hat u})] = \text{E}[\mathcal{U}(\bm{\hat u})^2] - \text{E}[\mathcal{U}(\bm{\hat u})]^2 = \left(\sum_{i = 1}^N \hat{u}_{\ell i}^2\right)_{\ell = 1,\cdots,m}.
\end{align*}

The core idea of the Stochastic Collocation method is to compute the moments in the gPC expansion with a quadrature rule. Given a set of $Q$ quadrature weights $w_k$ and quadrature points $\bm{\xi}_k$, the moments are approximated by
\begin{align*}
\bm{\hat u}_i = \langle \bm{u}\varphi_i \rangle \approx \sum_{k = 1}^{Q}w_k \bm{u}({t,\bm{x},\bm{\xi}_k})\varphi_i(\bm{\xi}_k)f_{\Xi}(\bm{\xi}_k).
\end{align*} 
Hence, the moments can be computed by running a given deterministic solver for the original problem at each quadrature point. To reduce numerical costs in multi-dimensional settings, SC commonly uses sparse grids as quadrature rule: While tensorized quadrature sets require $O(M^p)$ quadrature points to integrate polynomials of maximal degree $M$ exactly, sparse grids are designed to integrate polynomials of total degree $M$, for which they only require $O(M(\log_2(M)^{p-1}))$ quadrature points, see e.g. \cite{trefethen2017cubature}.

Intrusive methods derive a system which directly describes the time evolution of the moments: Plugging the solution ansatz \eqref{eq:SGClosure} into the set of equations \eqref{eq:hyperbolicProblem} and projecting the resulting residual to zero yields the stochastic-Galerkin moment system
\begin{subequations}\label{eq:SGMomentSystem}
\begin{align}
\partial_t \bm{\hat u}_i(t,\bm{x}) + \nabla&\cdot\langle\bm{f}(\mathcal{U}(\bm{\hat u})) \varphi_i\rangle = \bm{0}, \\
\bm{\hat u}_i(t=0,\bm{x}&) = \langle\bm{u}_{\text{IC}}(\bm{x},\cdot)\varphi_i\rangle,
\end{align}
\end{subequations}
with $|i|\leq M$. As already mentioned, the SG moment system is not necessarily hyperbolic. To ensure hyperbolicity, the IPM method uses a solution ansatz which minimizes a given entropy under a moment constraint instead of a polynomial expansion \eqref{eq:SGClosure}. For a given convex entropy $s:\mathbb{R}^m\to\mathbb{R}$ for the original problem \eqref{eq:hyperbolicProblem}, the IPM solution ansatz is given by
\begin{align}\label{eq:primalProblem}
\mathcal{U}(\bm{\hat u}) = \argmin_{\bm u} \langle s(\bm u) \rangle \enskip \text{ subject to } \bm{\hat u}_i = \langle \bm u \varphi_i \rangle \text{ for } |i| \leq M.
\end{align}
Rewritten in its dual form, \eqref{eq:primalProblem} is transformed into an unconstrained optimization problem. Defining the variables $\bm{\lambda}_i\in\mathbb{R}^m$, where $i$ is again a multi index, gives the unconstrained dual problem
\begin{align}\label{eq:dualProblem}
 \bm{\hat \lambda}(\bm{\hat u}) := \argmin_{\bm{\lambda} \in \mathbb{R}^{N \times m}}
  \left\{\langle s_*(\bm{\lambda}^T \bm\varphi)\rangle - \sum_{|i|\leq M}\bm{\lambda}_i^T \bm{\hat u}_i\right\},
\end{align}
where $s_*:\mathbb{R}^m\to\mathbb{R}$ is the Legendre transformation of $s$, and $\bm{ \hat\lambda}:=(\bm{\hat{\lambda}}_i)_{|i|\leq M}\in \mathbb{R}^{N \times m}$ are called the dual variables. The solution to \eqref{eq:primalProblem} is then given by
\begin{align}\label{eq:ansatz}
 \mathcal{U}(\bm{\hat u}) = \left( \nabla_{\bm{u}} s \right)^{-1}(\bm{\hat{\lambda}}(\bm{\hat u})^T \bm{\varphi}).
\end{align}
When plugging this ansatz into the original equations \eqref{eq:hyperbolicProblem} and projecting the resulting residual to zero again yields the moment system \eqref{eq:SGMomentSystem}, but with the ansatz \eqref{eq:ansatz} instead of \eqref{eq:SGClosure}.
\section{Discretization of the IPM system}
\label{sec:framework}
\subsection{Finite Volume Discretization}
In the following, we discretize the moment system in space and time according to \cite{kusch2017maximum}. Due to the fact, that stochastic-Galerkin can be interpreted as IPM with a quadratic entropy, it suffices to only derive a discretization of the IPM moment system. Hence, we discretize the system \eqref{eq:SGMomentSystem} with the more general IPM solution ansatz \eqref{eq:ansatz}.  
Omitting initial conditions and assuming a one-dimensional spatial domain, we can write the IPM system  as
\begin{align*}
\partial_t \bm{\hat u}+\partial_x \bm{F}(\bm{\hat u}) = \bm{0}
\end{align*}
with the flux $\bm{F}:\mathbb{R}^{N\times m}\to\mathbb{R}^{N\times m}$, $\bm{F}(\bm{\hat u})=\langle \bm f(\mathcal{U}(\bm{\hat u}))\bm{\varphi}^T \rangle^T$. Note that the inner transpose represents a dyadic product and therefore the outer transpose is applied to a matrix. Due to hyperbolicity of the IPM moment system, one can use a finite-volume method to approximate the time evolution of the IPM moments. We choose the discrete unknowns which represent the solution to be the spatial averages over each cell at time $t_n$, given by
\begin{align*}
\bm{\hat u}_{ij}^n \simeq \frac{1}{\Delta x}\int_{x_{j-1/ 2}}^{x_{j+1/ 2}}\bm{\hat u}_i(t_n,x) dx.
\end{align*}
If a moment vector in cell $j$ at time $t_n$ is denoted as $\bm{\hat u}_j^n = (\bm{\hat u}_{ij}^n)_{\vert i\vert\leq M}\in\mathbb{R}^{N\times m}$, the finite-volume scheme can be written in conservative form with the numerical flux $\bm{G}:\mathbb{R}^{N\times m}\times\mathbb{R}^{N\times m}\to\mathbb{R}^{N\times m}$ as
\begin{align}\label{eq:IPMDiscretization}
\bm{\hat u}_{j}^{n+1} = \bm{\hat u}_{j}^{n}  - \frac{\Delta t}{\Delta x}\left( \bm{G}(\bm{\hat u}_{j}^{n},\bm{\hat u}_{j+1}^{n})- \bm{G}(\bm{\hat u}_{j-1}^{n},\bm{\hat u}_{j}^{n})\right)
\end{align}
for $j = 1,\cdots,N_x$ and $n = 0,\cdots,N_t$. Here, the number of spatial cells is denoted by $N_x$ and the number of time steps by $N_t$.
The numerical flux is assumed to be consistent, i.e. $\bm{G}(\bm{\hat{u}},\bm{\hat{u}})=\bm{F}(\bm{\hat{u}})$.

When a consistent numerical flux $\bm g:\mathbb{R}^m\times\mathbb{R}^m\to\mathbb{R}^m$, $\bm g = \bm g(\bm u_\ell, \bm u_r)$ is available for the original problem \eqref{eq:hyperbolicProblem}, then for the IPM system we can simply take the numerical flux
\begin{align*}
 \bm{\tilde G}(\bm{\hat u}_{j}^n,\bm{\hat u}_{j+1}^{n}) = \langle \bm g(\mathcal{U}(\bm{\hat u}_j^n),\mathcal{U}(\bm{\hat u}_{j+1}^n))\bm{\varphi}^T\rangle^T
\end{align*}
in \eqref{eq:IPMDiscretization}. Commonly, this integral cannot be evaluated analytically and therefore needs to be approximated by a quadrature rule
\begin{align*}
\langle h \rangle \approx \langle h \rangle_{Q} := \sum_{k=1}^Q w_k h(\bm{\xi}_k)f_{\Xi}(\bm{\xi}_k).
\end{align*}
The approximated numerical flux then becomes
\begin{align}\label{eq:numericalFluxIPM}
 \bm{G}(\bm{\hat u}_{j}^n,\bm{\hat u}_{j+1}^{n}) = \langle \bm g(\mathcal{U}(\bm{\hat u}_j^n),\mathcal{U}(\bm{\hat u}_{j+1}^n))\bm{\varphi}^T\rangle^T_Q.
\end{align}
Note that the numerical flux requires evaluating the ansatz $\mathcal{U}(\bm{\hat u}_j^n)$. To simplify notation, we define $\bm{u}_{s}:\mathbb{R}^m \to \mathbb{R}^m$,
\begin{align*}
\bm{u}_{s}(\bm\Lambda):=\left( \nabla_{\bm{u}} s \right)^{-1}(\bm\Lambda),
\end{align*}
meaning that the IPM ansatz \eqref{eq:ansatz} at cell $j$ in timestep $n$ can be written as
\begin{align*}
\mathcal{U}(\bm{\hat u}_j^n) = \bm{u}_{s}(\bm{\hat{\lambda}}(\bm{\hat u}_j^n)^T \bm{\varphi}).
\end{align*}
The computation of the dual variables $\bm{\hat\lambda}_j^n:=\bm{\hat\lambda}(\bm{\hat u}_j^n)$ requires solving the dual problem \eqref{eq:dualProblem} for the moment vector $\bm{\hat u}_{j}^{n}$. Therefore, to determine the dual variables for a given moment vector $\bm{\hat{u}}$, the cost function
\begin{align}\label{eq:L}
L(\bm{\lambda};\bm{\hat{u}}) := \langle s_*(\bm{\lambda}^T \bm\varphi)\rangle_Q - \sum_{i\leq M}\bm{\lambda}_i^T \bm{\hat u}_i
\end{align}
needs to be minimized. Hence, one needs to find the root of
\begin{align*}
\nabla_{\bm{\lambda}}L(\bm{\lambda};\bm{\hat{u}}) = \langle \nabla s_*(\bm{\lambda}^T \bm\varphi)\bm\varphi^T\rangle_Q^T - \bm{\hat u} = \langle \bm u_s(\bm{\lambda}^T \bm\varphi)\bm\varphi^T\rangle_Q^T - \bm{\hat u},
\end{align*}
where we used $\nabla s_* \equiv \bm u_s$. The root is usually determined by using Newton's method. For simplicity, let us define the full gradient of the Lagrangian to be $\nabla_{\bm{\lambda}}L(\bm{\lambda};\bm{\hat{u}})\in\mathbb{R}^{N\cdot m}$, i.e. we store all entries in a vector. Newton's method uses the iteration function $\bm{d}:\mathbb{R}^{N\times m}\times\mathbb{R}^{N\times m}\to\mathbb{R}^{N\times m}$,
\begin{align}\label{eq:dualIterationFunction}
\bm{d}(\bm{\lambda},\bm{\hat{u}}):= \bm{\lambda}-\bm{H}(\bm{\lambda})^{-1}\cdot\nabla_{\bm{\lambda}}L(\bm{\lambda};\bm{\hat{u}}),
\end{align}
where $\bm H\in\mathbb{R}^{N \cdot m\times N\cdot m}$ is the Hessian of \eqref{eq:L}, given by
\begin{align*}
\bm{H}(\bm{\lambda}) := \langle \nabla \bm{u}_{s} (\bm{\lambda}^T\bm{\varphi})\otimes\bm{\varphi}\bm{\varphi}^T\rangle_Q^{T}.
\end{align*}
The function $\bm d$ will in the following be called dual iteration function. Now, the Newton iteration $l$ for spatial cell $j$ is given by
\begin{align}\label{eq:dualIteration1}
\bm{\lambda}^{(l+1)}_j = \bm{d}(\bm{\lambda}_j^{(l)},\bm{\hat{u}}_j).
\end{align}
The exact dual state is then obtained by computing the fixed point of $\bm{d}$, meaning that one converges the iteration \eqref{eq:dualIteration1}, i.e. $\bm{\hat\lambda}_j^n:=\bm{\hat\lambda}(\bm{\hat u}_j^n)=\lim_{l\rightarrow\infty}\bm{d}(\bm{\lambda}_j^{(l)},\bm{\hat{u}}_j^n)$.
To obtain a finite number of iterations for the iteration in cell $j$, the stopping criterion 
\begin{align}\label{eq:tauCrit}
\sum_{i=0}^m\left\Vert \nabla_{\bm{\lambda_i}}L(\bm{\lambda}_j^{(l)};\bm{\hat{u}}_j^n) \right\Vert < \tau
\end{align}
is used.

We now write down the entire scheme: To obtain a more compact notation, we define
\begin{align}\label{eq:momentIterationFunction}
\bm{c}\left(\bm{\lambda}_{\ell},\bm{\lambda}_c,\bm{\lambda}_r\right):= \langle \bm u_{s}(\bm{\lambda}_c^T\bm{\varphi})\bm{\varphi}^T\rangle_Q^T - \frac{\Delta t}{\Delta x}\left(\langle \bm g(\bm u_{s}(\bm{\lambda}_c^T\bm{\varphi}),\bm u_{s}(\bm{\lambda}_r^T\bm{\varphi}))\bm{\varphi}^T\rangle_Q^T-\langle \bm g(\bm u_{s}(\bm{\lambda}_{\ell}^T\bm{\varphi}),\bm u_{s}(\bm{\lambda}_c^T\bm{\varphi}))\bm{\varphi}^T\rangle_Q^T\right).
\end{align}
The moment iteration is then given by
\begin{align}\label{eq:momentIteration}
\bm{\hat u}_j^{n+1} = \bm{c}\left(\bm{\hat\lambda}(\bm{\hat u}_{j-1}^n),\bm{\hat\lambda}(\bm{\hat u}_{j}^n),\bm{\hat\lambda}(\bm{\hat u}_{j+1}^n)\right),
\end{align}
where the map from the moment vector to the dual variables, i.e. $\bm{\hat\lambda}(\bm{\hat u}_{j}^n)$, is obtained by iterating
\begin{align}\label{eq:dualIteration}
\bm{\lambda}_j^{(l+1)} = \bm{d}(\bm{\lambda}_{j}^{(l)};\bm{\hat u}_j^{n}).
\end{align}
until condition \eqref{eq:tauCrit} is fulfilled. This gives Algorithm \ref{alg:IPM}.

\begin{algorithm}[H]
\begin{algorithmic}[1]
\For{$j=0$ to $N_x+1$}
\State $\bm{u}_j^0 \leftarrow \frac{1}{\Delta x} \int_{x_{j-1/ 2}}^{x_{j+1/ 2}} \langle u_{\text{IC}}(x, \cdot) \bm{\varphi} \rangle_Q dx$
\EndFor
\For{$n=0$ to $N_t$}
\For{$j=0$ to $N_x+1$}
\State $\bm{\lambda}_j^{(0)} \leftarrow \bm{\hat \lambda}_j^{n}$
\While{\eqref{eq:tauCrit} is violated}
\State $\bm{\lambda}_j^{(l+1)} \leftarrow \bm{d}(\bm{\lambda}_{j}^{(l)};\bm{\hat u}_j^{n})$
\State $l \leftarrow l+1$
\EndWhile
\State $\bm{\hat \lambda}_j^{n+1} \leftarrow \bm{\lambda}_j^{(l)}$
\EndFor
\For{$j=1$ to $N_x$}
\State $\bm{\hat u}_j^{n+1} \leftarrow \bm{c}(\bm{\hat \lambda}_{j-1}^{n+1},\bm{\hat \lambda}_j^{n+1},\bm{\hat \lambda}_{j+1}^{n+1})$
\EndFor
\EndFor
\end{algorithmic}
\caption{IPM algorithm}
\label{alg:IPM}
\end{algorithm}

\subsection{Properties of the kinetic flux}
\label{sec:costNumFlux}

A straight-forward implementation is ensured by the choice of the numerical flux \eqref{eq:numericalFluxIPM}. This choice of the numerical flux is common in the field of transport theory, where it is called the \textit{kinetic flux} or \textit{kinetic scheme}, see e.g. \cite{deshpande1986kinetic,harten1983upstream,perthame1990boltzmann,perthame1992second}. By simply taking moments of a given numerical flux for the deterministic problem, the method can easily be applied to various physical problems whenever an implementation of $\bm g = \bm g(\bm u_\ell, \bm u_r)$ is available. Therefore, we call the proposed numerical method \textit{semi-intrusive}.

Intrusive numerical methods which compute arising integrals analytically and therefore directly depend on the moments (i.e. they do not necessitate the evaluation of the gPC expansion on quadrature points) can be constructed by performing a gPC expansion on the system flux directly \cite{debusschere2004numerical}. Examples can be found in \cite{hu2015stochastic,hu2016stochastic,tryoen2010instrusive,durrwachterahigh} for the computation of numerical fluxes and sources. While the analytic computation of arising integrals is not always more efficient \cite[Section 6]{ghanem1998stochastic}, it can also complicate recycling a deterministic solver. See \ref{app:costNumFlux} for a comparison of numerical costs when using Burgers' equation. However, when not using a quadratic entropy in the IPM method or when the physical flux of the deterministic problem is not a polynomial, it is not clear how many quadrature points the numerical quadrature rule requires to guarantee a sufficiently small quadrature error. We will study the approximation properties of IPM with different quadrature orders in Section~\ref{sec:resultsNACA1D}.
\section{One-Shot IPM}
\label{sec:OneShotIPM}

In the following section we only consider steady state problems, i.e. equation \eqref{eq:fulleq} reduces to
\begin{align}\label{eq:hyperbolicProblemSteady}
\nabla\cdot\bm{f}(\bm{u}(\bm{x},\bm{\xi})) = \bm{0} \enskip \text{ in } D
\end{align}
with adequate boundary conditions. A general strategy for computing the steady state solution to \eqref{eq:hyperbolicProblemSteady} is to introduce a pseudo-time and numerically treat \eqref{eq:hyperbolicProblemSteady} as an unsteady problem. A steady state solution is then obtained by iterating in pseudo-time until the solution remains constant. It is important to point out that the time it takes to converge to a steady state solution is crucially affected by the chosen initial condition and its distance to the steady state solution.
Similar to the unsteady case \eqref{eq:hyperbolicProblem}, we can again derive a moment system for \eqref{eq:hyperbolicProblemSteady} given by
\begin{align}\label{eq:MomentSystemSteady}
\nabla\cdot\langle\bm{f}(\bm{u}(\bm{x},\bm{\xi}))\bm{\varphi}^T\rangle^T = \bm{0} \enskip \text{ in } D
\end{align}
which is again needed for the construction of intrusive methods. By introducing a pseudo-time and using the IPM closure, we obtain the same system as in \eqref{eq:SGMomentSystem}, i.e. Algorithm \ref{alg:IPM} can be used to iterate to a steady state solution. Note that now, the time iteration is not performed for a fixed number of time steps $N_t$, but until the condition
\begin{align}\label{eq:residualSteady}
\sum_{j = 1}^{N_x} \Delta x_j \Vert \bm{\hat{u}}_j^n - \bm{\hat{u}}_j^{n-1} \Vert \leq \varepsilon
\end{align}
is fulfilled. Condition \eqref{eq:residualSteady}, which is for example being used in the SU2 code framework \cite{economon2015su2}, measures the change of the solution by a single time iteration. Note, that in order to obtain an estimate of the distance to the steady state solution, one has to include the Lipschitz constant of the corresponding fixed point problem. Since one is generally interested in low order moments such as the expectation value, the residual \eqref{eq:residualSteady} can be modified by only accounting for the zero order moments.

In this section we aim at breaking up the inner loop in the IPM Algorithm \ref{alg:IPM}, i.e. to just perform one iteration of the dual problem in each time step. Consequently, the IPM reconstruction given by \eqref{eq:primalProblem} is not done exactly, meaning that the reconstructed solution does not minimize the entropy while not fulfilling the moment constraint. However, the fact that the moment vectors are not yet converged to the steady solution seems to permit such an inexact reconstruction. Hence, we aim at iterating the moments to steady state and the dual variables to the exact solution of the IPM optimization problem \eqref{eq:primalProblem} simultaneously.
By successively performing one update of the moment iteration and one update of the dual iteration, we obtain 
\begin{subequations}\label{eq:oneshotIPM}
\begin{align}
&\bm{\lambda}_{j}^{n+1} =  \bm{d}(\bm{\lambda}_j^{n},\bm{u}_j^{n}) \enskip \text{ for all }j \label{eq:oneshotIPMdual}\\
&\bm{u}_j^{n+1} =  \bm{c}\left(\bm{\lambda}_{j-1}^{n+1},\bm{\lambda}_{j}^{n+1},\bm{\lambda}_{j+1}^{n+1}\right) \enskip \text{ for all }j \label{eq:oneshotIPMmoment}.
\end{align}
\end{subequations}
This yields Algorithm \ref{alg:osIPM}.
\begin{algorithm}[H]
\begin{algorithmic}[1]
\For{$j=0$ to $N_x+1$}
\State $\bm{u}_j^0 \leftarrow \frac{1}{\Delta x} \int_{x_{j-1/ 2}}^{x_{j+1/ 2}} \langle u_{\text{IC}}(x, \cdot) \bm{\varphi} \rangle_Q dx$
\EndFor
\While{\eqref{eq:residualSteady} is violated}
\For{$j=1$ to $N_x$}
\State $\bm{\lambda}_j^{n+1} \leftarrow \bm{d}(\bm{\lambda}_{j}^{n};\bm{\hat u}_j^{n})$
\EndFor
\For{$j=1$ to $N_x$}
\State $\bm{\hat u}_j^{n+1} \leftarrow \bm{c}(\bm{\lambda}_{j-1}^{n+1},\bm{\lambda}_j^{n+1},\bm{\lambda}_{j+1}^{n+1})$
\EndFor
\State $n \leftarrow n+1$
\EndWhile
\end{algorithmic}
\caption{One-Shot IPM implementation}
\label{alg:osIPM}
\end{algorithm}
We call this method One-Shot IPM, since it is inspired by One-Shot optimization, see for example \cite{hazra2005aerodynamic}, which uses only a single iteration of the primal and dual step in order to update the design variables. Note that the dual variables from the One-Shot IPM iteration are written without a hat to indicate that they are not the exact solution of the dual problem.

In the following, we will show that this iteration converges, if the chosen initial condition is sufficiently close to the steady state solution. For this we take an approach commonly chosen to prove local convergence properties of Newton's method: In Theorem \ref{th:Contractive}, we show that the iteration function is contractive at its fixed point and conclude in Theorem \ref{th:localConvergence} that this yields local convergence. Hence, we preserve the convergence property of the original IPM method, which uses Newton's method and therefore only converges locally as well.
\begin{theorem}\label{th:Contractive}
Assume that the classical IPM iteration is contractive at its fixed point $\bm{\hat u}^*$. Then the Jacobian $\bm{J}$ of the One-Shot IPM iteration \eqref{eq:oneshotIPM} has a spectral radius $\rho(\bm{J})<1$ at the fixed point $(\bm{\lambda}^*,\bm{\hat u}^*)$.
\end{theorem}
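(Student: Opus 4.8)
The plan is to compute the Jacobian $\bm{J}$ of the One-Shot map explicitly at the fixed point and exploit its block structure. Writing the One-Shot step \eqref{eq:oneshotIPM} as a single map $(\bm{\lambda},\bm{\hat u}) \mapsto \big(\bm{d}(\bm{\lambda},\bm{\hat u}),\, \bm{c}(\bm{d}(\bm{\lambda},\bm{\hat u}))\big)$ — where the moment update $\bm{c}$ from \eqref{eq:momentIterationFunction} is understood to act through the neighbor stencil — its Jacobian is the $2\times 2$ block matrix
\[
\bm{J} = \begin{pmatrix} \partial_{\bm{\lambda}}\bm{d} & \partial_{\bm{\hat u}}\bm{d} \\ \partial_{\bm{\lambda}}\bm{c}\,\partial_{\bm{\lambda}}\bm{d} & \partial_{\bm{\lambda}}\bm{c}\,\partial_{\bm{\hat u}}\bm{d} \end{pmatrix},
\]
with all derivatives evaluated at $(\bm{\lambda}^*,\bm{\hat u}^*)$ and $\partial_{\bm{\lambda}}\bm{c}$ taken at $\bm{\lambda}^*=\bm{d}(\bm{\lambda}^*,\bm{\hat u}^*)$. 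The decisive simplification comes from the two partial derivatives of the Newton iteration function \eqref{eq:dualIterationFunction}.

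First I would evaluate $\partial_{\bm{\lambda}}\bm{d}$ at the fixed point. Since $\bm{\lambda}^*$ is the exact dual solution for $\bm{\hat u}^*$, we have $\nabla_{\bm{\lambda}}L(\bm{\lambda}^*;\bm{\hat u}^*)=\bm{0}$. Differentiating $\bm{d}=\bm{\lambda}-\bm{H}(\bm{\lambda})^{-1}\nabla_{\bm{\lambda}}L$ with respect to $\bm{\lambda}$, the term in which $\bm{H}^{-1}$ is differentiated carries a factor $\nabla_{\bm{\lambda}}L$ that vanishes at the fixed point, leaving $\partial_{\bm{\lambda}}\bm{d}|_* = \bm{I}-\bm{H}^{-1}\partial_{\bm{\lambda}}\nabla_{\bm{\lambda}}L = \bm{I}-\bm{H}^{-1}\bm{H}=\bm{0}$; this is precisely the standard vanishing-derivative property of Newton's map at a root. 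Similarly, reading off $\partial_{\bm{\hat u}}\nabla_{\bm{\lambda}}L=-\bm{I}$ from \eqref{eq:L} gives $\partial_{\bm{\hat u}}\bm{d}|_* = \bm{H}^{-1}$.

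Substituting $\partial_{\bm{\lambda}}\bm{d}|_*=\bm{0}$ annihilates the entire first block-column of $\bm{J}$, so $\bm{J}$ becomes block upper triangular,
\[
\bm{J} = \begin{pmatrix} \bm{0} & \bm{H}^{-1} \\ \bm{0} & \partial_{\bm{\lambda}}\bm{c}\,\bm{H}^{-1} \end{pmatrix},
\]
whence $\rho(\bm{J})=\rho(\partial_{\bm{\lambda}}\bm{c}\,\bm{H}^{-1})$, the off-diagonal block being irrelevant for the spectrum. The final step is to identify $\partial_{\bm{\lambda}}\bm{c}\,\bm{H}^{-1}$ with the Jacobian of the classical IPM iteration $\bm{\hat u}\mapsto \bm{c}(\bm{\hat\lambda}(\bm{\hat u}))$. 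By the chain rule this Jacobian equals $\partial_{\bm{\lambda}}\bm{c}\,\partial_{\bm{\hat u}}\bm{\hat\lambda}$, and differentiating the defining relation $\nabla_{\bm{\lambda}}L(\bm{\hat\lambda}(\bm{\hat u});\bm{\hat u})=\bm{0}$ by the implicit function theorem yields $\partial_{\bm{\hat u}}\bm{\hat\lambda}=\bm{H}^{-1}$. Thus the two Jacobians coincide at the fixed point, and the hypothesis that the classical IPM iteration is contractive, $\rho(\partial_{\bm{\lambda}}\bm{c}\,\bm{H}^{-1})<1$, immediately gives $\rho(\bm{J})<1$.

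I expect the main obstacle to be bookkeeping rather than conceptual: one must track which derivatives are block-diagonal in the cell index (the dual update $\bm{d}$ decouples over cells) versus which couple neighboring cells through the stencil in $\bm{c}$, and verify that the $(2,1)$ block $\partial_{\bm{\lambda}}\bm{c}\,\partial_{\bm{\lambda}}\bm{d}$ genuinely vanishes despite this coupling — which it does, since $\partial_{\bm{\lambda}}\bm{d}|_*=\bm{0}$ as a full block irrespective of the stencil. A secondary point requiring care is the invertibility of $\bm{H}(\bm{\lambda}^*)$, which follows from strict convexity of the entropy (equivalently, a sufficiently regular quadrature Hessian) and makes both $\partial_{\bm{\hat u}}\bm{d}$ and the implicit-function-theorem step well defined.
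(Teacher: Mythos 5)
Your proposal is correct and follows essentially the same route as the paper's proof: you exploit the vanishing derivative of the Newton map at the root to make the Jacobian block upper triangular, compute $\partial_{\bm{\hat u}}\bm{d}=\bm{H}^{-1}$, and identify the remaining diagonal block with the Jacobian of the classical IPM iteration via $\partial_{\bm{\hat u}}\bm{\hat\lambda}=\bm{H}^{-1}$. The only cosmetic difference is that you obtain this last identity from the implicit function theorem applied to $\nabla_{\bm{\lambda}}L=\bm{0}$, whereas the paper differentiates the inverse of the map $\bm{\hat\lambda}\mapsto\langle\bm{u}_s(\bm{\hat\lambda}^T\bm{\varphi})\bm{\varphi}^T\rangle^T$; these are equivalent.
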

\begin{proof}
First, to understand what contraction of the classical IPM iteration implies, we rewrite the moment iteration \eqref{eq:momentIteration} of the classical IPM scheme: When defining the update function
\begin{align*}
\bm{\tilde c}\left(\bm{\hat{u}}_{\ell},\bm{\hat{u}}_{c},\bm{\hat{u}}_{r}\right):=\bm{c}\left(\bm{\hat{\lambda}}(\bm{\hat{u}}_{\ell}),\bm{\hat{\lambda}}(\bm{\hat{u}}_{c}),\bm{\hat{\lambda}}(\bm{\hat{u}}_{r})\right)
\end{align*}
we can rewrite the classical moment iteration as
\begin{align}\label{eq:shortIPMIt}
\bm{\hat u}_j^{n+1} = \bm{\tilde c}\left(\bm{\hat u}_{j-1}^n,\bm{\hat u}_{j}^n,\bm{\hat u}_{j+1}^n\right).
\end{align}
Since we assume that the classical IPM scheme is contractive at its fixed point, we have $\rho (\nabla_{\bm{\hat u}}\bm{\tilde c}(\bm{\hat u}^*))<1$ with $\nabla_{\bm{\hat u}}\bm{\tilde c}\in\mathbb{R}^{N\cdot N_x\times N\cdot N_x}$ defined by
\begin{align*}
\nabla_{\bm{\hat u}}\bm{\tilde c} = 
\begin{pmatrix} 
    \partial_{\bm{\hat u}_c}\bm{\tilde c}_{1} & \partial_{\bm{\hat u}_r}\bm{\tilde c}_{1}& 0 & 0 & \dots \\
    \partial_{\bm{\hat u}_{\ell}}\bm{\tilde c}_{2} & \partial_{\bm{\hat u}_c}\bm{\tilde c}_{2} & \partial_{\bm{\hat u}_r}\bm{\tilde c}_{2}& 0 & \dots \\
    0 & \partial_{\bm{\hat u}_{\ell}}\bm{\tilde c}_{3} & \partial_{\bm{\hat u}_c}\bm{\tilde c}_{3} & \partial_{\bm{\hat u}_r}\bm{\tilde c}_{3}\\
    \vdots & & & \ddots & \\
    0 &\cdots &  0 & \partial_{\bm{\hat u}_{\ell}}\bm{\tilde c}_{N_x} & \partial_{\bm{\hat u}_c}\bm{\tilde c}_{N_x}
    \end{pmatrix},
\end{align*}
where we define $\bm{\tilde c}_{j}:=\bm{\tilde c}\left(\bm{\hat u}_{j-1}^*,\bm{\hat u}_{j}^*,\bm{\hat u}_{j+1}^*\right)$ for all $j$. Now for each term inside the matrix $\nabla_{\bm{\hat u}}\bm{\tilde c}$ we have 
\begin{align}\label{eq:cTildeDer}
\partial_{\bm{\hat u}_{\ell}}\bm{\tilde c}_{j} = \frac{\partial \bm{c}_j}{\partial \bm{\hat \lambda}_{\ell}}\frac{\partial \bm{\hat \lambda}(\bm{\hat u}_{j-1}^*)}{\partial \bm{\hat u}},\enskip\partial_{\bm{\hat u}_c}\bm{\tilde c}_{j} = \frac{\partial \bm{c}_j}{\partial \bm{\hat \lambda}_c}\frac{\partial \bm{\hat \lambda}(\bm{\hat u}_j^*)}{\partial \bm{\hat u}},\enskip\partial_{\bm{\hat u}_r}\bm{\tilde c}_{j} = \frac{\partial \bm{c}_j}{\partial \bm{\hat \lambda}_r}\frac{\partial \bm{\hat \lambda}(\bm{\hat u}_{j+1}^*)}{\partial \bm{\hat u}}.
\end{align}
We first wish to understand the structure of the terms $\partial_{\bm{\hat u}} \bm{\hat \lambda}(\bm{\hat u})$. For this, we note that the exact dual variables fulfill
\begin{align}\label{eq:ulambda}
\bm{\hat u} = \langle \bm{u}_s(\bm{\hat \lambda}^T\bm{\varphi})\bm{\varphi}^T\rangle^T =: \bm{h}(\bm{\hat \lambda}),
\end{align}
which is why we have the mapping $\bm{\hat u}:\mathbb{R}^{N\times m}\to\mathbb{R}^{N\times m}$, $\bm{\hat u}(\bm{\hat \lambda}) = \bm{h}(\bm{\hat \lambda})$. Since the solution of the dual problem for a given moment vector is unique, this mapping is bijective and therefore we have an inverse function
\begin{align}\label{eq:lambdau}
\bm{\hat \lambda} = \bm{h}^{-1}(\bm{\hat u}(\bm{\hat \lambda})).
\end{align}
Now we differentiate both sides w.r.t. $\bm{\hat \lambda}$ to get
\begin{align*}
\bm{I}_{d} = \frac{\partial \bm{h}^{-1}(\bm{\hat u}(\bm{\hat \lambda}))}{\partial \bm{\hat u}}\frac{\partial \bm{\hat u}(\bm{\hat \lambda})}{\partial \bm{\hat \lambda}}.
\end{align*}
We multiply with the matrix inverse of $\frac{\partial \bm{\hat u}(\bm{\hat \lambda})}{\partial \bm{\hat \lambda}}$ to get
\begin{align*}
\left(\frac{\partial \bm{\hat u}(\bm{\hat \lambda})}{\partial \bm{\hat \lambda}}\right)^{-1} = \frac{\partial \bm{h}^{-1}(\bm{\hat u}(\bm{\hat \lambda}))}{\partial \bm{\hat u}}.
\end{align*}
Note that on the left-hand-side we have the inverse of a matrix and on the right-hand-side, we have the inverse of a multi-dimensional function. By rewriting $\bm{h}^{-1}(\bm{\hat u}(\bm{\hat \lambda}))$ as $\bm{\hat \lambda}(\bm{\hat u})$ and simply computing the term $\frac{\partial \bm{\hat u}(\bm{\hat \lambda})}{\partial \bm{\hat \lambda}}$ by differentiating \eqref{eq:ulambda} w.r.t. $\bm{\hat \lambda}$, one obtains
\begin{align}\label{eq:dudlambdaex}
\partial_{\bm{\hat u}} \bm{\hat \lambda}(\bm{\hat u}) = \langle \nabla\bm{u}_s(\bm{\hat \lambda}^T\bm{\varphi})\bm{\varphi}\bm{\varphi}^T\rangle^{-T}.
\end{align}
Now we begin to derive the spectrum of the \textit{One-Shot IPM} iteration \eqref{eq:oneshotIPM}. Note that in its current form this iteration is not really a fixed point iteration, since it uses the time updated dual variables in \eqref{eq:oneshotIPMmoment}. To obtain a fixed point iteration, we plug the dual iteration step \eqref{eq:oneshotIPMdual} into the moment iteration \eqref{eq:oneshotIPMmoment} to obtain
\begin{align*}
&\bm{\lambda}_j^{n+1} = \bm{d}(\bm{\lambda}_j^{n},\bm{\hat u}_j^{n}) \enskip \text{ for all j} \\
&\bm{\hat u}_j^{n+1} =  \bm{c}\left(\bm{d}(\bm{\lambda}_{j-1}^{n},\bm{\hat u}_{j-1}^{n}),\bm{d}(\bm{\lambda}_{j}^{n},\bm{\hat u}_{j}^{n}),\bm{d}(\bm{\lambda}_{j+1}^{n},\bm{\hat u}_{j+1}^{n})\right).
\end{align*}
The Jacobian $\bm{J}\in\mathbb{R}^{2N\cdot N_x \times 2N\cdot N_x}$ has the form
\begin{align}\label{eq:Jacobian}
\bm{J} = 
\begin{pmatrix}
 \partial_{\bm{\lambda}} \bm{d} & \partial_{\bm{\hat u}} \bm{d}  \\
\partial_{\bm{\lambda}} \bm{c} & \partial_{\bm{\hat u}} \bm{c}
\end{pmatrix},
\end{align}
where each block has entries for all spatial cells. We start by looking at $\partial_{\bm{\lambda}} \bm{d}$. For the columns belonging to cell $j$, we have
\begin{align*}
\partial_{\bm{\lambda}} \bm{d}(\bm{\lambda}_j^n,\bm{\hat u}_j^n) &= \bm{I}_d - \bm{H}(\bm\lambda)^{-1} \cdot \langle \nabla\bm{u}_s(\bm{\varphi}^T\bm{\lambda}_j^n)\bm{\varphi}\bm{\varphi}^T \rangle^T - \partial_{\bm{\lambda}}\bm{H}(\bm\lambda)^{-1} \cdot \left( \langle \bm{u}_s(\bm{\varphi}^T\bm{\lambda}_j^n)\bm{\varphi}^T \rangle^T - \bm{\hat u}\right) \\
&=- \partial_{\bm{\lambda}}\bm{H}(\bm\lambda)^{-1} \cdot \left( \langle \bm{u}_s(\bm{\varphi}^T\bm{\lambda}_j^n)\bm{\varphi}^T \rangle^T - \bm{\hat u}\right).
\end{align*}
Recall that at the fixed point $(\bm{\lambda}^*,\bm{\hat u}^*)$, we have $\langle \bm{u}_s(\bm{\varphi}^T\bm{\lambda}_j^n)\bm{\varphi}^T \rangle^T = \bm{\hat u}$, hence one obtains $\partial_{\bm{\lambda}} \bm{d}=\bm{0}$. For the block $\partial_{\bm{\hat u}} \bm{d}$, we get 
\begin{align*}
\partial_{\bm{\hat u}} \bm{d}(\bm{\lambda}_j^n,\bm{\hat u}_j^n) = \bm{H}(\bm\lambda)^{-1},
\end{align*}
hence $\partial_{\bm{\hat u}} \bm{d}$ is a block diagonal matrix. Let us now look at $\partial_{\bm{\lambda}} \bm{c}$ at a fixed spatial cell $j$:
\begin{align*}
\frac{\partial \bm{c}}{\partial \bm{\lambda}_{\ell}}\frac{\partial \bm{d}(\bm{\lambda}_{j-1}^{n},\bm{\hat u}_{j-1}^{n})}{\partial \bm{\lambda}} = \bm{0},
\end{align*}
since we already showed that by the choice of $\bm{H}(\bm\lambda)^{-1}$ the term $\partial_{\bm{\lambda}} \bm{d}$ is zero. We can show the same result for all spatial cells and all inputs of $\bm{c}$ analogously, hence $\partial_{\bm{\lambda}} \bm{c} = \bm{0}$. For the last block, we have that 
\begin{align*}
\frac{\partial \bm{c}}{\partial \bm{\lambda}_{\ell}}\frac{\partial \bm{d}(\bm{\lambda}_{j-1}^{n},\bm{\hat u}_{j-1}^{n})}{\partial \bm{\hat u}} = \frac{\partial \bm{c}}{\partial \bm{\lambda}_{\ell}} \bm{H}(\bm\lambda)^{-1} = \frac{\partial \bm{c}}{\partial \bm{\lambda}_{\ell}} \langle \nabla\bm{u}_s(\bm{\varphi}^T\bm{\lambda}_{j-1}^n)\bm{\varphi}\bm{\varphi}^T \rangle^{-T} = \partial_{\bm{\hat u}_{\ell}}\bm{\tilde c}_j
\end{align*}
by the choice of $\bm{H}(\bm\lambda)^{-1}$ as well as \eqref{eq:cTildeDer} and \eqref{eq:dudlambdaex}. We obtain an analogous result for the second and third input. Hence, we have that $\partial_{\bm{\hat u}} \bm{c} = \nabla_{\bm{\hat u}}\bm{\tilde c}$, which only has eigenvalues between $-1$ and $1$ by the assumption that the classical IPM iteration is contractive. Since $\bm{J}$ is an upper triangular block matrix, the eigenvalues are given by $\lambda\left(\partial_{\bm{\lambda}} \bm{d}\right) = 0$ and $\lambda\left(\partial_{\bm{\hat u}} \bm{c}\right)\in(-1,1)$, hence the One-Shot IPM is contractive around its fixed point.
\end{proof}
\begin{theorem}\label{th:localConvergence}
With the assumptions from Theorem \ref{th:Contractive}, the One-Shot IPM converges locally, i.e. there exists a $\delta>0$ s.t. for all starting points $(\bm{\lambda}^0,\bm{\hat u}^0)\in B_{\delta}(\bm{\lambda}^*,\bm{\hat u}^*)$ we have
\begin{align*}
\Vert (\bm{\lambda}^n,\bm{\hat u}^n) - (\bm{\lambda}^*,\bm{\hat u}^*)\Vert \rightarrow 0 \qquad \text{ for } n \rightarrow \infty.
\end{align*}
\end{theorem}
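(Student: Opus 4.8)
The plan is to invoke the standard local-convergence (point-of-attraction) theorem for fixed-point iterations, sometimes attributed to Ostrowski: if a map is differentiable near a fixed point and the spectral radius of its Jacobian there is strictly less than one, then the fixed point attracts all sufficiently close starting points, with linear rate. Theorem~\ref{th:Contractive} has already supplied the crucial hypothesis, namely $\rho(\bm{J})<1$, so the remaining work is to convert this spectral bound into an honest contraction estimate in a suitable norm.

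First I would write the One-Shot IPM iteration as a single fixed-point map. Collecting the unknowns into $\bm{z}:=(\bm{\lambda},\bm{\hat u})\in\mathbb{R}^{2N\cdot N_x}$, the coupled iteration derived at the end of the proof of Theorem~\ref{th:Contractive} takes the form $\bm{z}^{n+1}=\bm{\Phi}(\bm{z}^n)$, where $\bm{\Phi}$ composes the dual step $\bm{d}$ with the moment step $\bm{c}$, its fixed point is $\bm{z}^*=(\bm{\lambda}^*,\bm{\hat u}^*)$, and its Jacobian at that point is exactly $\bm{\Phi}'(\bm{z}^*)=\bm{J}$. Since $\bm{u}_s=(\nabla_{\bm u}s)^{-1}$ is smooth and the Hessian $\bm{H}$ is nonsingular in a neighborhood of the fixed point, $\bm{\Phi}$ is continuously differentiable there.

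The key step is the adapted-norm argument. Because $\rho(\bm{J})<1$, I would fix $q$ with $\rho(\bm{J})<q<1$ and invoke the standard fact that for any matrix and any $\varepsilon>0$ there exists an induced matrix norm $\Vert\cdot\Vert_*$ with $\Vert\bm{J}\Vert_*\leq\rho(\bm{J})+\varepsilon$; choosing $\varepsilon=q-\rho(\bm{J})$ yields $\Vert\bm{J}\Vert_*\leq q<1$. By continuity of $\bm{\Phi}'$ there is a radius $\delta>0$ such that $\Vert\bm{\Phi}'(\bm{z})\Vert_*\leq q$ for all $\bm{z}\in B_\delta(\bm{z}^*)$. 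The mean value inequality then shows $\bm{\Phi}$ is a $q$-contraction on $B_\delta$ in the norm $\Vert\cdot\Vert_*$, and in particular, using $\bm{\Phi}(\bm{z}^*)=\bm{z}^*$,
\begin{align*}
\Vert\bm{z}^{n+1}-\bm{z}^*\Vert_*=\Vert\bm{\Phi}(\bm{z}^n)-\bm{\Phi}(\bm{z}^*)\Vert_*\leq q\,\Vert\bm{z}^n-\bm{z}^*\Vert_*
\end{align*}
whenever $\bm{z}^n\in B_\delta(\bm{z}^*)$. This estimate keeps the iterates inside $B_\delta$, so by induction $\Vert\bm{z}^n-\bm{z}^*\Vert_*\leq q^n\Vert\bm{z}^0-\bm{z}^*\Vert_*\to0$; equivalence of norms on the finite-dimensional space then transfers the convergence to the Euclidean norm used in the statement, completing the argument.

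The main obstacle is precisely the adapted-norm step: the spectral radius is not itself a norm, and the Euclidean operator norm of $\bm{J}$ may well exceed one even though $\rho(\bm{J})<1$, so one genuinely needs the existence of an induced norm arbitrarily close to the spectral radius (obtained by reducing $\bm{J}$ to Jordan form and rescaling the basis). The only other point requiring care is verifying that $\bm{\Phi}$ is truly $C^1$ near the fixed point, which hinges on the nonsingularity of the Hessian $\bm{H}$ ensuring that the dual step $\bm{d}$ is well-defined and smooth there; everything after the contraction estimate is routine.
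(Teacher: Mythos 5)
Your proof is correct, and it takes a genuinely different --- and in fact more complete --- route than the paper's. The paper also starts from $\rho(\bm J)<1$ at the fixed point, but then argues via the characteristic polynomial $\det(\bm J-\lambda\bm I_d)$ that the eigenvalues of $\bm J$ depend continuously on $(\bm\lambda,\bm{\hat u})$, concludes that the spectral radius stays below one on some ball around $(\bm\lambda^*,\bm{\hat u}^*)$, and stops there. That leaves a logical step unaddressed: having $\rho(\bm J(\bm z))<1$ pointwise on a neighborhood does not by itself yield convergence of the iteration, because the spectral radius is not a norm and products of distinct matrices, each with spectral radius below one, need not tend to zero. Your adapted-norm argument is exactly what closes this gap: you fix a single induced norm $\Vert\cdot\Vert_*$ with $\Vert\bm J(\bm z^*)\Vert_*\le q<1$, use continuity of the Jacobian in that fixed norm to extend the bound to a ball, and then apply the mean value inequality to obtain an honest contraction, after which convergence in the Euclidean norm follows by norm equivalence. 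The paper's route buys a shorter exposition that stays entirely with eigenvalues and avoids introducing a non-Euclidean norm; yours buys a rigorous derivation of the convergence claim from the spectral hypothesis, in the spirit of Ostrowski's point-of-attraction theorem. Your additional remark that the smoothness of the composite map hinges on the nonsingularity of the Hessian $\bm H$ near the fixed point is also a point the paper treats only implicitly.
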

\begin{proof}
By Theorem \ref{th:Contractive}, the One-Shot scheme is contractive at its fixed point. Since we assumed convergence of the classical IPM scheme, we can conclude that all entries in the Jacobian $\bm{J}$ are continuous functions. Furthermore, the determinant of $\bm{\tilde{J}}:=\bm{J}-\lambda \bm{I}_d$ is a polynomial of continuous functions, since
\begin{align*}
\text{det}(\bm{\tilde J}) = \sum_{\sigma} \text{sgn}(\sigma)\prod_{i = 1}^{2 N_x N} \tilde J_{\sigma(i),i}.
\end{align*}
Since the roots of a polynomial vary continuously with its coefficients, the eigenvalues of $\bm{J}$ are continuous w.r.t $(\bm{\lambda},\bm{\hat u})$. Hence there exists an open ball with radius $\delta$ around the fixed point in which the eigenvalues remain in the interval $(-1,1)$.
\end{proof}
\section{Adaptivity}
\label{sec:adaptivity}

The following section presents the adaptivity strategy used in this work. Since stochastic hyperbolic problems generally experience shocks in a small portion of the space-time domain, the idea is to perform arising computations on a high accuracy level in this small area, while keeping a low level of accuracy in the remainder. The idea is to automatically select the lowest order moment capable of approximating the solution with given accuracy, i.e. the same error is obtained while using a significantly reduced number of unknowns in most parts of the computational domain and thus boost the performance of intrusive methods.

In the following, we discuss the building blocks of the IPM method for refinement levels $\ell = 1,\cdots,N_{\text{ad}}$, where level $1$ uses the coarsest discretization and level $N_{\text{ad}}$ uses the finest discretization of the uncertain domain. At a given refinement level $\ell$, the total degree of the basis function is given by $M_{\ell}$ with a corresponding number of moments $N_{\ell}$. The number of quadrature points at level $\ell$ is denoted by $Q_{\ell}$. To determine the refinement level of a given moment vector $\bm{\hat u}$ we choose techniques used in discontinuous Galerkin (DG) methods. Adaptivity is a common strategy to accelerate this class of methods and several indicators to determine the smoothness of the solution exist. Translating the idea of the so-called discontinuity sensor which has been defined in \cite{persson2006sub} to uncertainty quantification, we define the polynomial approximation at refinement level $\ell$ as
\begin{align*}
\bm{\tilde u}_{\ell} := \sum_{|i|\leq M_{\ell}} \bm{\hat{u}}_i \varphi_i.
\end{align*}
Now the indicator for a moment vector at level $\ell$ is defined as
\begin{align}\label{eq:errorIndicator}
\bm S_{\ell} := \frac{\langle \left(\bm{\tilde u}_{\ell} - \bm{\tilde u}_{\ell-1}\right)^2\rangle}{\langle \bm{\tilde u}_{\ell}^2\rangle},
\end{align}
where divisions and multiplications are performed element-wise. Note that a similar indicator has been used in \cite{kroker2012finite} for intrusive methods in uncertainty quantification. In this work, we use the first entry in $\bm S_{\ell}$ to determine the refinement level, i.e. in the case of gas dynamics, the regularity of the density is chosen to indicate an adequate refinement level. If the moment vector in a given cell and at a certain timestep is initially at refinement level $\ell$, this level is kept if the error indicator \eqref{eq:errorIndicator} lies in the interval $I_{\delta}:=[\delta_{-},\delta_{+}]$. Here $\delta_{\pm}$ are user determined parameters. If the indicator is smaller than $\delta_-$, the refinement level is decreased to the next lower level, if it lies above $\delta_+$, it is increased to the next higher level.

Now we need to specify how the different building blocks of IPM can be modified to work with varying truncation orders in different cells. Let us first add dimensions to the notation of the dual iteration function $\bm d$, which has been defined in \eqref{eq:dualIterationFunction}. Now, we have 
$\bm{d}_{\ell}:\mathbb{R}^{N_{\ell}\times m}\times\mathbb{R}^{N_{\ell}\times m}\to\mathbb{R}^{N_{\ell}\times m}$, given by
\begin{align}\label{eq:dualIterationFunctionAd}
\bm{d}_{\ell}(\bm{\lambda},\bm{\hat{u}}):= \bm{\lambda}-\bm{H}_{\ell}^{-1}(\bm{\lambda})\cdot \left(\langle \bm u_{s}(\bm{\lambda}^T\bm{\varphi}_{\ell})\bm{\varphi}_{\ell}^T\rangle_{Q_{\ell}}^T-\bm{\hat{u}}\right),
\end{align}
where $\bm{\varphi}_{\ell}\in\mathbb{R}^{N_{\ell}}$ collects all basis functions with total degree smaller or equal to $M_\ell$. The Hessian $\bm{H}_{\ell}$ is given by 
\begin{align*}
\bm{H}_{\ell}(\bm{\lambda}) := \langle \nabla \bm{u}_{s} (\bm{\lambda}^T\bm{\varphi}_{\ell})\otimes\bm{\varphi}_{\ell}\bm{\varphi}_{\ell}^T\rangle_{Q_{\ell}}^{T}.
\end{align*}
An adaptive version of the moment iteration \eqref{eq:momentIterationFunction} is denoted by $\bm c_{\ell}^{\bm{\ell}'}:\mathbb{R}^{N_{\ell_1'}\times m}\times \mathbb{R}^{N_{\ell_2'}\times m}\times \mathbb{R}^{N_{\ell_3'}\times m}\rightarrow \mathbb{R}^{N_{\ell}\times m}$ and given by
\begin{align}\label{eq:adaptiveFVUpdate}
\bm{c}_{\ell}^{\bm{\ell}'}\left(\bm{\lambda}_{1},\bm{\lambda}_2,\bm{\lambda}_3\right):= &\langle \bm u_{s}(\bm{\lambda}_2^T\bm{\varphi}_{\ell_2'})\bm{\varphi}_{\ell}^T\rangle_{Q_{\ell}}^T \\&- \frac{\Delta t}{\Delta x}\left(\langle \bm g(\bm u_{s}(\bm{\lambda}_2^T\bm{\varphi}_{\ell_2'}),\bm u_{s}(\bm{\lambda}_3^T\bm{\varphi}_{\ell_3'}))\bm{\varphi}_{\ell}^T\rangle_{Q_{\ell}}^T-\langle \bm g(\bm u_{s}(\bm{\lambda}_{1}^T\bm{\varphi}_{\ell_1'}),\bm u_{s}(\bm{\lambda}_2^T\bm{\varphi}_{\ell_2'}))\bm{\varphi}_{\ell}^T\rangle_{Q_{\ell}}^T\right). \nonumber
\end{align}
Hence, the index vector $\bm\ell'\in\mathbb{N}^{3}$ denotes the refinement levels of the stencil cells, which are used to compute the time updated moment vector at level $\ell$.

The strategy now is to perform the dual update for a set of moment vectors $\bm{\hat u}_j^n$ at refinement levels $\ell_j^n$ for $j = 1,\cdots,N_x$. Thus, the dual iteration makes use of the iteration function \eqref{eq:dualIterationFunctionAd} at refinement level $\ell_j^n$. After that, the refinement level at the next time step $\ell_j^{n+1}$ is determined by making use of the smoothness indicator \eqref{eq:errorIndicator}. The moment update then computes the moments at the time updated refinement level $\ell_j^{n+1}$, utilizing the the dual states at the old refinement levels $\bm{\ell}' = (\ell_{j-1}^n,\ell_{j}^n,\ell_{j+1}^n)^T$. 

Note that we use nested quadrature rules, which facilitate the task of evaluating the quadrature in the moment update \eqref{eq:adaptiveFVUpdate}. Assume that we want to compute the moment update in cell $j$ with refinement level $\ell_j$ where a neighboring cell $j-1$ has refinement level $\ell_{j-1}$. Now if $\ell_{j-1}\geq\ell_j$, the solution of cell $j-1$ is known at all $Q_{\ell}$ quadrature points, hence the integral inside the moment update can be computed. Vice versa, if $\ell_{j-1}\leq\ell_j$, we need to evaluate the neighboring cell at the finer quadrature level $\ell_j$. Except from this, increasing or decreasing the refinement level does not lead to additional costs.

The IPM algorithm with adaptivity results in Algorithm \ref{alg:ad-IPM}.
\begin{algorithm}[H]
\begin{algorithmic}[1]
\For{$j=0$ to $N_x+1$}
\State $\ell_j^0 \leftarrow$ choose initial refinement level
\State $\bm{u}_j^0 \leftarrow \frac{1}{\Delta x} \int_{x_{j-1/ 2}}^{x_{j+1/ 2}} \langle u_{\text{IC}}(x, \cdot) \bm{\varphi}_{\ell_j^0} \rangle_{Q_{\ell_j^0}} dx$
\EndFor
\For{$n=0$ to $N_t$}
\For{$j=0$ to $N_x+1$}
\State $\bm{\lambda}_j^{(0)} \leftarrow \bm{\hat \lambda}_j^{n}$
\While{\eqref{eq:tauCrit} is violated}
\State $\bm{\lambda}_j^{(l+1)} \leftarrow \bm{d}_{\ell_j^n}(\bm{\lambda}_{j}^{(l)};\bm{\hat u}_j^{n})$
\State $l \leftarrow l+1$
\EndWhile
\State $\bm{\hat \lambda}_j^{n+1} \leftarrow \bm{\lambda}_j^{(l)}$
\State $\ell_j^{n+1}\leftarrow \text{DetermineRefinementLevel}\left(\bm{\hat \lambda}_j^{n+1}\right)$
\EndFor
\For{$j=1$ to $N_x$}
\State $\bm\ell' \leftarrow (\ell_{j-1}^n,\ell_{j}^n,\ell_{j+1}^n)^T$
\State $\bm{\hat u}_j^{n+1} \leftarrow \bm{c}_{\ell_j^{n+1}}^{\bm\ell'}(\bm{\hat \lambda}_{j-1}^{n+1},\bm{\hat \lambda}_j^{n+1},\bm{\hat \lambda}_{j+1}^{n+1})$
\EndFor
\EndFor
\end{algorithmic}
\caption{Adaptive IPM implementation}
\label{alg:ad-IPM}
\end{algorithm}
Adaptivity can be used for intrusive methods in general as well as for steady and unsteady problems. In the case of steady problems, we can make use of a strategy, which we call \textit{refinement retardation}. Recall that the convergence to an admissible steady state solution is expensive and a high accuracy and desirable solution properties are only required at the end of this iteration process. Hence, we propose to iteratively increase the maximal refinement level whenever the residual \eqref{eq:residualSteady} lies below a certain tolerance $\varepsilon$. For a given set of maximal refinement levels $\ell_l^*$ and a set of tolerances $\varepsilon_l^*$ at which the refinement level must be increased, we can now perform a large amount of the required iterations on a lower, but cheaper refinement level.
The same strategy can be applied for One-Shot IPM. In this case, the algorithm is given by Algorithm~\ref{alg:adosIPM}.
\begin{algorithm}[H]
\begin{algorithmic}[1]
\For{$j=0$ to $N_x+1$}
\State $\bm{u}_j^0 \leftarrow \frac{1}{\Delta x} \int_{x_{j-1/ 2}}^{x_{j+1/ 2}} \langle u_{\text{IC}}(x, \cdot) \bm{\varphi} \rangle_Q dx$
\EndFor
\While{\eqref{eq:residualSteady} is violated}
\For{$j=1$ to $N_x$}
\State $\bm{\lambda}_j^{n+1} \leftarrow \bm{d}_{\ell_j^n}(\bm{\lambda}_{j}^{n};\bm{\hat u}_j^{n})$
\State $\ell_j^{n+1}\leftarrow \max\{\text{DetermineRefinementLevel}\left(\bm{\lambda}_j^{n+1}\right),\ell_l^*\}$
\EndFor
\For{$j=1$ to $N_x$}
\State $\bm\ell' \leftarrow (\ell_{j-1}^n,\ell_{j}^n,\ell_{j+1}^n)^T$
\State $\bm{\hat u}_j^{n+1} \leftarrow \bm{c}_{\ell_j^{n+1}}^{\bm\ell'}(\bm{\lambda}_{j-1}^{n+1},\bm{\lambda}_j^{n+1},\bm{\lambda}_{j+1}^{n+1})$
\EndFor
\State $n \leftarrow n+1$
\If{the residual \eqref{eq:residualSteady} lies below $\varepsilon_l^*$}
\State $l \leftarrow l+1$
\EndIf
\EndWhile
\end{algorithmic}
\caption{Adaptive One-Shot IPM implementation with refinement retardation}
\label{alg:adosIPM}
\end{algorithm}

\section{Parallelization and Implementation}
\label{sec:parallel}
It remains to discuss the parallelization of the presented algorithms. In order to minimize the parallelization overhead, our goal is to minimize the communication between processors. Note that the dual problem (line 8 in Algorithm~\ref{alg:ad-IPM} and line 5 in Algorithm~\ref{alg:adosIPM}) does not require communication, i.e. it suffices to distribute the spatial cells between processors. In contrast to that, the finite volume update (line 14 in Algorithm~\ref{alg:ad-IPM} and line 8 in Algorithm~\ref{alg:adosIPM}) requires communication, since values at neighboring cells need to be evaluated. Hence, distributing the spatial mesh between processors will yield communication overhead since data needs to be sent whenever a stencil cell lies on a different processor. Therefore, we choose to parallelize the quadrature points, which minimizes the computational time spend on communication. As mentioned in \ref{app:costNumFlux}, we first compute the solution at stencil cells for all quadrature points. I.e. we determine $\bm u^{(j)}_k\in\mathbb{R}^m$ and the corresponding stencil cells for $k = 1,\dots,Q$ by
\begin{align*}
\bm u^{(j-1)}_k := \bm u_{s}(\bm{\lambda}_{j-1}^T\bm{\varphi}_{\ell_1'}(\bm \xi_k)), \enskip \bm u^{(j)}_k := \bm u_{s}(\bm{\lambda}_{j}^T\bm{\varphi}_{\ell_2'}(\bm \xi_k)), \enskip \bm u^{(j+1)}_k := \bm u_{s}(\bm{\lambda}_{j+1}^T\bm{\varphi}_{\ell_3'}(\bm \xi_k)).
\end{align*}
Thus, the finite volume update function \eqref{eq:adaptiveFVUpdate} can be written as
\begin{align}\label{eq:momentUpQuadrature}
\bm{c}_{\ell}^{\bm{\ell}'}&\left(\bm{\lambda}_{1},\bm{\lambda}_2,\bm{\lambda}_3\right)=\sum_{k=1}^Q w_k \left[\bm u^{(j)}_k- \frac{\Delta t}{\Delta x}\left(\bm g( \bm u^{(j)}_k,\bm u^{(j+1)}_k )- \bm g( \bm u^{(j-1)}_k,\bm u^{(j)}_k )\right)\right]\bm{\varphi}_{\ell}(\bm \xi_k)^T.
\end{align}
Instead of distributing the spatial mesh on the different processors, we now distribute the quadrature set, i.e. the sum in \eqref{eq:momentUpQuadrature} can be computed in parallel. Now, after having performed the dual update, the dual variables are send to all processors. With these variables, each processor computes the solution on its portion of the quadrature set and then computes its part of the sum in \eqref{eq:momentUpQuadrature} on all spacial cells. All parts from the different processors are then added together and the full time-updated moments are distributed to all processors. From here, the dual update can again be performed. The standard IPM Algorithm~\ref{alg:IPM} and One-Shot IPM Algorithm~\ref{alg:osIPM} use this parallelization strategy accordingly. Again, we point out that stochastic-Galerkin is a variant of IPM, i.e. all presented techniques for IPM can also be used for SG. The SC algorithm that we use to compare intrusive with non-intrusive methods uses a given deterministic solver as a black box. Here, we distribute the quadrature set between all processors. Note that both, SC and IPM are based on the same deterministic solver, i.e. we use the same deterministic numerical flux $\bm g$. To our best knowledge, this allows a fair comparison of the different intrusive and non-intrusive techniques. In the following section, we will study the convergence of the expectation and variance error in pseudo-time. Recording this error is straight forward with intrusive methods, however non-intrusive methods only yield expectation value and variance at the final, steady state solution. Therefore, to record the error for SC, we have implemented a collocation code, which couples all quadrature points in each time step, allowing the computation of the error in pseudo-time. Since this adds additional communication costs, we do not use the run time of this method, but instead make use of the run times from the black-box SC code. Thereby, we are able to record the convergence of expectation values and variances in pseudo-time for the non-intrusive SC method without including additional communication costs.

\section{Results}
\label{sec:results}

\subsection{2D Euler equations with a one dimensional uncertainty}
\label{sec:resultsNACA1D}
We start by quantifying the effects of an uncertain angle of attack $\phi\sim U(0.75,1.75)$ for a NACA0012 airfoil computed with different methods. The stochastic Euler equations in two dimensions are given by
\begin{align*}
\partial_t
\begin{pmatrix}
\rho \\ \rho v_1 \\ \rho v_2 \\ \rho e
\end{pmatrix}
+\partial_{x_1}
\begin{pmatrix}
\rho v_1 \\ \rho v_1^2 +p \\ \rho v_1 v_2 \\  v_1 (\rho e+p)
\end{pmatrix}
+\partial_{x_2}
\begin{pmatrix}
\rho v_2 \\ \rho v_1 v_2 \\ \rho v_2^2+p \\ v_2 (\rho e+p)
\end{pmatrix}
=\bm{0}.
\end{align*}
These equations determine the time evolution of the conserved variables $(\rho,\rho \bm v, \rho e)$, i.e. density, momentum and energy. A closure for the pressure $p$ is given by
\begin{align*}
p = (\gamma-1)\rho\left(e-\frac12(v_1^2+v_2^2)\right).
\end{align*}
Here, the heat capacity ratio $\gamma$ is chosen to be $1.4$. The spatial mesh discretizes the flow domain around the airfoil. At the airfoil boundary, we use the Euler slip condition $\bm v^T\bm n = 0$, where $\bm n$ denotes the surface normal. At a sufficiently large distance away from the airfoil, we assume a far field flow with a given Mach number $Ma = 0.8$, pressure $p = 101\;325$ Pa and a temperature of $273.15$ K. Now the angle of attack $\phi$ is uniformly distributed in the interval of $[0.75,1.75]$ degrees, i.e. we choose $\phi(\xi) = 1.25 + 0.5\xi$ where $\xi\sim U(-1,1)$. As commonly done, the initial condition is equal to the far field boundary values. Consequently, the wall condition at the airfoil is violated initially and will correct the flow solution. 

The computational domain is a circle with a diameter of $40$ meters. In the center, the NACA0012 airfoil with a length of one meter is located. The spatial mesh is composed of a coarsely discretized far field and a finely resolved region around the airfoil, since we are interested in the flow solution at the airfoil. Altogether, the mesh consists of 22361 triangular elements.

\begin{figure}[h!]
\centering
	\begin{subfigure}{0.329\linewidth}
		\centering
				\includegraphics[width=\linewidth]{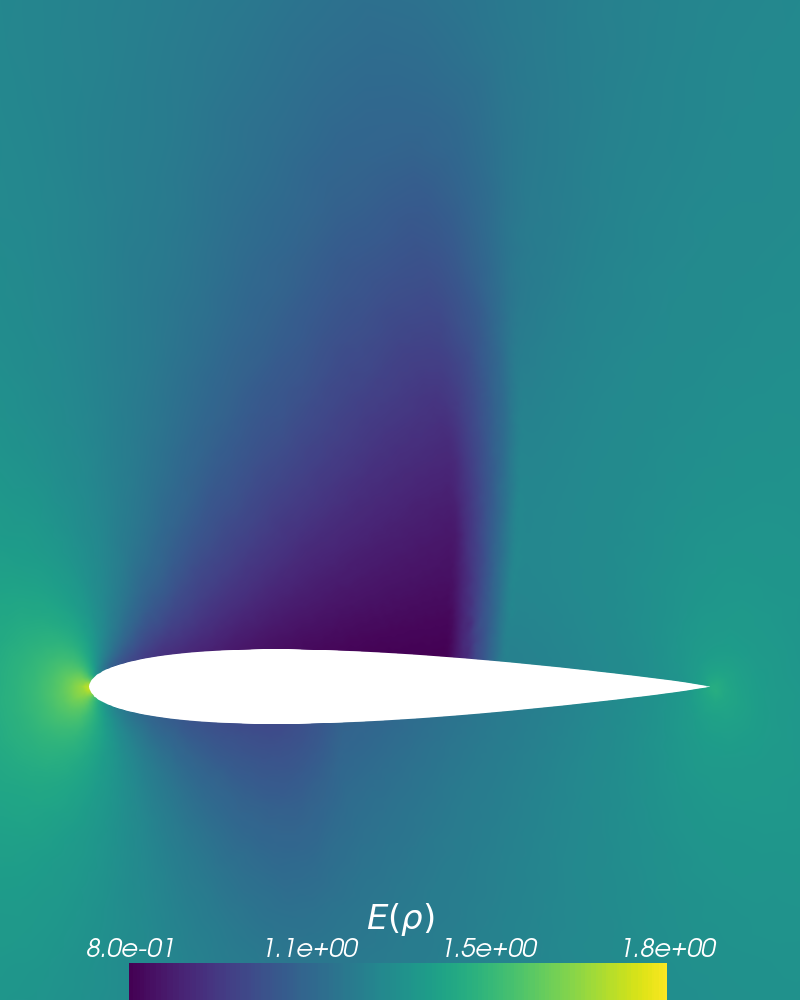}
		\label{fig:referenceSolutionsub1}
	\end{subfigure}
	\hfill
	\begin{subfigure}{0.329\linewidth}
		\centering
				\includegraphics[width=\linewidth]{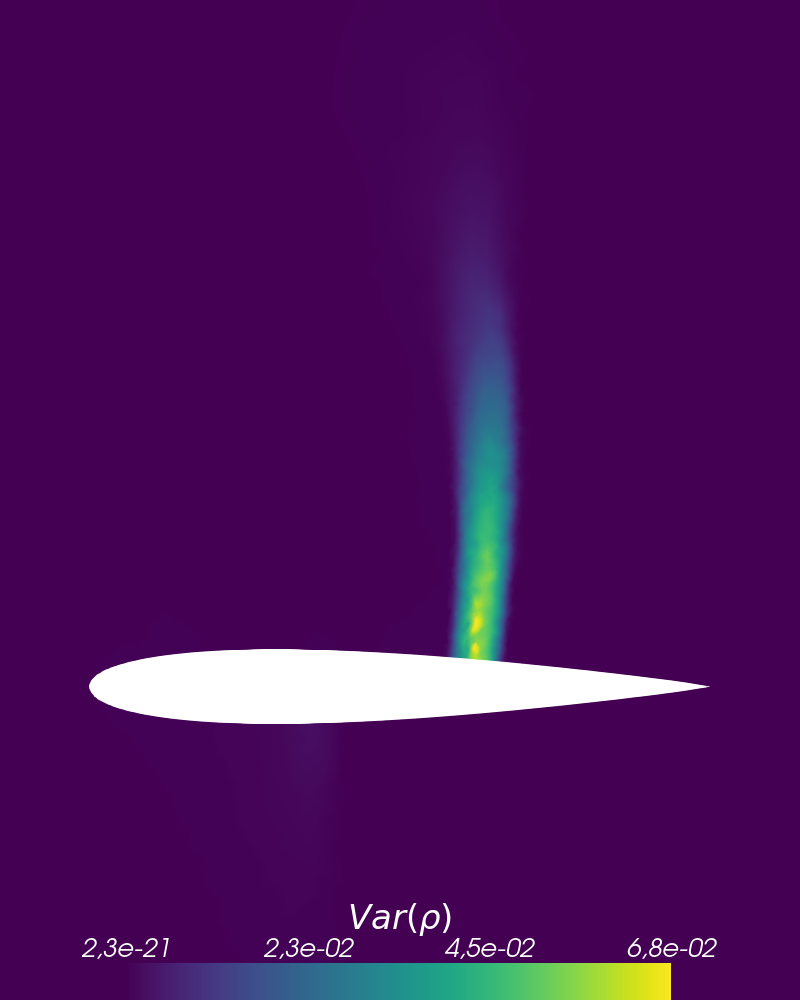}
		\label{fig:referenceSolutionsub2}
	\end{subfigure}
	\hfill
	\begin{subfigure}{0.329\linewidth}
		\centering
				\includegraphics[width=\linewidth]{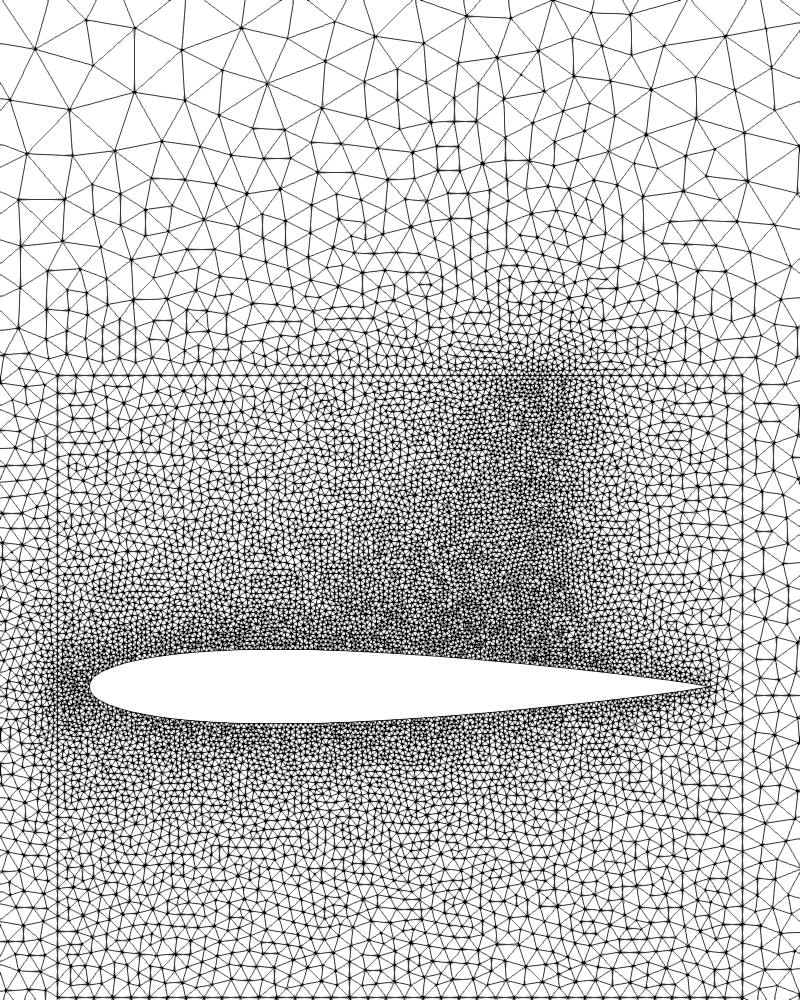}
		\label{fig:referenceSolutionsMesh}
	\end{subfigure}
	\caption{Reference solution E$[\rho]$ and Var$[\rho]$ and the mesh close to the airfoil which is used in the computation of all presented methods.}
	\label{fig:referenceSolution}
\end{figure}

The aim is to quantify the effects arising from the one-dimensional uncertainty $\xi$ and to investigate its effects on the solution with different methods. To be able to measure the quality of the obtained solutions, we compute a reference solution using stochastic-Collocation with $100$ Gauss-Legendre quadrature points, which can be found in Figure~\ref{fig:referenceSolution}. In the following, we investigate the L$^2$-error of the variance and the expectation value. The L$^2$-error of the discrete quantity $\bm e_{\Delta}=(\bm e_1,\cdots,\bm e_{N_x})^T$, where $\bm e_j$ is the cell average of the quantity $\bm e$ in spatial cell $j$, is denoted by
\begin{align*}
\Vert \bm e_{\Delta} \Vert_{\Delta} := \sqrt{\sum_{j=1}^{N_x} \Delta x_j \bm e_j^2}\;.
\end{align*}
Hence, when denoting the reference solution by $\bm u_{\Delta}$ and the moments obtained with the numerical method by $\bm{\hat u}_{\Delta}$, we investigate the relative error
\begin{align*}
\frac{\Vert \text{E}[\bm u_{\Delta}] - \text{E}[\mathcal{U}(\bm{\hat u}_{\Delta})] \Vert_{\Delta}}{\Vert \text{E}[\bm u_{\Delta}] \Vert_{\Delta}} \qquad \text{ and }\qquad \frac{\Vert \text{Var}[\bm u_{\Delta}] - \text{Var}[\mathcal{U}(\bm{\hat u}_{\Delta})] \Vert_{\Delta}}{\Vert \text{Var}[\bm u_{\Delta}] \Vert_{\Delta}}.
\end{align*}
The error is computed inside a box of one meter height and 1.1 meters length around the airfoil to prevent small fluctuations in the coarsely discretized far field from affecting the error.

The quantities of interests are now computed with the different methods. All methods in this section have been computed using five MPI threads. For more information on the chosen entropy and the resulting solution ansatz for IPM, see \ref{app:IPM2DEuler}. 

\begin{figure}[h!]
\centering
\centering
	\begin{subfigure}{0.5\linewidth}
		\centering
				\includegraphics[width=\linewidth]{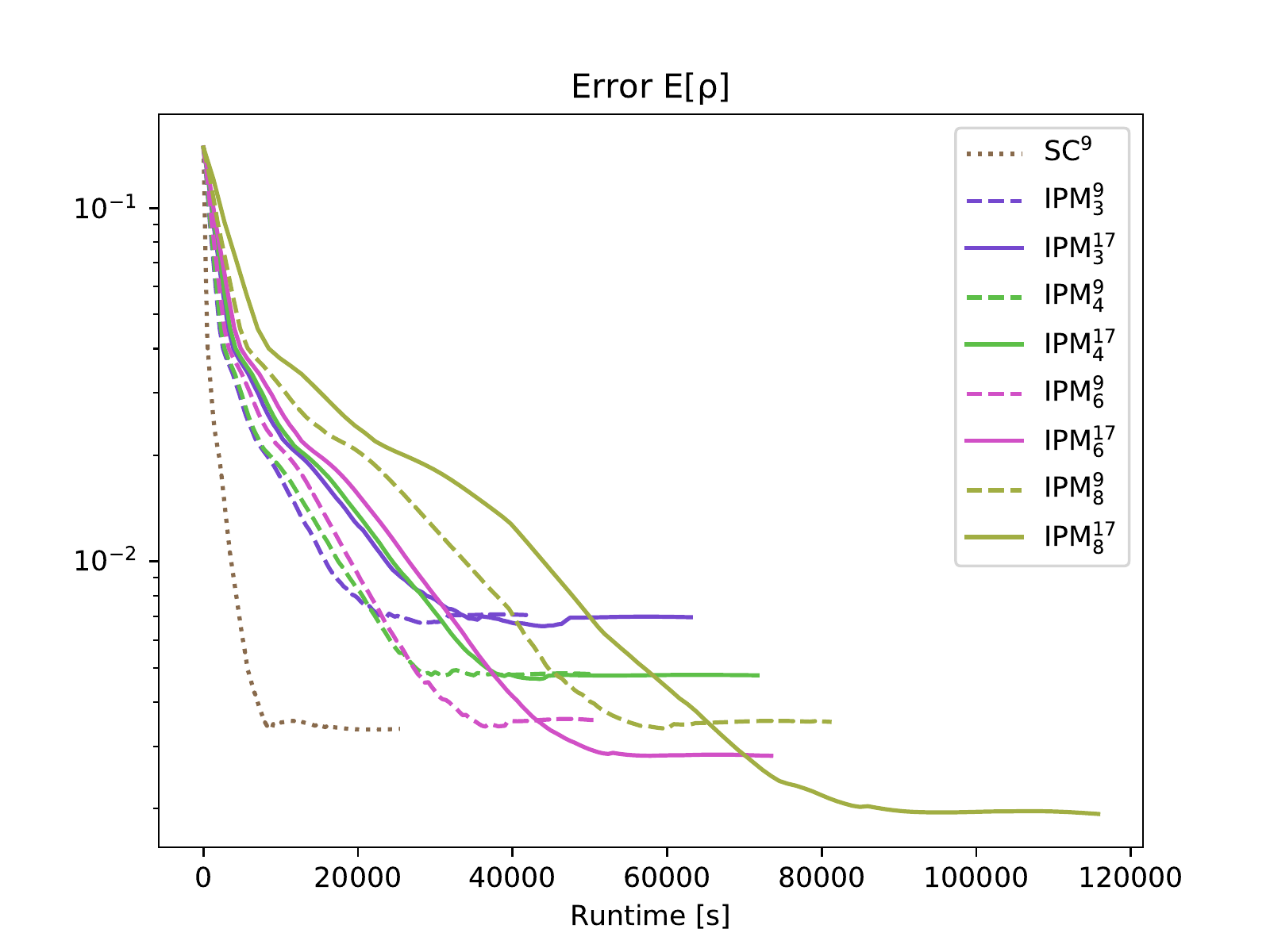}
		\caption{}
		\label{fig:ErrorDifferentQuadA}
	\end{subfigure}%
	\begin{subfigure}{0.5\linewidth}
		\centering
				\includegraphics[width=\linewidth]{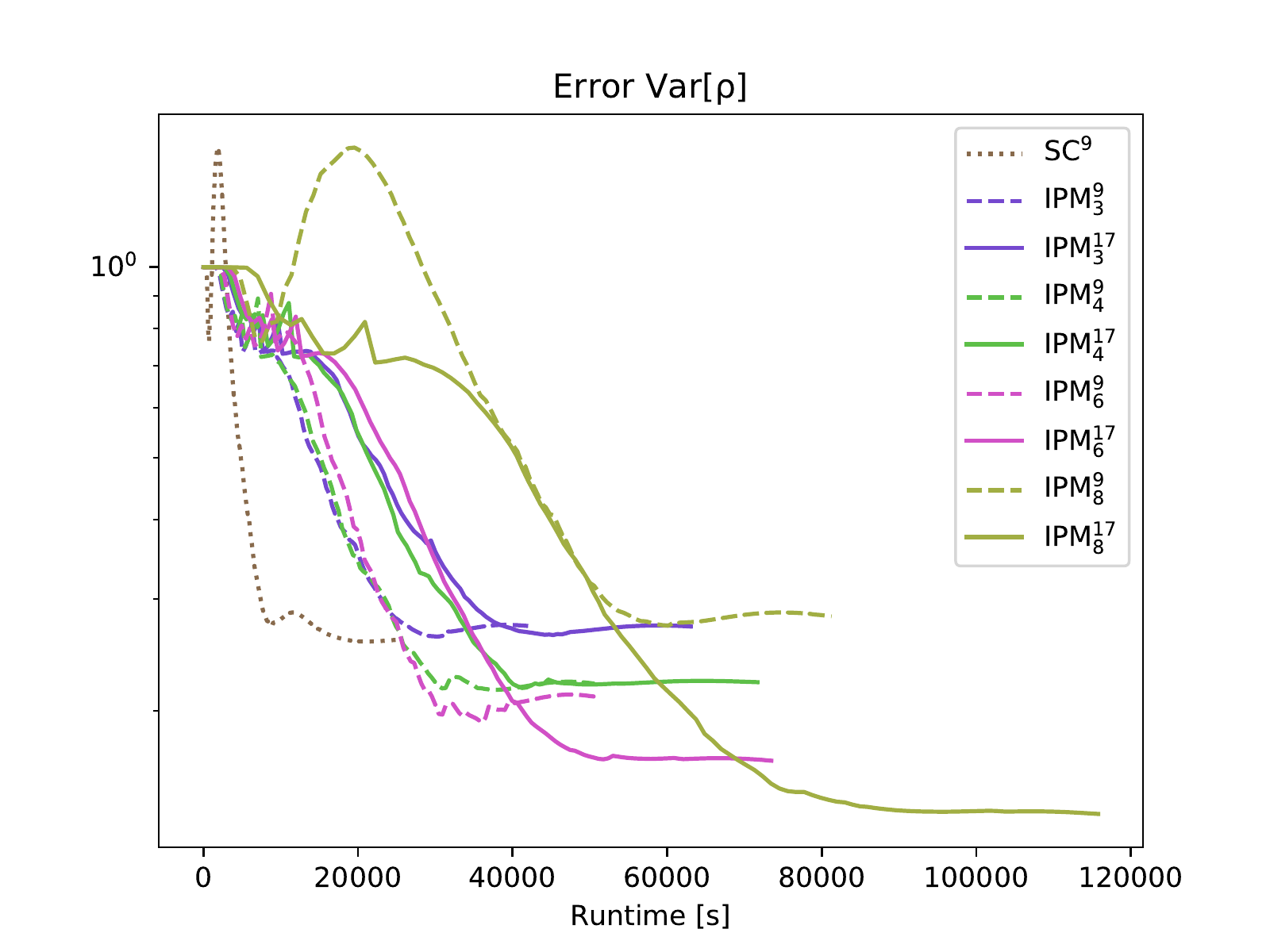}
		\caption{}
		\label{fig:ErrorDifferentQuadB}
	\end{subfigure}
	\caption{Relative L$^2$-error with different quadrature levels for IPM in comparison with SC. The subscript denotes the moment order, the superscript denotes the number of Clenshaw-Curtis quadrature points.}
	\label{fig:ErrorDifferentQuad}
\end{figure}
\begin{figure}[h!]
\centering
	\begin{subfigure}{0.5\linewidth}
		\centering
				\includegraphics[width=\linewidth]{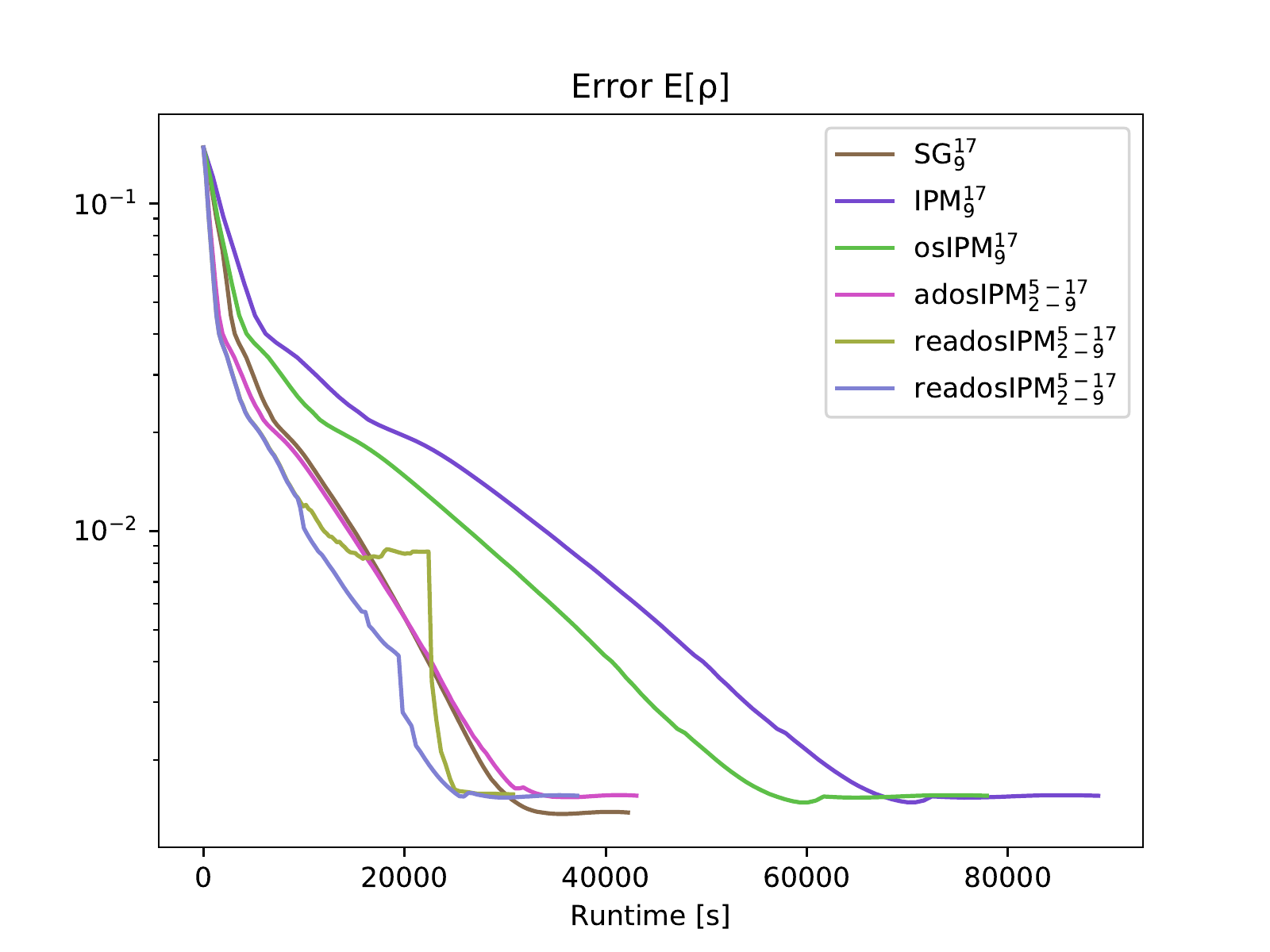}
		\caption{}
		\label{fig:sub31}
	\end{subfigure}%
	\begin{subfigure}{0.5\linewidth}
		\centering
				\includegraphics[width=\linewidth]{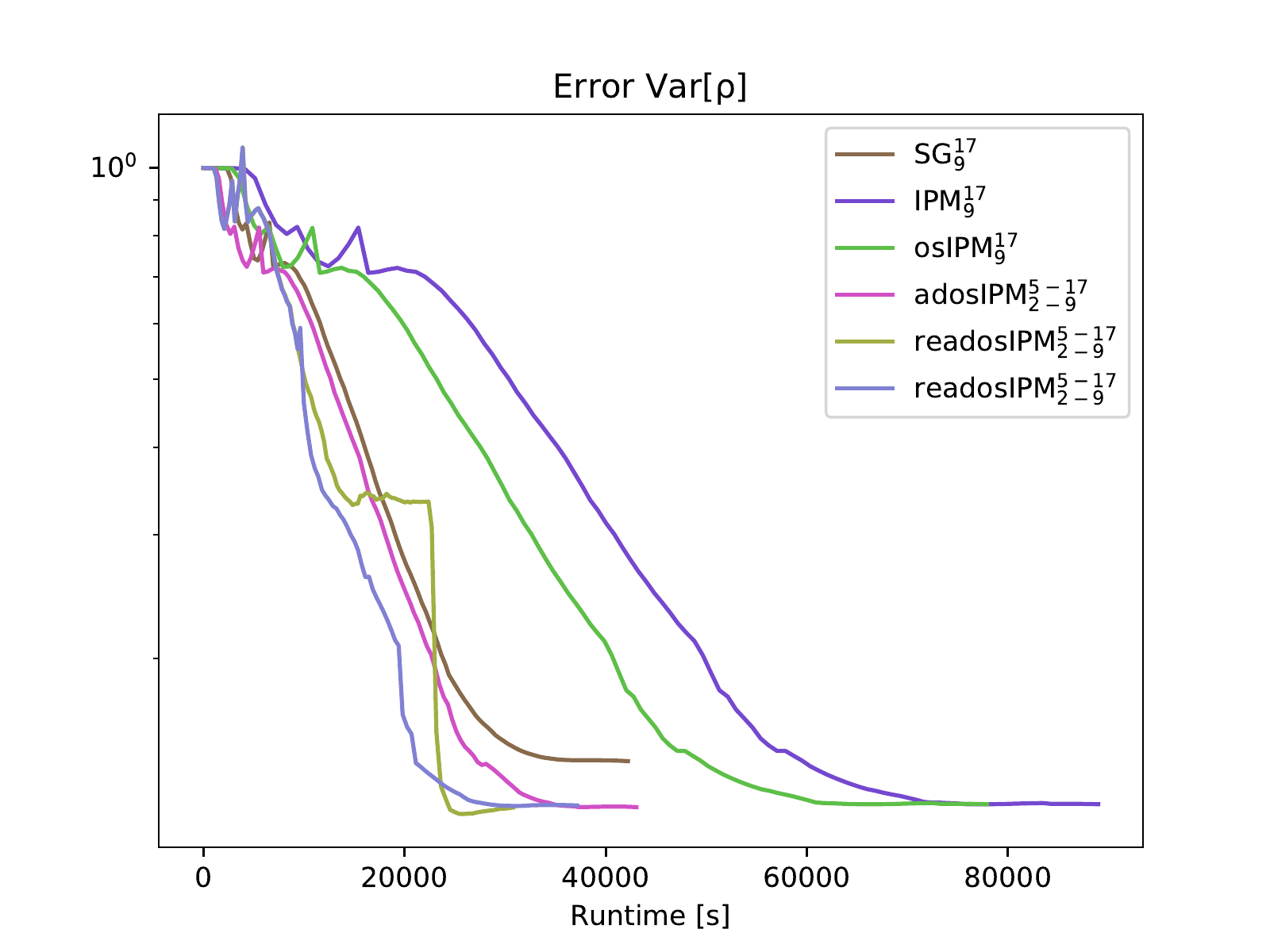}
		\caption{}
		\label{fig:sub32}
	\end{subfigure}
	
	\begin{subfigure}{0.5\linewidth}
		\centering
				\includegraphics[width=\linewidth]{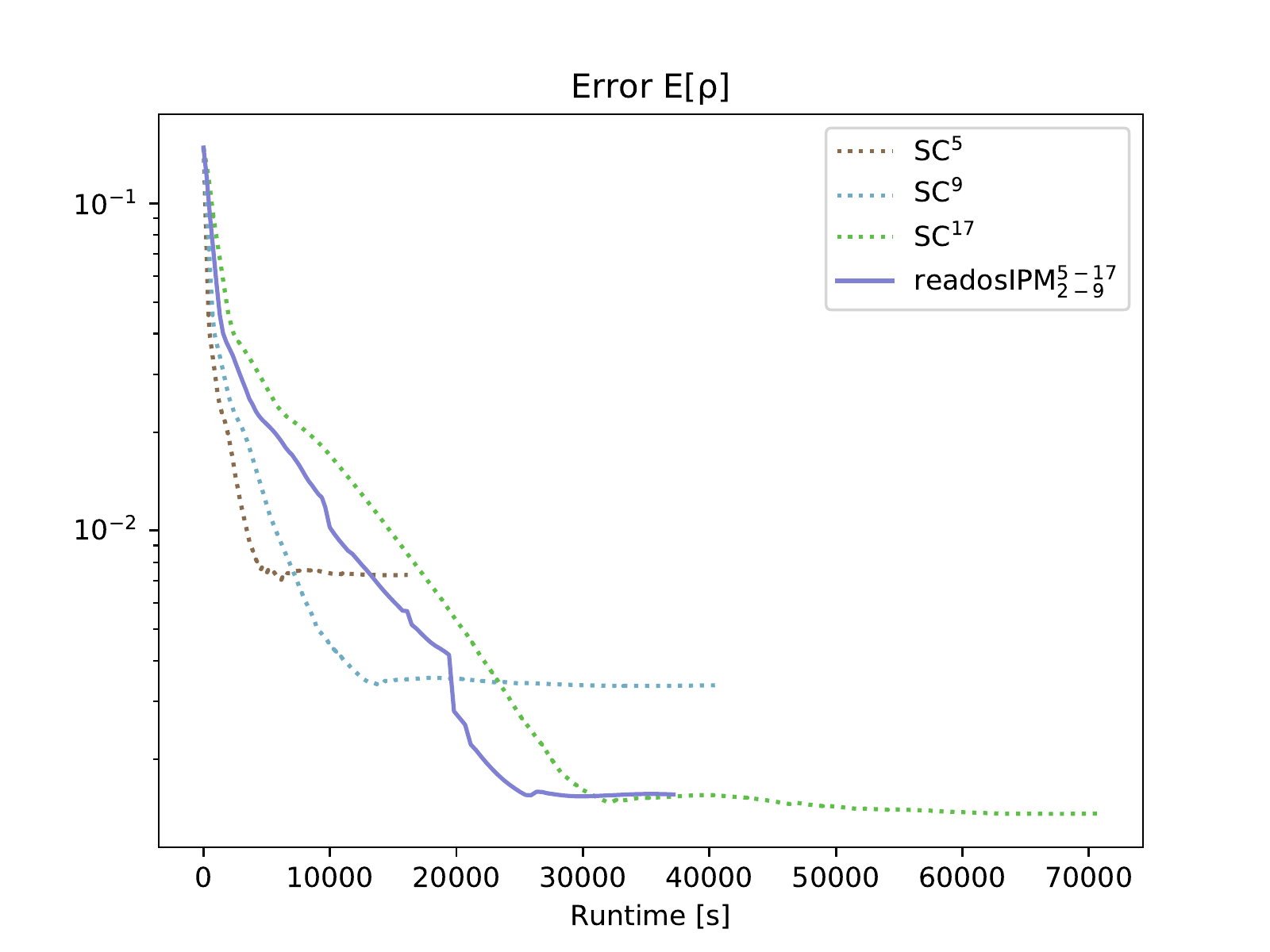}
		\caption{}
		\label{fig:sub33}
	\end{subfigure}%
	\begin{subfigure}{0.5\linewidth}
		\centering
				\includegraphics[width=\linewidth]{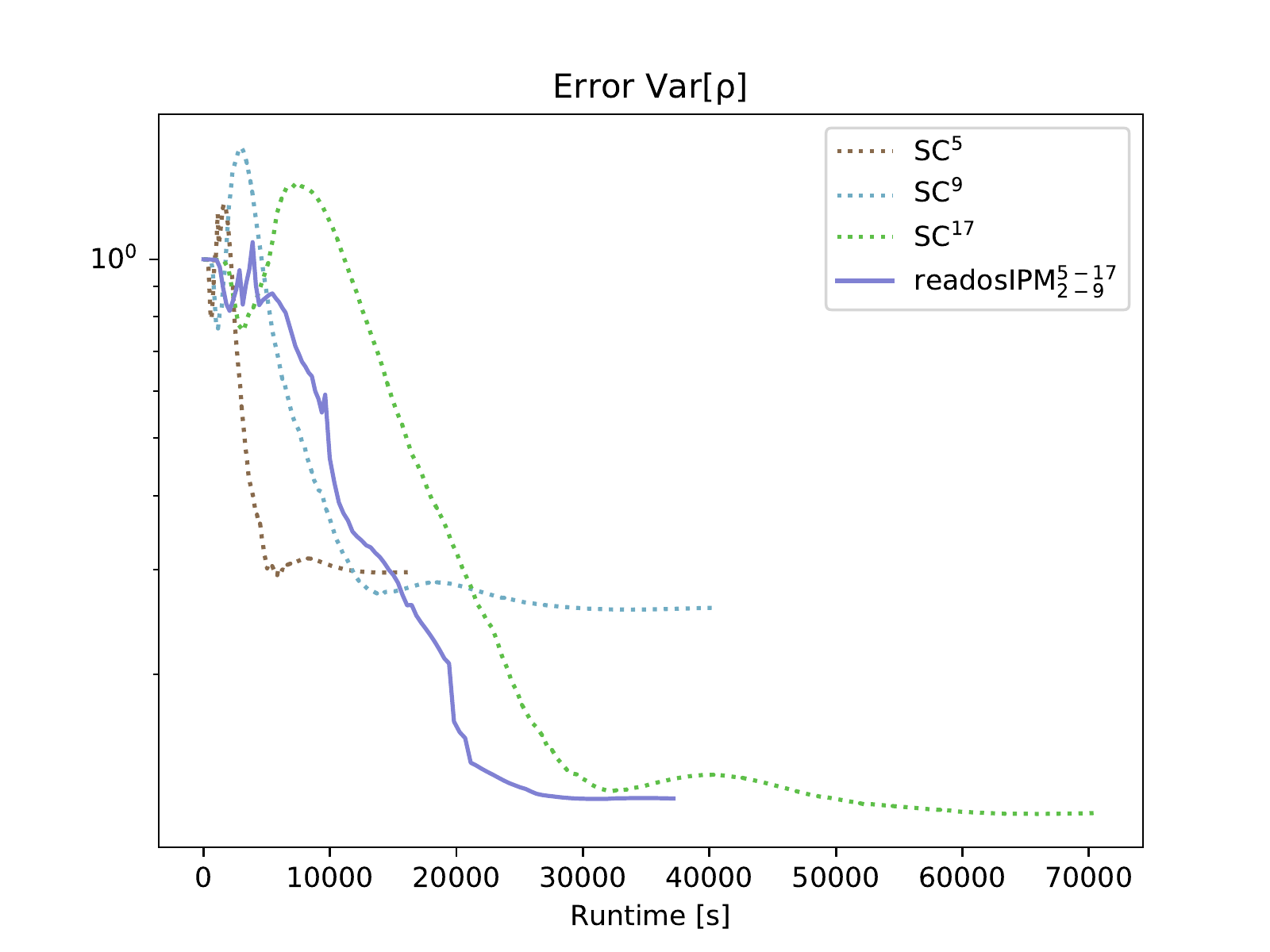}
		\caption{}
		\label{fig:sub34}
	\end{subfigure}
	\caption{Comparison of the relative L$^2$-error for the density for IPM related methods (first row) and of the best performing IPM method in comparison with SC (second row). All intrusive methods converge to a residual $\varepsilon=6\cdot 10^{-6}$, whereas all non-intrusive methods converge to a residual $\varepsilon=1\cdot 10^{-7}$. All computations are performed with 5 MPI threads.}
	\label{fig:L2ErrorSolution}
\end{figure}
\begin{figure}[h!]
\centering
	\begin{subfigure}{0.329\linewidth}
		\centering
		\includegraphics[width=\linewidth]{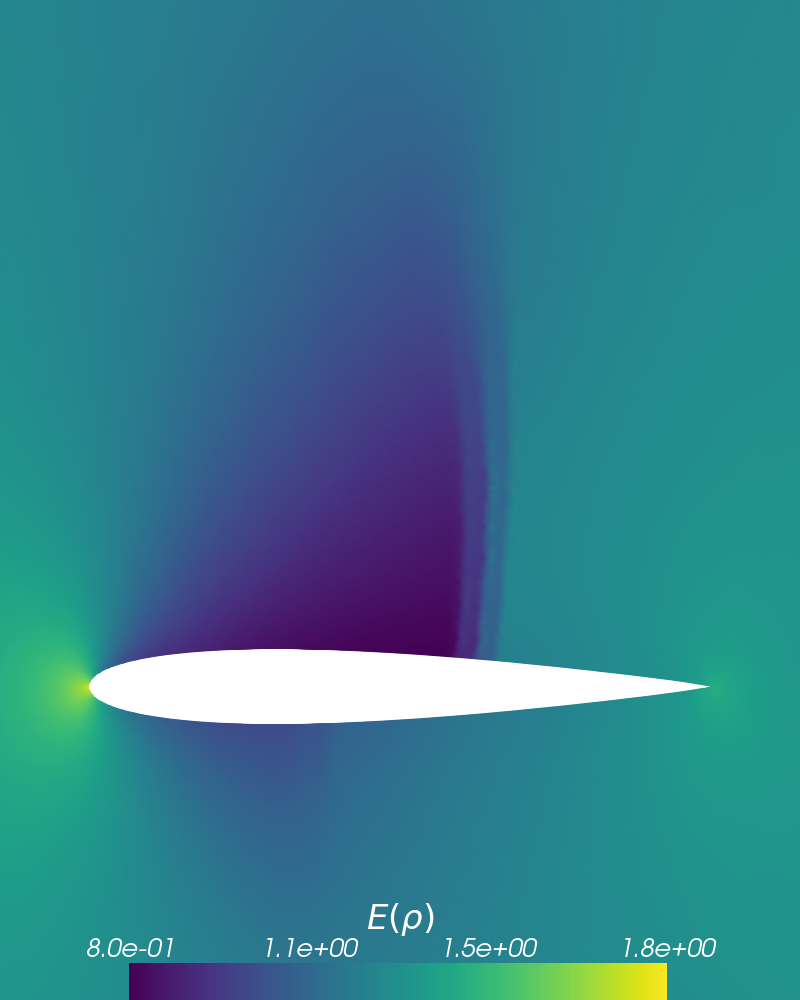}
		\label{fig:sub1}
	\end{subfigure}%
	\hfill
	\begin{subfigure}{0.329\linewidth}
		\centering
		\includegraphics[width=\linewidth]{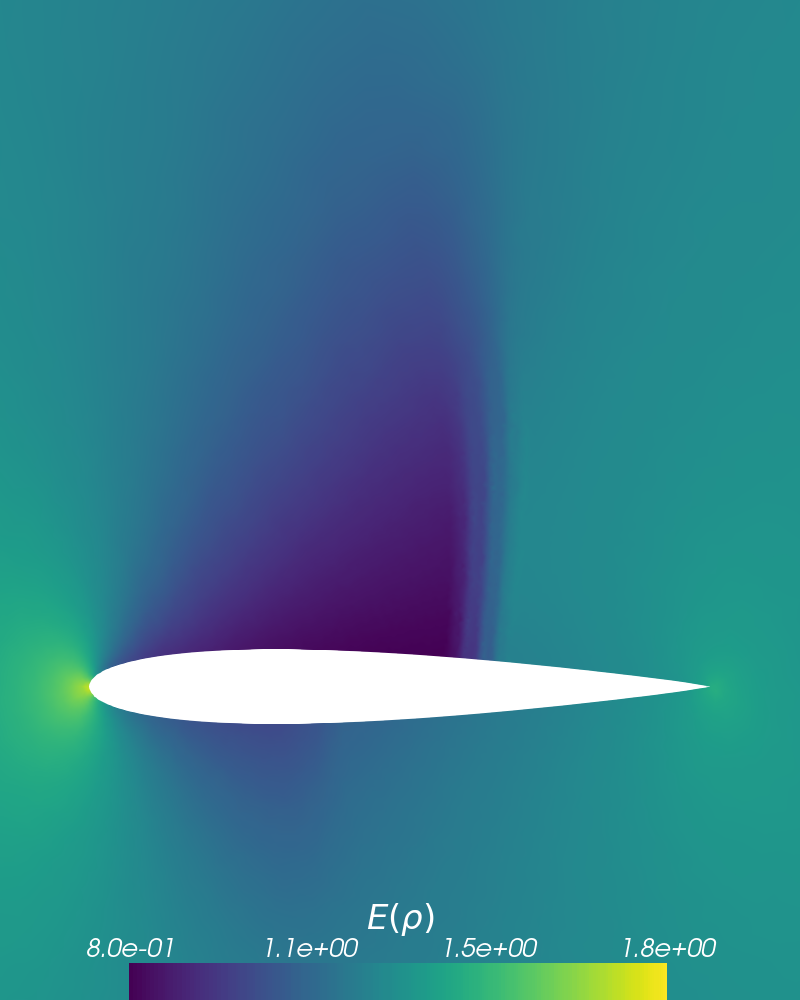}
		\label{fig:sub2}
	\end{subfigure}%
	\hfill
	\begin{subfigure}{0.329\linewidth}
		\centering
		\includegraphics[width=\linewidth]{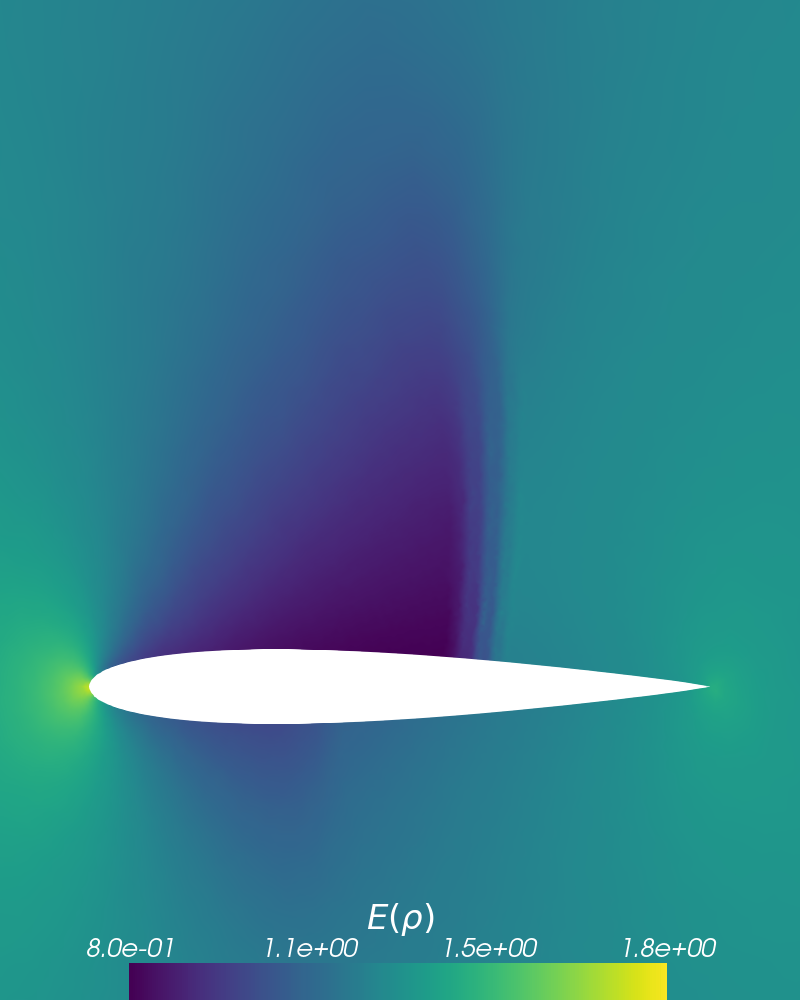}
		\label{fig:sub1}
	\end{subfigure}\\
	\vspace{-0.35cm}
	\begin{subfigure}{0.329\linewidth}
		\centering
		\includegraphics[width=\linewidth]{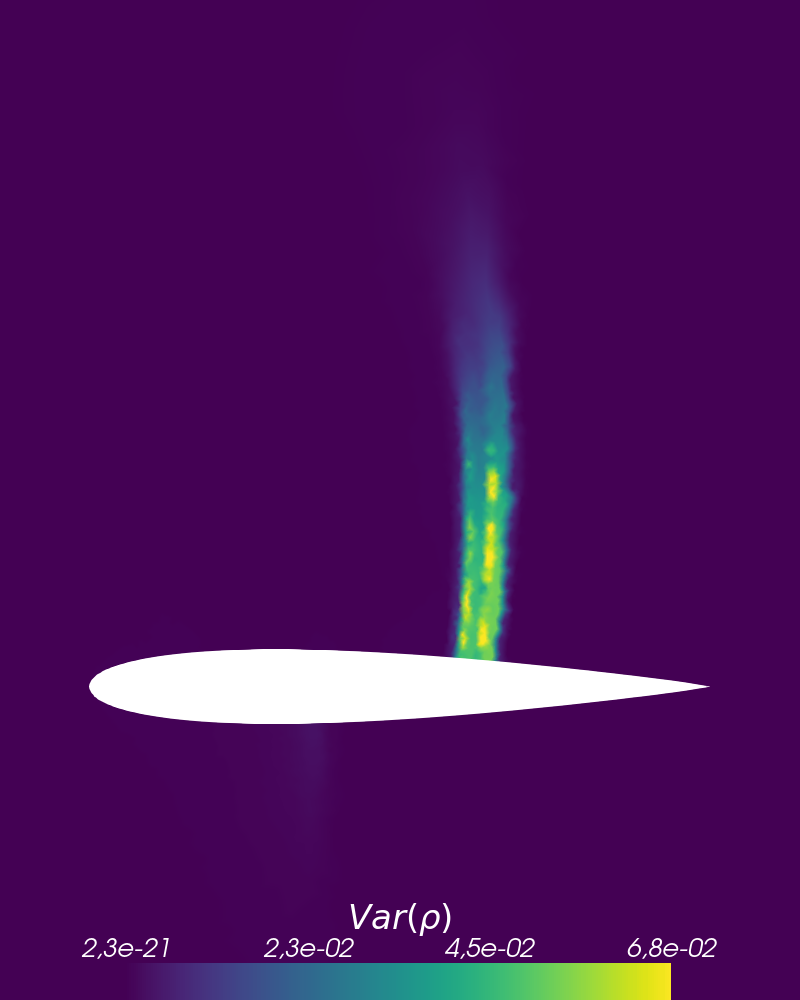}
		\label{fig:sub1}
	\end{subfigure}%
	\hfill
	\begin{subfigure}{0.329\linewidth}
		\centering
		\includegraphics[width=\linewidth]{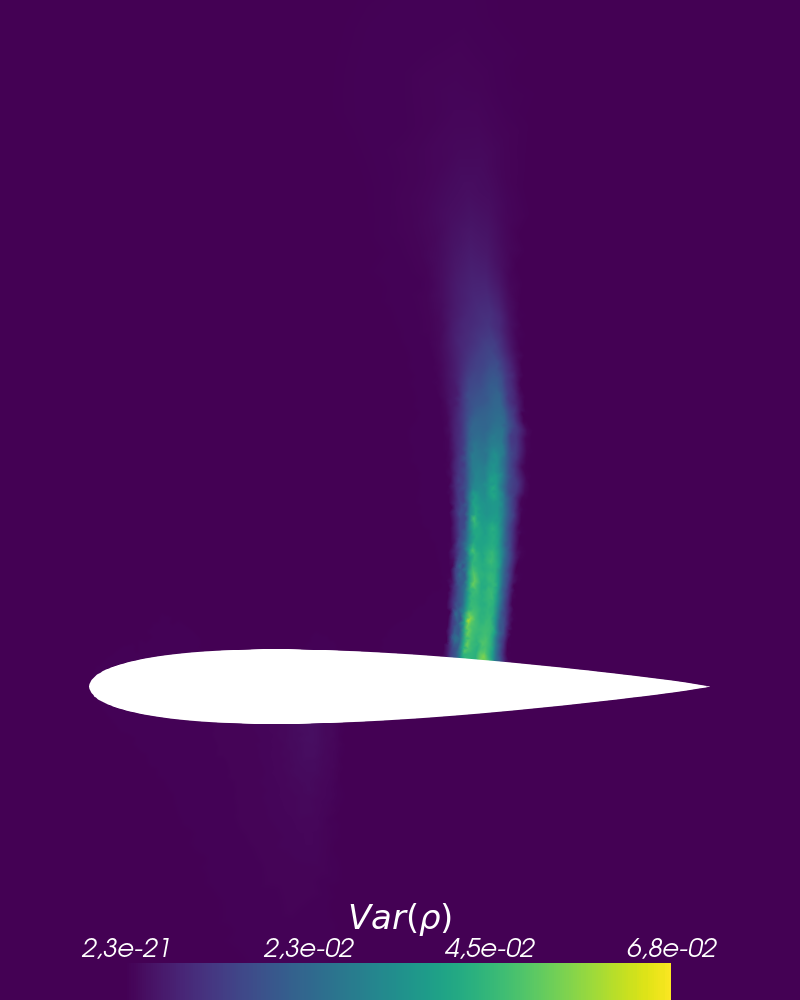}
		\label{fig:sub2}
	\end{subfigure}%
	\hfill
	\begin{subfigure}{0.329\linewidth}
		\centering
		\includegraphics[width=\linewidth]{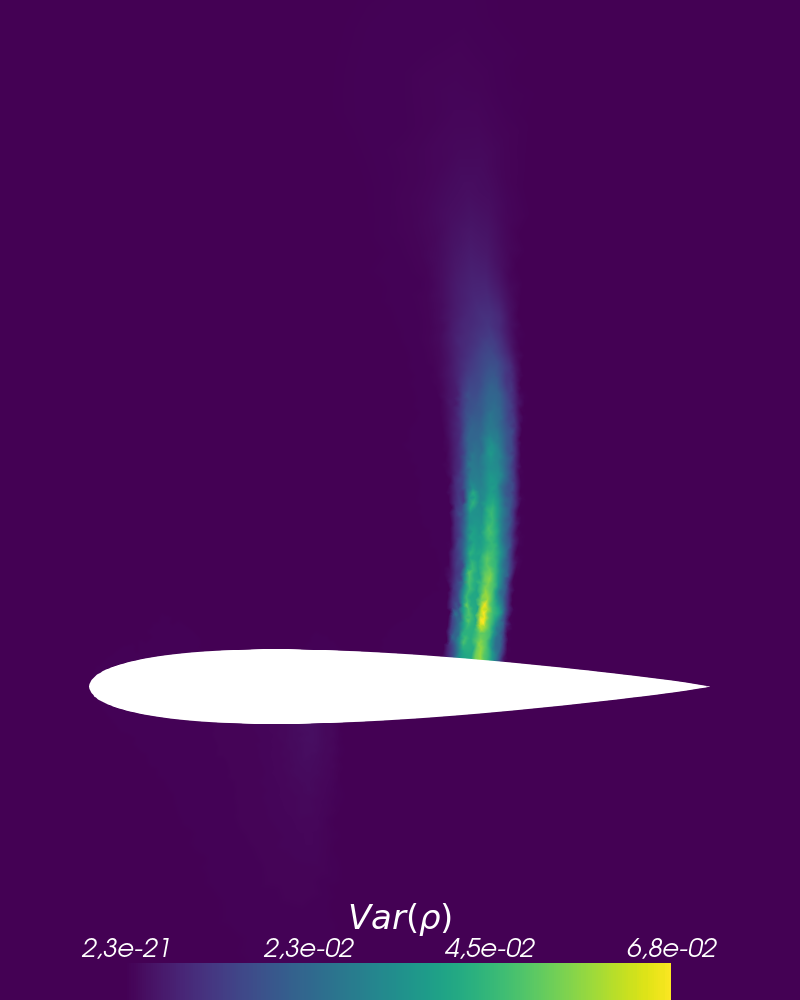}
		\label{fig:sub1}
	\end{subfigure}
	\caption{E$[\rho]$ and Var$[\rho]$ computed with SC$^5$, SG$_4$, IPM$_4$ (from left to right). Compare to the reference solution shown in Figure \ref{fig:referenceSolution}.}
	\label{fig:ERhoVarRho5}
\end{figure}

Recall that the numerical flux \eqref{eq:numericalFluxIPM} uses a quadrature rule to approximate integrals. We start by investigating the effects this quadrature has on the solution accuracy. For this, we run the IPM method with a moment order ranging from 3 to 7 using a Clenshaw-Curtis quadrature rule with level three (i.e. 9 quadrature points) and level four (i.e. 17 quadrature points). A comparison of the error obtained with these two quadrature levels is given in Figure~\ref{fig:ErrorDifferentQuad}. To denote the number of quadrature points, we use a superscript and the moment order is denoted by a subscript. We observe the following:
\begin{itemize}
\item When for example comparing the error obtained with IPM$_8^9$ and IPM$_8^{17}$ (i.e. the polynomial order is $8$, meaning that $9$ moments are used and the computation is done using $9$ and $17$ quadrature points) it can be seen that the error stagnates when the chosen quadrature is not sufficiently accurate. Hence, the accuracy level is dominated by aliasing effects that result from an inaccurate quadrature rule in the numerical flux and not the truncation error of the moment system.
\item If the truncation order is sufficiently small, both quadrature levels yield the same accuracy. We observe this behavior for the expectation value until a truncation order of $N=5$ and for the variance for $N=4$. Hence, the variance is more sensitive to aliasing errors.
\item Figure~\ref{fig:ErrorDifferentQuadA} reveals that the IPM error of E[$\rho$] with 9 quadrature points stagnates at the error level of SC$^9$, i.e. the aliasing error heavily affects the accuracy. This behavior becomes more dominant when looking at the variance error in Figure~\ref{fig:ErrorDifferentQuadB}. Here, IPM$_8^{17}$ yields a significantly improved result compared to IPM$_8^9$. Again, the IPM$_8^9$ result stagnates at the SC$^9$ accuracy level, which can be reached with IPM when only using four moments (combined with a sufficiently accurate quadrature level). Hence, the number of moments needed for IPM to obtain a certain variance error is significantly smaller than the number of quadrature points needed for SC. This result can be observed throughout the numerical experiments of this work. Especially for high dimensional problems, this potentially decreases the number of unknowns to reach a certain accuracy level significantly. 
\end{itemize}

Let us now compare results obtained with stochastic-Galerkin and IPM as well as its proposed acceleration techniques at a fixed moment order $9$. Note, that since IPM generalizes SG, all proposed techniques can be used for stochastic-Galerkin as well. All adaptive methods use gPC polynomials of order 2 to 9 (i.e. 3 to 10 moments). Order $2$ uses $5$ quadrature points, orders $3$ to $6$ use $9$ quadrature points and orders $8$ and $9$ use $17$ quadrature points. When the smoothness indicator \eqref{eq:errorIndicator} lies below $\delta_{-} = 2\cdot 10^{-4}$, the adaptive methods decrease the truncation order, if it lies above $\delta_{+} = 2\cdot 10^{-5}$ the truncation order is increased. The remaining methods have been computed with $17$ quadrature points. The iteration in pseudo-time is performed until the expectation value of the density fulfills the stopping criterion \eqref{eq:residualSteady} with $\varepsilon = 6\cdot 10^{-6}$, however it can be seen that the error saturates already at a bigger residual.

First, let us mention that the adaptive SG method fails, since it yields negative densities during the iteration. The standard SG method however preserves positivity of mass, energy and pressure. The change of the relative L$^2$-error during the iteration to the steady state has been recorded in Figure~\ref{fig:sub31} for the expectation value and in Figure~\ref{fig:sub32} for the variance. When comparing intrusive methods without acceleration techniques as well as SC, the following properties emerge:
\begin{itemize}
\item Compared to IPM, stochastic-Galerkin comes at a significantly reduced runtime, meaning that the IPM optimization problem requires a significant computational effort.
\item For the expectation value SG$_9$ shows a smaller error compared to IPM$_9$, while for the variance, we see the opposite, i.e. IPM yields a better solution approximation than SG.
\end{itemize}
The proposed acceleration techniques show the following behavior:
\begin{itemize}
\item The One-Shot IPM (osIPM) method proposed in Section~\ref{sec:OneShotIPM} reduces the runtime while yielding the same error as the classical IPM method.
\item When using adaptivity (see Algorithm~\ref{alg:ad-IPM}) in combination with the One-Shot idea, the method is denoted by \textit{adaptive One-Shot IPM} (adosIPM). This method reaches the steady state IPM solution with a faster runtime than SG.
\item The idea of refinement retardation combined with adosIPM (see Algorithm~\ref{alg:adosIPM}) is denoted by \textit{retardation adosIPM} (readosIPM), which further decreases runtime. Here, we use two different strategies to increase the accuracy: First, we steadily increase the maximal truncation order when the residual approaches zero. To determine residual values for a given set of truncation orders $2,4,5$ and $8$, we study at which residual level the IPM method reaches a saturated error level for each truncation order. The residual values are then determined to be $6\cdot 10^{-5},3\cdot 10^{-5},2.2\cdot 10^{-5}$ and $2\cdot 10^{-5}$. The second, straight forward strategy converges the solution on a low truncation order of $2$ to a residual of $10^{-5}$ and then switches to a maximal truncation order of $9$. Strategy 1 is depicted in purple, Strategy 2 is depicted in yellow. It can be seen that both approaches reach the IPM$_9$ error for the same run time. Hence, we deduce that a naive choice of the refinement retardation strategy suffices to yield a satisfactory behavior.
\end{itemize}

Let us now compare the results from intrusive methods with those of SC. For every quadrature point, SC iterates the solution until the density lies below a threshold of $\varepsilon = 1\cdot 10^{-7}$. Recording the error of intrusive methods during the iteration is straight forward. To record the error of SC, we couple all quadrature points after each iteration to evaluate the expectation value and variance, which destroys the black-box nature and results in additional costs. The collocation runtimes, that are depicted on the $x$-axis in Figures~\ref{fig:ErrorDifferentQuad}, \ref{fig:sub33}, \ref{fig:sub34} and \ref{fig:L2ErrorSolution2D}, are however rescaled to the runtimes that we achieve when running the collocation code in its original, black-box framework without recording the error. Our stochastic-Collocation computations use Clenshaw-Curtis quadrature levels $2,3$ and $4$, i.e. $5,9$ and $17$ quadrature points. The comparison of intrusive methods with non-intrusive methods shows the following:
\begin{itemize}
\item SC requires a smaller residual to converge to a steady state solution (we converge the results to $\varepsilon = 1\cdot 10^{-7}$ compared to $\varepsilon = 6\cdot 10^{-6}$ for intrusive methods).
\item Again, intrusive methods yield improved solutions compared to SC with the same number of unknowns. Actually, the error obtained with 17 unknowns when using SC is comparable with the error obtained with 10 unknowns when using intrusive methods.
\end{itemize}

Let us finally take a look at the expectation value and variance computed with different methods. All results are depicted for a zoomed view around the airfoil. Figure~\ref{fig:ERhoVarRho5} shows the expectation value (first row) and variance (second row) computed with $5$ quadrature points for SC and $5$ moments for SG and IPM. One can observe the following
\begin{itemize}
\item All methods yield non-physical step-like profiles of the expectation value and variance along the airfoil. This effect can be observed in various settings \cite{le2004uncertainty,kusch2018filtered,poette2019contribution,barth2013non,dwight2013adaptive} and stems from Gibb's phenomena in the polynomial description of the uncertainty, either in the gPC polynomials of intrusive methods or the Lagrange polynomials used in collocation techniques. 
\item The jump position of the intrusive solution profiles capture the exact behavior more accurately.
\end{itemize}
The readosIPM results as well as the corresponding refinement levels are depicted in Figure~\ref{fig:readosIPMEVar}. One observes that the solution no longer shows the previously observed discontinuous profile and yield a satisfactory agreement with the reference solution depicted in Figure~\ref{fig:referenceSolution}. The refinement level shows that away from the airfoil, a refinement level of 0 (i.e. a truncation order of 2) suffices to yield the IPM$_9$ solution. The region with a high variance requires a refinement level of 7 (truncation order 9).
\begin{figure}[h!]
\centering
	\begin{subfigure}{0.329\linewidth}
		\centering
		\includegraphics[width=\linewidth]{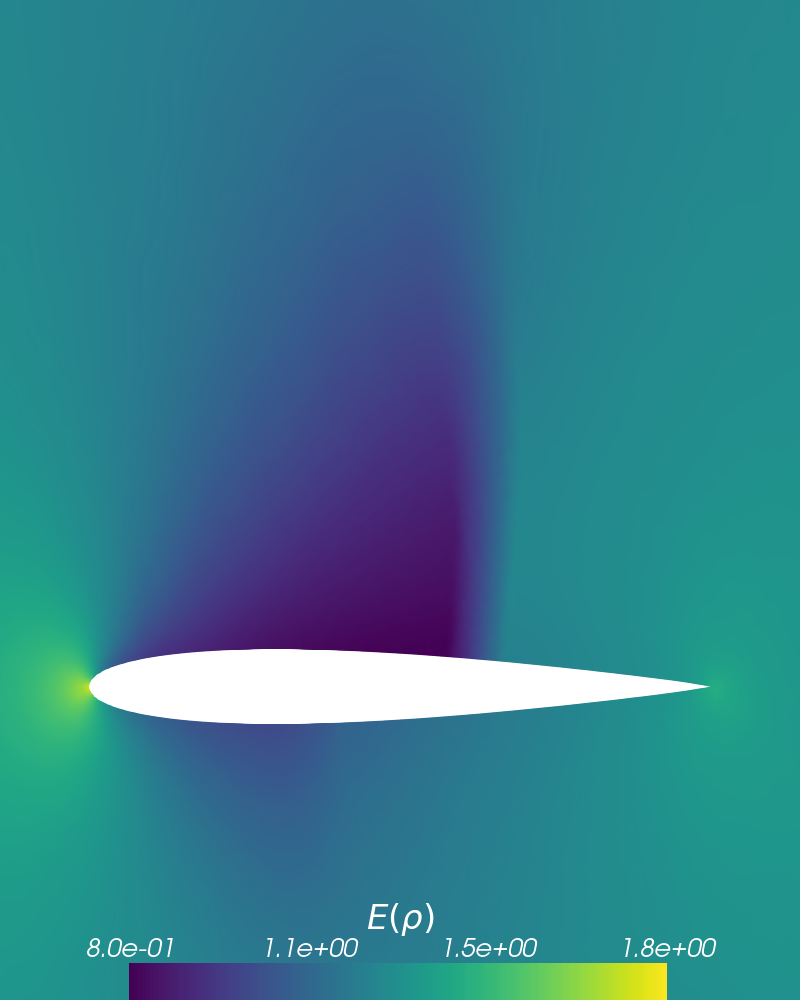}
		\label{fig:sub1}
	\end{subfigure}%
	\hfill
	\begin{subfigure}{0.329\linewidth}
		\centering
		\includegraphics[width=\linewidth]{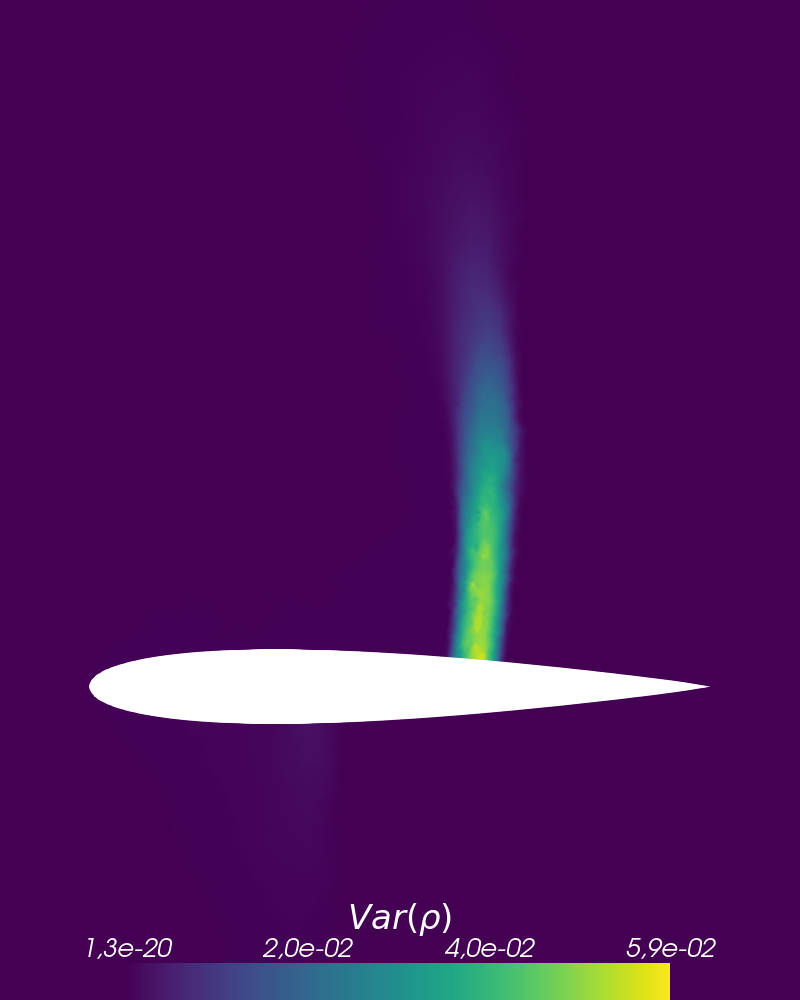}
		\label{fig:sub2}
	\end{subfigure}%
	\hfill
	\begin{subfigure}{0.329\linewidth}
		\centering
		\includegraphics[width=\linewidth]{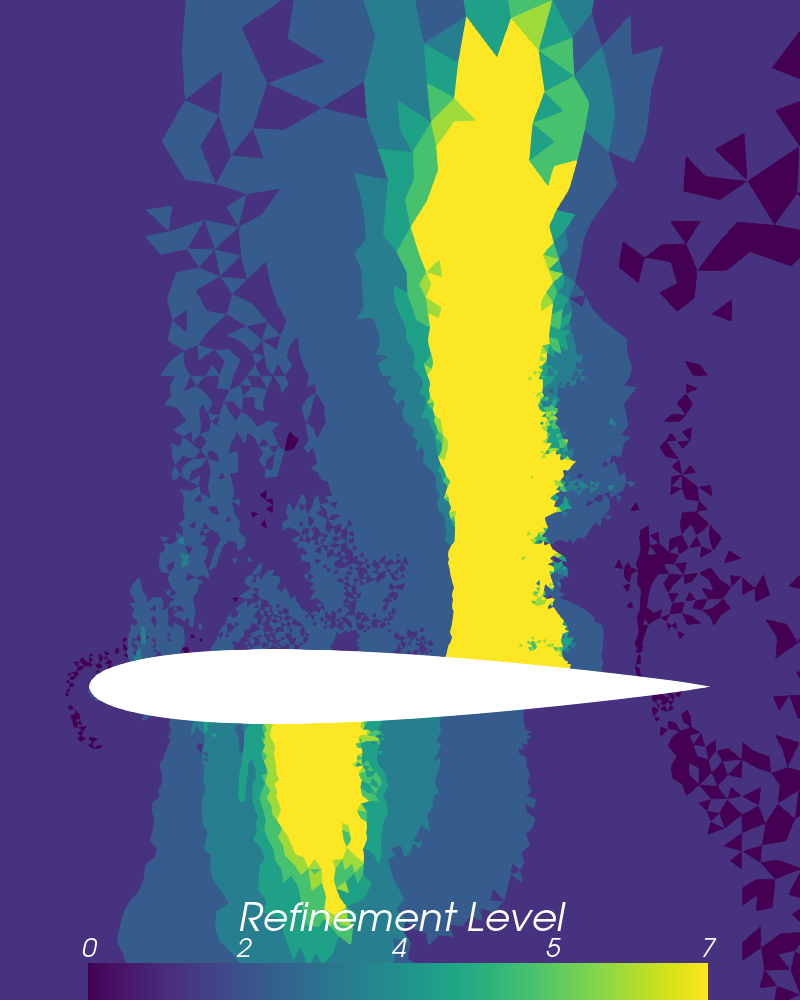}
		\label{fig:sub1}
	\end{subfigure}
	\caption{E$[\rho]$, Var$[\rho]$ and refinement level for readosIPM$_{2-9}$.}
	\label{fig:readosIPMEVar}
\end{figure}


\subsection{2D Euler equations with a two dimensional uncertainty}

In the following, we assume a fixed angle of attack with $\phi = 1.25$ degrees and study the effect of two sources of uncertainties, namely the farfield pressure and Mach number. The farfield pressure is ${p \sim U(100\;325,102\;325)}$ Pa and the Mach number is $Ma \sim U(0.775,0.825)$. Since this problem only has a two-dimensional uncertainty, we use a tensorized quadrature set, which in our experiments proved to be more efficient than a sparse grid quadrature. For SC, we use quadrature sets with $5^2$, $9^2$ and $17^2$ quadrature nodes and compare against SG with moments up to total degree $9$ as well as adaptive IPM with refinement retardation (with and without the One-Shot strategy). The IPM method uses moment orders ranging from $1$ to $9$ adaptively with refinement barriers $\delta_{-} = 1\cdot 10^{-4}$ and $\delta_{+} = 1\cdot 10^{-5}$. The refinement retardation allows the truncation order to have a maximal total degree of $1$ until a residual of $\varepsilon = 1.5\cdot 10^{-5}$ and then increases the truncation order by one when the residual is reduced by an amount of $5\cdot 10^{-6}$. Hence, the maximal truncation order of $9$ is reached when the residual is below $\varepsilon = 7\cdot 10^{-6}$. Again, the refinement strategy let to negative densities for SG resulting in a failure of the method.
\begin{figure}[h!]
\centering
	\begin{subfigure}{0.5\linewidth}
		\centering
				\includegraphics[width=\linewidth]{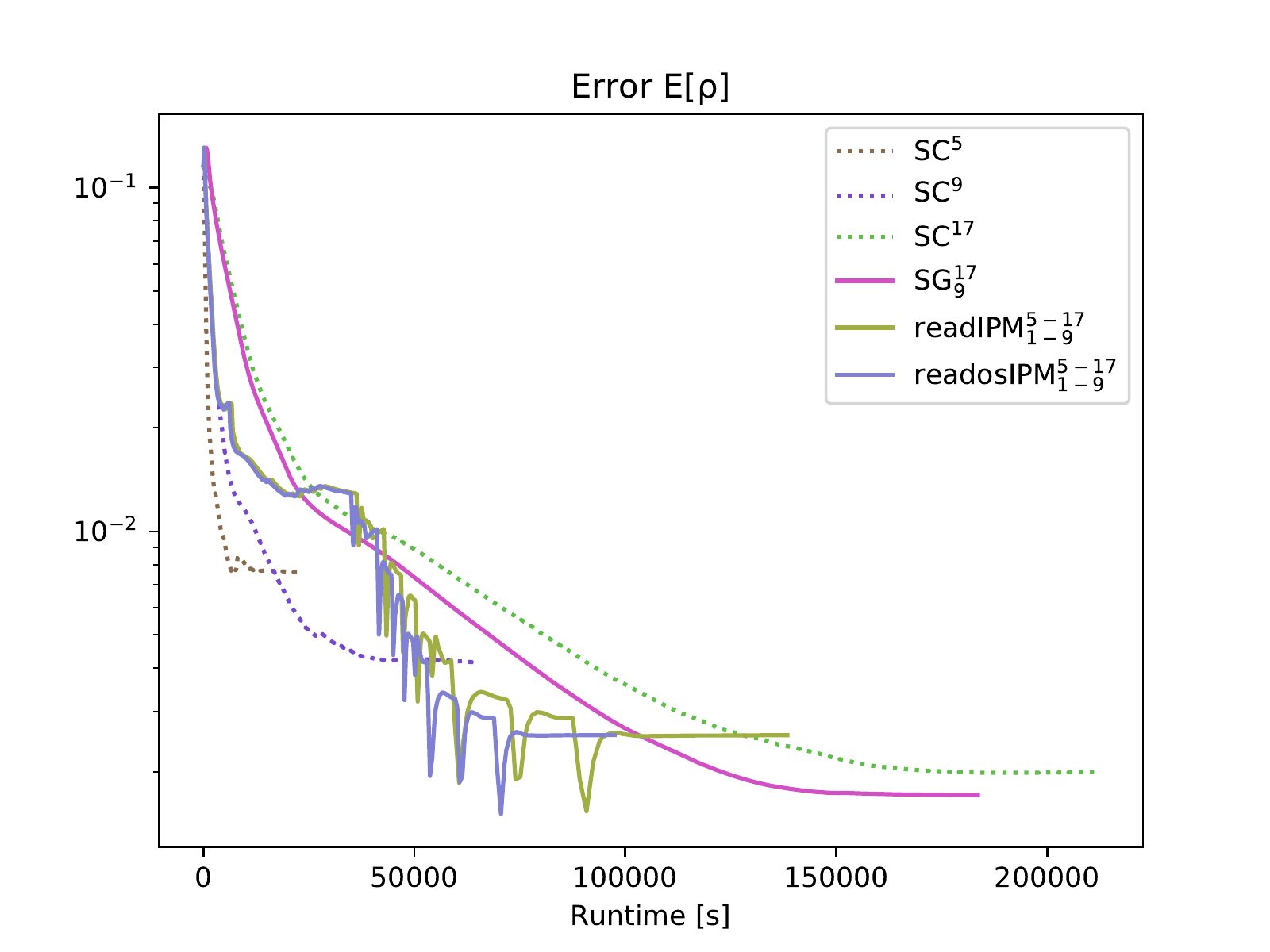}
		\label{fig:sub1}
	\end{subfigure}%
	\begin{subfigure}{0.5\linewidth}
		\centering
				\includegraphics[width=\linewidth]{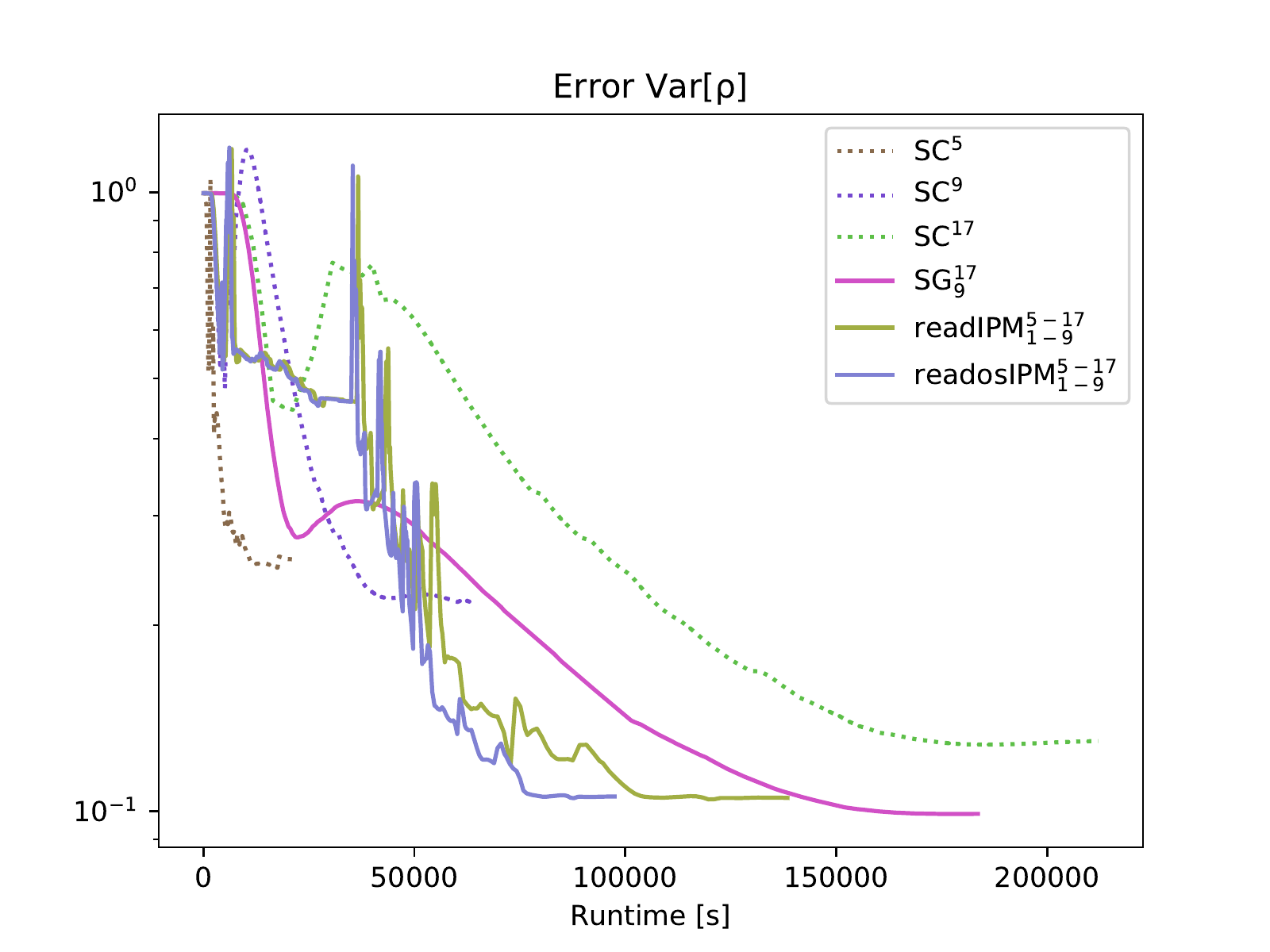}
		\label{fig:sub2}
	\end{subfigure}
	
	\begin{subfigure}{0.5\linewidth}
		\centering
				\includegraphics[width=\linewidth]{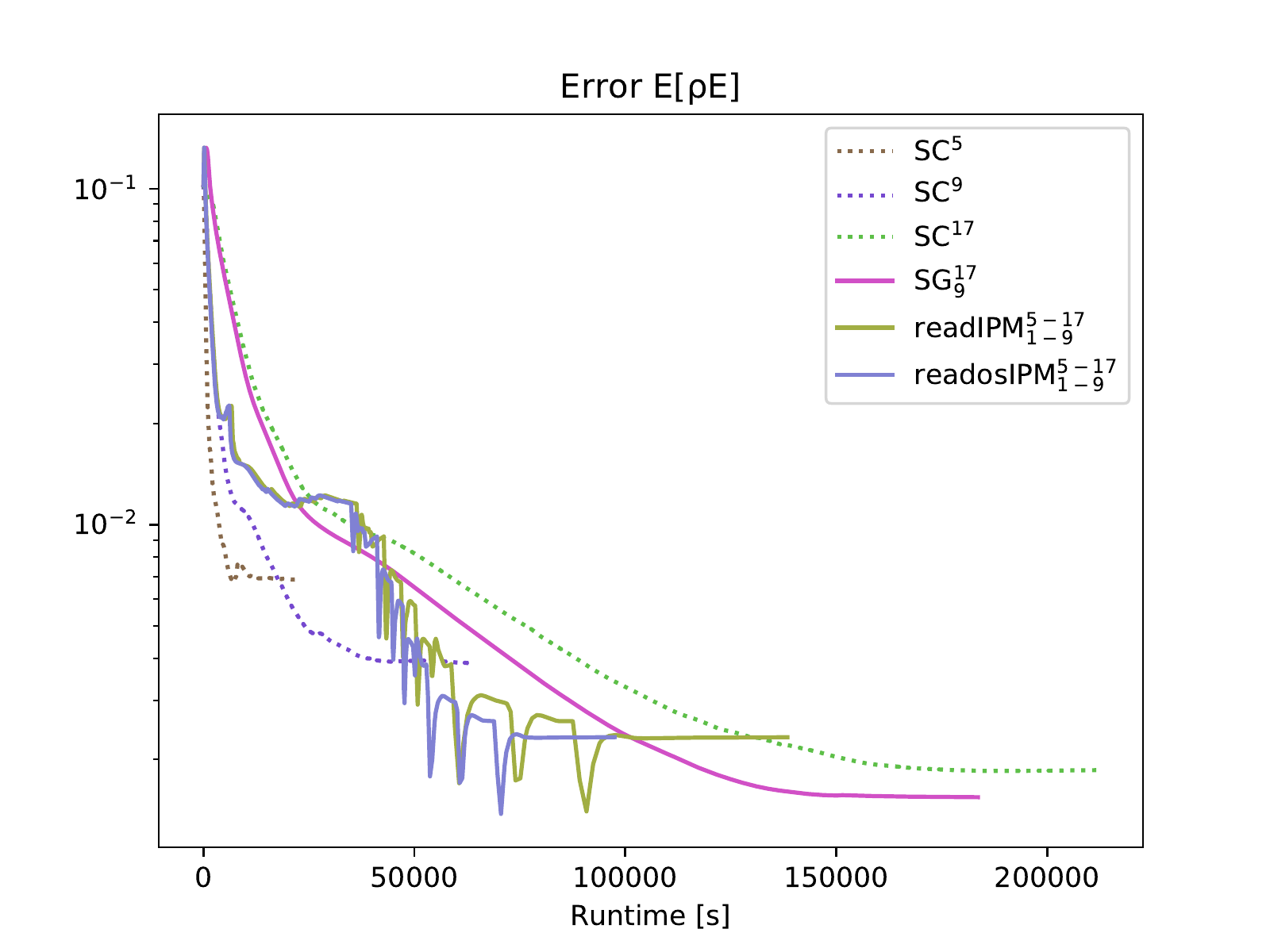}
		\label{fig:sub1}
	\end{subfigure}%
	\begin{subfigure}{0.5\linewidth}
		\centering
				\includegraphics[width=\linewidth]{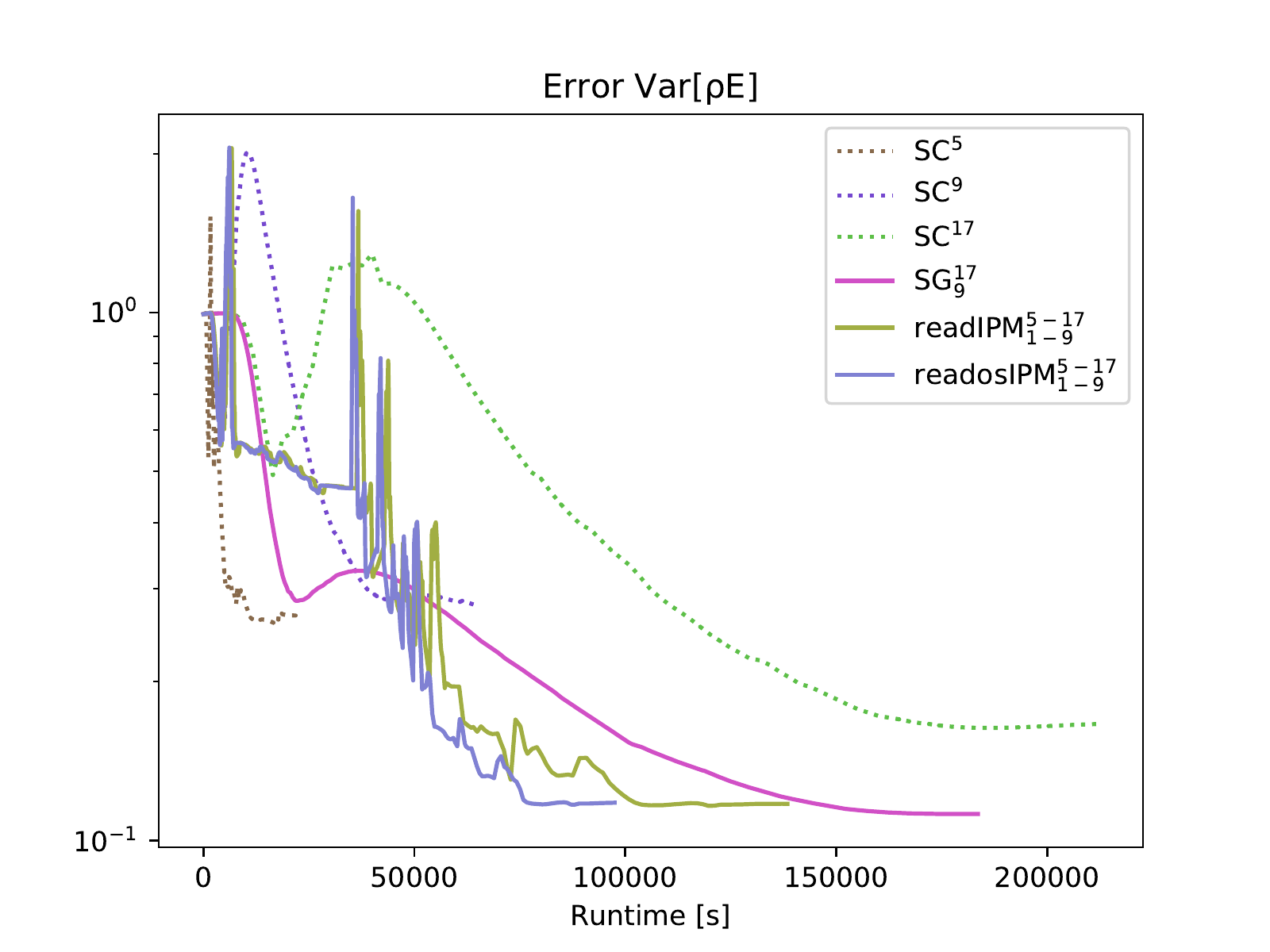}
		\label{fig:sub2}
	\end{subfigure}
	\caption{Relative L$^2$-error with 16 MPI threads for density and energy. All intrusive methods are converged to a residual $\varepsilon=6\cdot 10^{-6}$ and all non-intrusive methods are converged to a residual of $\varepsilon=1\cdot 10^{-6}$.}
	\label{fig:L2ErrorSolution2D}
\end{figure}
The error during the iteration has been recorded in Figure~\ref{fig:L2ErrorSolution2D}. To determine the error, we used stochastic-Collocation with a $50$ by $50$ Gauss-Legendre quadrature rule. As in the one-dimensional NACA testcase, the acceleration techniques lead to a heavily reduced runtime of the IPM method. Furthermore, the error obtained with the intrusive methods requires a total degree of $M = 9$, i.e. $N = 55$ moments to reach the error level of SC with 17 quadrature points per dimension i.e. $17^2 = 289$ collocation points. Note that this effect has also been observed in the one-dimensional case, however now for multi-dimensional problems, the reduced number of unknowns required for intrusive methods to obtain a certain error becomes more apparent. This shows one promising characteristic of intrusive techniques and points to their applicability for higher dimensional problems. In contrast to before, the SG error level is smaller than the IPM error for both, the expectation value and variance. Furthermore, when comparing IPM with and without One-Shot, one can observe that the effect of this acceleration strategy weighs in more heavily than it did for the one-dimensional case. This behavior is expected, since every iterate of the optimization problem becomes more expensive when the dimension is increased. This means, using a method with less iterations for every computation of the dual variables heavily reduces computational costs, which is why we consider the One-Shot strategy to be an effective approach, especially for problems with high uncertain dimension.
\begin{figure}[h!]
\centering
	\begin{subfigure}{0.329\linewidth}
		\centering
		\includegraphics[width=\linewidth]{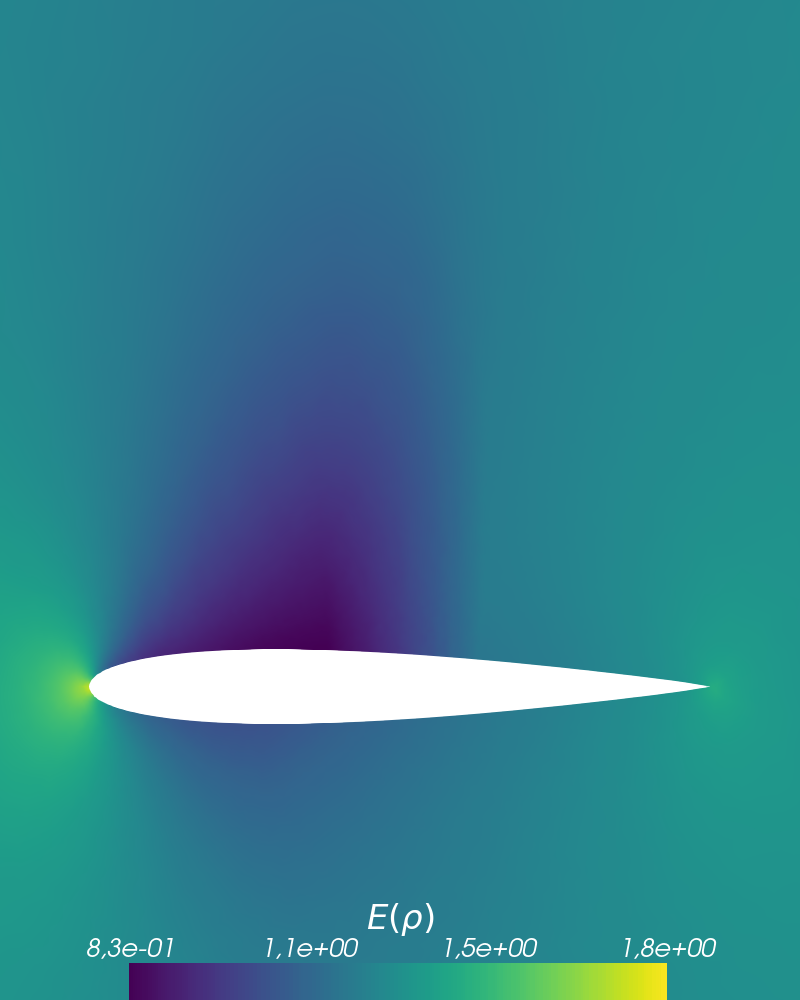}
		\caption{}
		\label{fig:referenceSolutionsub2DE}
	\end{subfigure}
	\hfill
	\begin{subfigure}{0.329\linewidth}
		\centering
		\includegraphics[width=\linewidth]{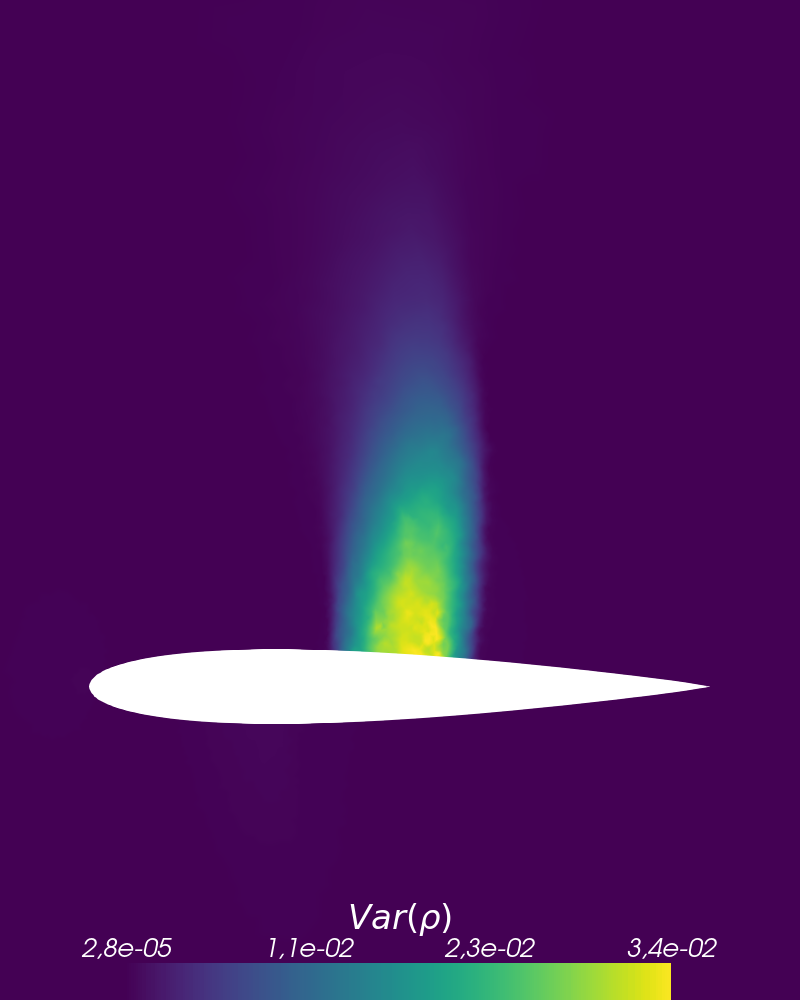}
		\caption{}
		\label{fig:referenceSolutionsub2DVar}
	\end{subfigure}
	\hfill
	\begin{subfigure}{0.329\linewidth}
		\centering
		\includegraphics[width=\linewidth]{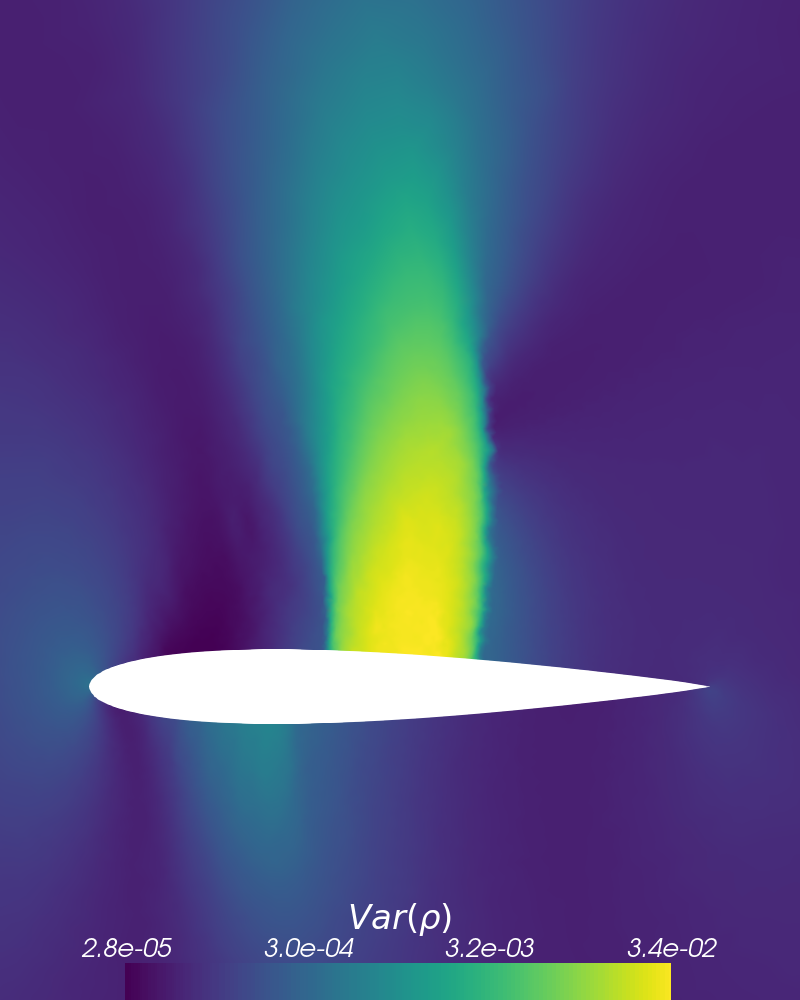}
		\caption{}
		\label{fig:referenceSolutionsub2DVarLog}
	\end{subfigure}\\
	\begin{subfigure}{0.329\linewidth}
		\centering
		\includegraphics[width=\linewidth]{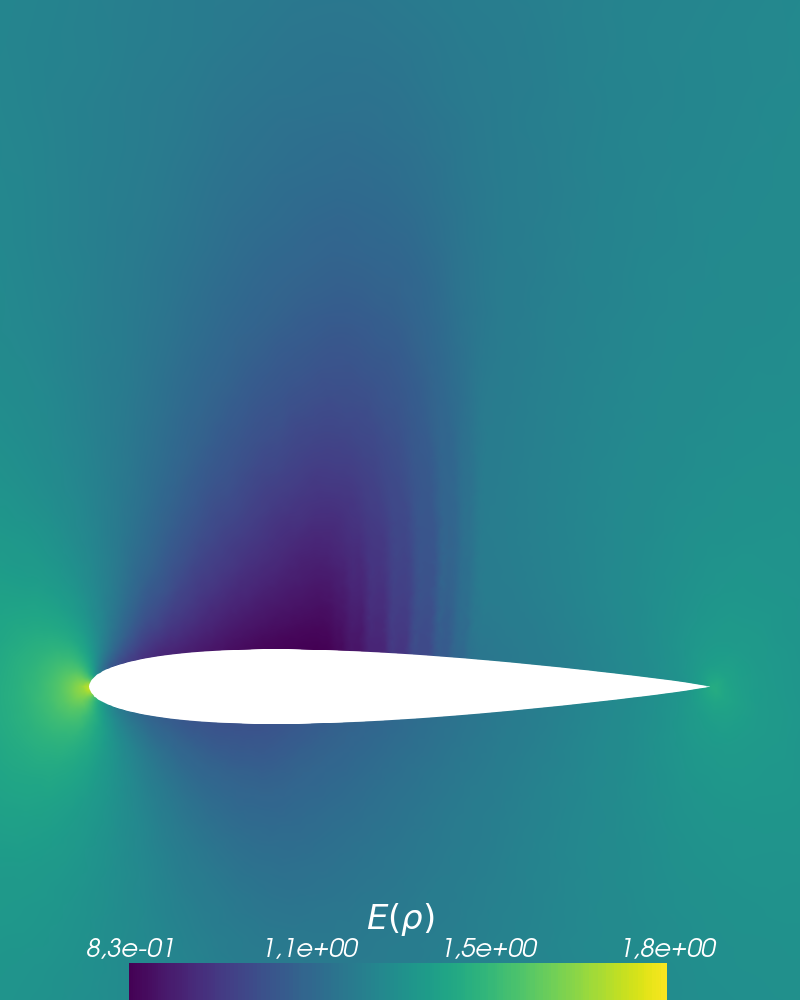}
		\caption{}
		\label{fig:reados2DE}
	\end{subfigure}%
	\hfill
	\begin{subfigure}{0.329\linewidth}
		\centering
		\includegraphics[width=\linewidth]{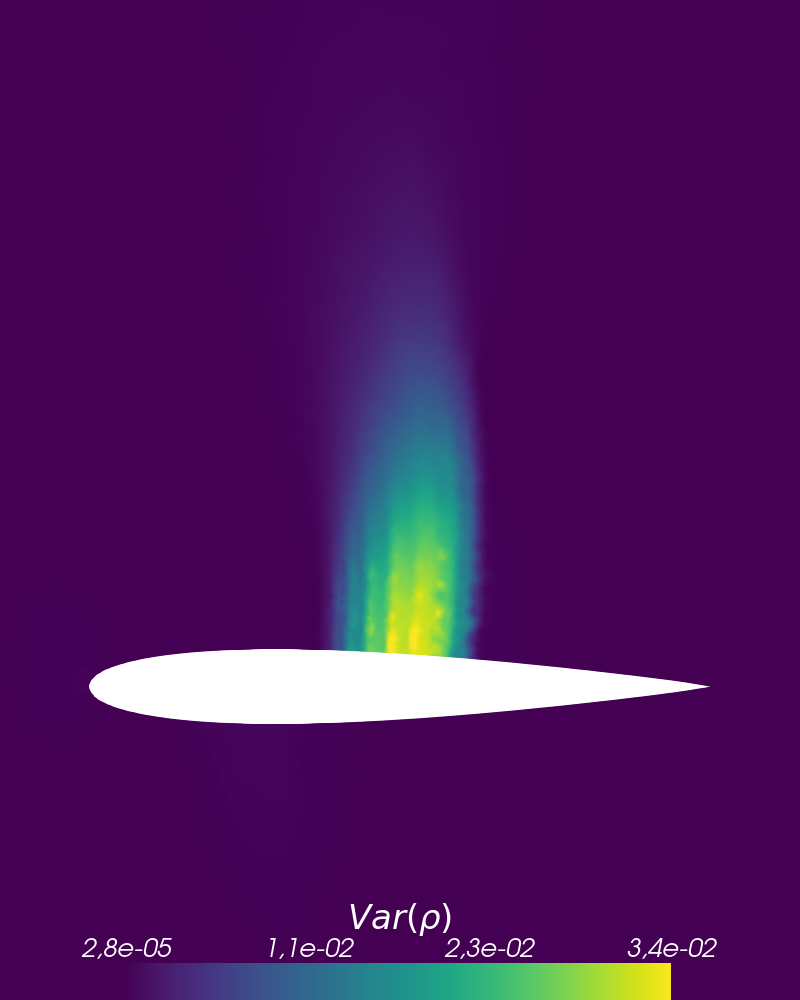}
		\caption{}
		\label{fig:reados2DVar}
	\end{subfigure}%
	\hfill
	\begin{subfigure}{0.329\linewidth}
		\centering
		\includegraphics[width=\linewidth]{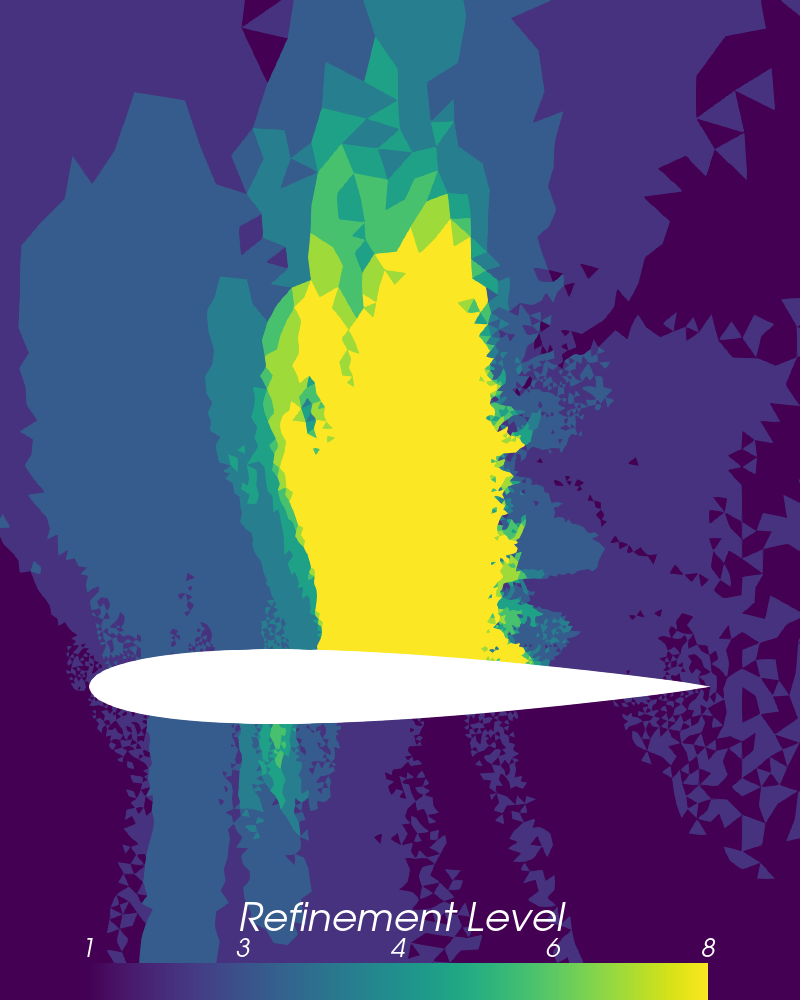}
		\caption{}
		\label{fig:readosIPMEVar2DRefinement}
	\end{subfigure}
	\caption{Reference solution (\subref{fig:referenceSolutionsub2DE}) E$[\rho]$ and (\subref{fig:referenceSolutionsub2DVar}) Var$[\rho]$ and with logarithmic scaled (\subref{fig:referenceSolutionsub2DVarLog}) Var$[\rho]$. readosIPM$_{1-9}$ solution for (\subref{fig:reados2DE}) E$[\rho]$ and (\subref{fig:reados2DVar}) Var$[\rho]$, with the resulting (\subref{fig:readosIPMEVar2DRefinement}) refinement levels.}
	\label{fig:refAndreadosIPMEVar2D}
\end{figure}
When looking at the computed IPM expectation value and variance (see Figure~\ref{fig:reados2DE} and \ref{fig:reados2DVar}) and comparing the results against the reference solution in Figure~\ref{fig:referenceSolutionsub2DE} and \ref{fig:referenceSolutionsub2DVar}, one can observe that since the uncertainties have a bigger effect on the result than in the one-dimensional case, the IPM solution still shows a step-like profile. However, we are able to capture the main features of the reference solution. The increased effect of the uncertainty can again be seen in Figure~\ref{fig:readosIPMEVar2DRefinement}, where a refinement level of $8$, i.e. a truncation order of $9$ is chosen on a larger portion than for the one-dimensional case. Visualizing the variance on a logarithmic scale in Figure~\ref{fig:referenceSolutionsub2DVarLog} further shows that the applied refinement indicators work well and enforce the usage of higher order moments in areas of high variance.

\subsection{2D Euler equations with a three dimensional uncertainty}
For the last numerical study we again use the Euler equations, but this time not in the context of the NACA airfoil, but rather with a bend Sod shock tube experiment. The prescribed initial condition for this testcase describes a gas at rest, but with a discontinuity, in density as well as energy in the upper part of the tube. The density $\rho_u = p/(R \cdot T)$ in the upper part is set to $\rho_u = 1.289$ with $p = 101\;325$ Pa, a temperature of $273.15$ K and the specific gas constant for dry air $R=287.87$. The energy $\rho e_u$ is set to exactly $1.0$. For the initial conditions in the lower part we set $\rho_l = 0.5 \cdot \rho_u$ and $\rho e_l = 0.3$. The heat capacity ratio $\gamma$ is set to $1.4$ as in the previous studies. We again augment the deterministic case by inflicting uncertainties and again increase the number uncertainties to three. The applied boundary conditions are Euler slip conditions $\bm{v}^T\bm{n}=0$ for the walls and Dirichlet type boundary conditions at the ends of the tube which set to the deterministic initial conditions. The first two uncertainties are the density and energy of the lower part of the shock tube. Here we set $\rho_l \sim U(1.189,1389)$ and $\rho e_l \sim U(0.2,0.4)$. The third uncertainty enters through the position of the shock itself and thus $y_\text{shock} \sim U(1.0, 1.2)$. Note, that in contrast to the upper testcases, here we are not interested in the steady state of the system, but rather the of time evolution of the expectancy and variance. The computational mesh (see Figure \ref{fig:referenceSolutionsPipeMesh}) is an unstructured mesh of $25\;458$ triangular cells, where the cells are all similar in size, i.e. there are not refined regions as in the NACA testcase. The simulations were run until a time of $2.0$s. For the refinement barries values of $\delta_{-} = 2\cdot 10^{-2}$ and $\delta_{+} = 4\cdot 10^{-3}$ were set for all adaptive simulations. The corresponding results can be seen in Figure \ref{fig:solution3D}. The reference solution in Figure \ref{fig:referenceSolutionsPipe1} and \ref{fig:referenceSolutionsPipe2} was computed using SC with 50 quadrature points in each stochastic dimension, yielding a total of $50^3=125\;000$ quadrature points.\\
For this testcase we want to compare the results for sparse grids with respect to tensorized grids for higher stochastical dimensions, where both quadrature sets are based on Clenshaw-Curtis nodes. As we made used of adaptivity, the corresponding moment orders and used quadrature points can be seen in Table \ref{tab:orders3D}.
\begin{table}
	\centering
	\begin{tabular}{l|cccc}
		Sparse grids&&&&\\\hline
		\quad Moment order&1&2&-&-\\
		\quad Number of quadrature points&25&441&-&-\\
		Tensorized grids & & & & \\\hline
		\quad Moment order&1&2&3&4\\
		\quad Number of quadrature points&27&125&125&729\\
	\end{tabular}
	\caption{Order of moments and corresponding number of quadrature nodes used.}
	\label{tab:orders3D}
\end{table}
The following properties emerge:
\begin{itemize}
	\item It was not possible to obtain a result for sparse grids with moments of order 3 or higher. Any level up to level 11 ($72\;705$ quadrature nodes) of the used Clenshaw-Curtis sparse grids, resulted in an ill conditioned Hessian matrix of the dual problem and thereby to a failure of the method.
	\item Sparse grids required a significantly higher number of quadrature points for the same order of moments in comparison to tensorized grids.
\end{itemize}
Even though the solutions in Figure \ref{fig:solution3D} show the same characteristics and sparse grids are generally well suited and often used in combination with the SC method, our tests reveal that, at least for the Clenshaw-Curtis based sparse grids, these quadrature rules are not well suited to be used in combination with IPM as they require significantly more quadrature points and are only able to generate results for low orders of moments. Tensorized quadrature on the other hand performs similarly well as in the previous testcases and manages to yield a closer solution for the expectation as well as the variance due to the usage of higher moments. Tensorized quadrature rules should thus be preferred for lower stochastical dimensions. 
\begin{figure}[H]
	\centering
	\begin{subfigure}{0.31\linewidth}
		\centering
		\includegraphics[width=\linewidth]{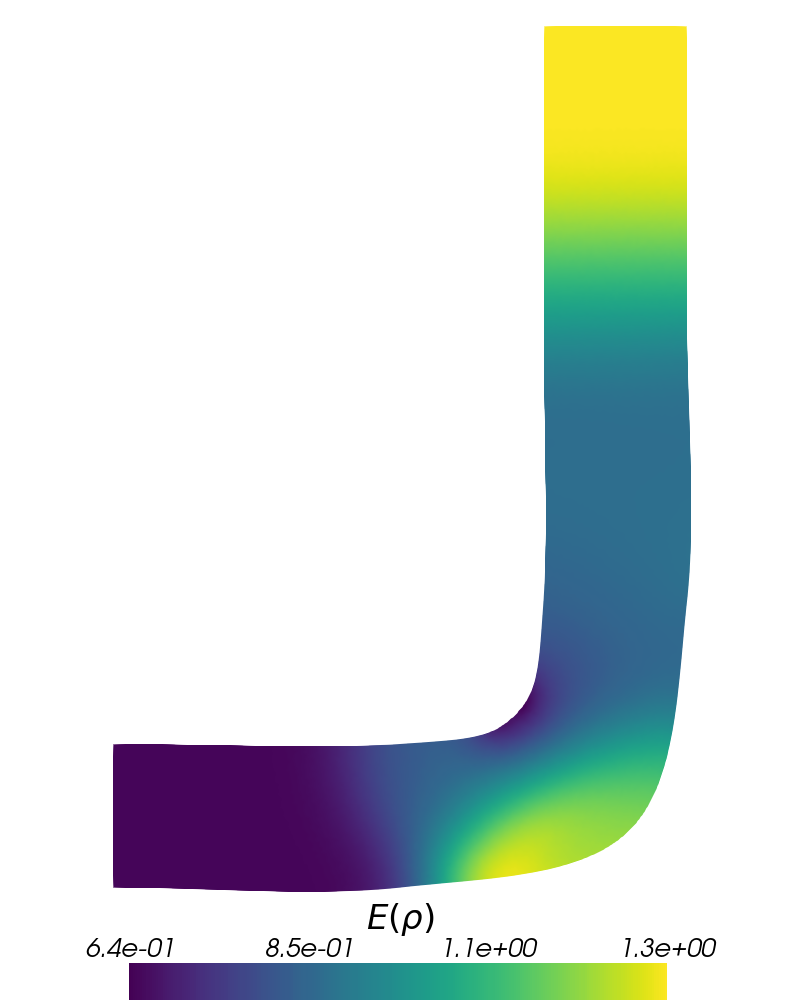}
		\caption{}
		\label{fig:referenceSolutionsPipe1}
	\end{subfigure}%
	\hfill
	\begin{subfigure}{0.31\linewidth}
		\centering
		\includegraphics[width=\linewidth]{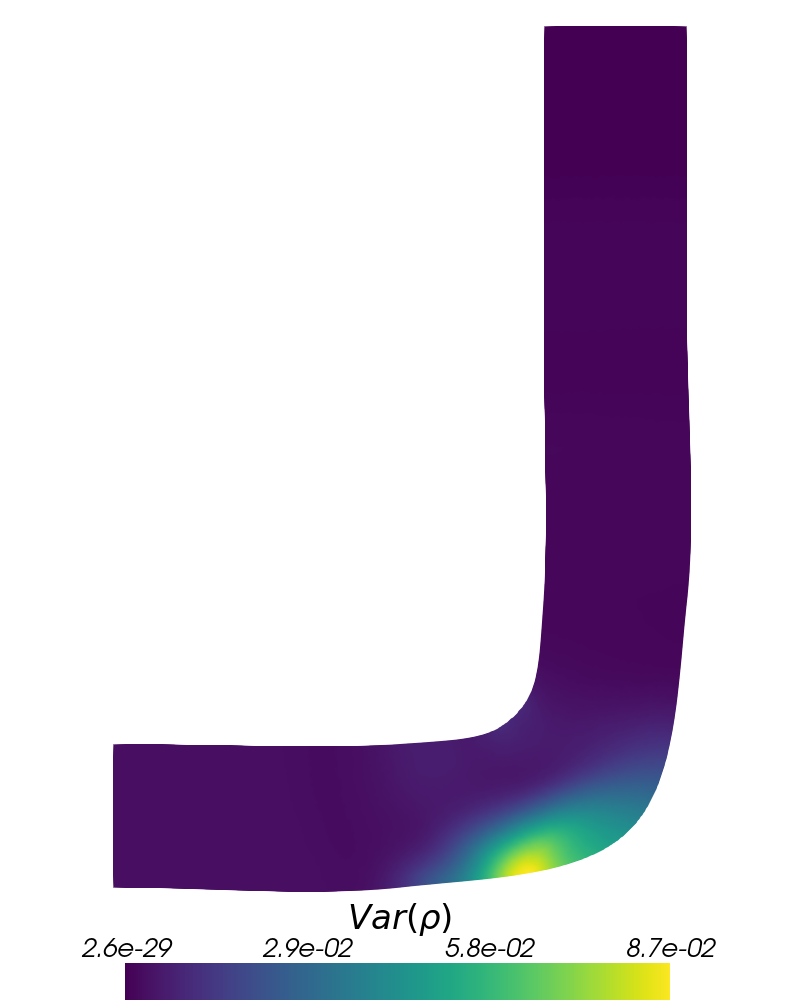}
		\caption{}
		\label{fig:referenceSolutionsPipe2}
	\end{subfigure}
	\hfill
	\begin{subfigure}{0.31\linewidth}
		\centering
		\includegraphics[width=\linewidth]{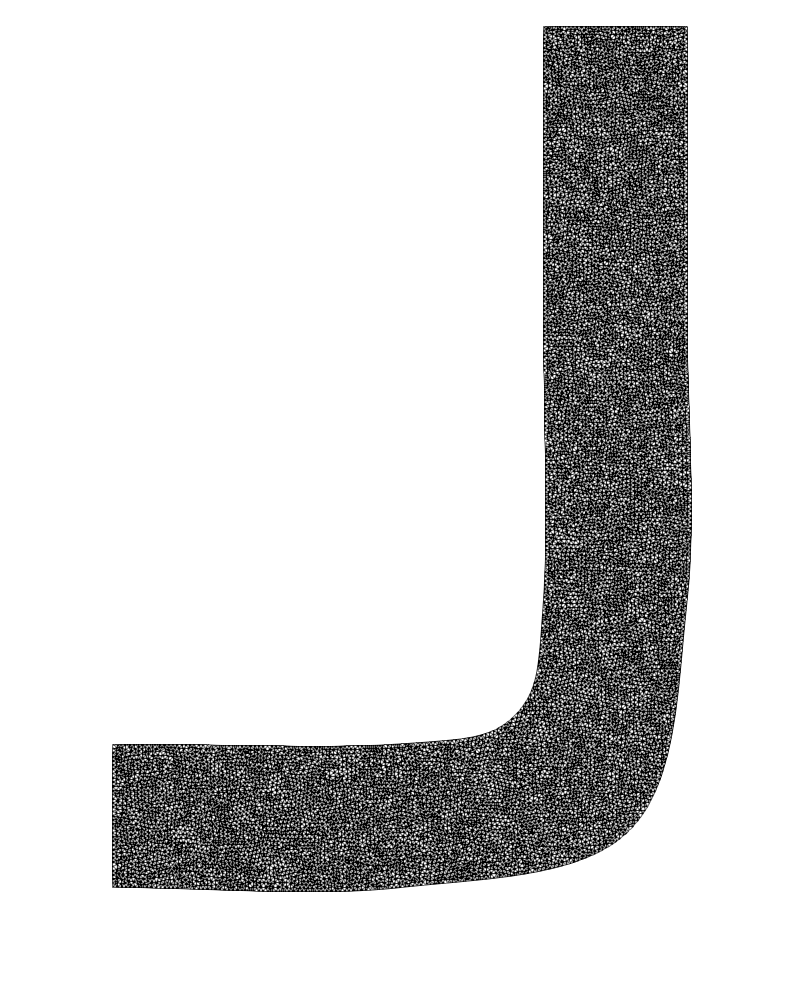}
		\caption{}
		\label{fig:referenceSolutionsPipeMesh}
	\end{subfigure}\\
	\begin{subfigure}{0.31\linewidth}
		\centering
		\includegraphics[width=\linewidth]{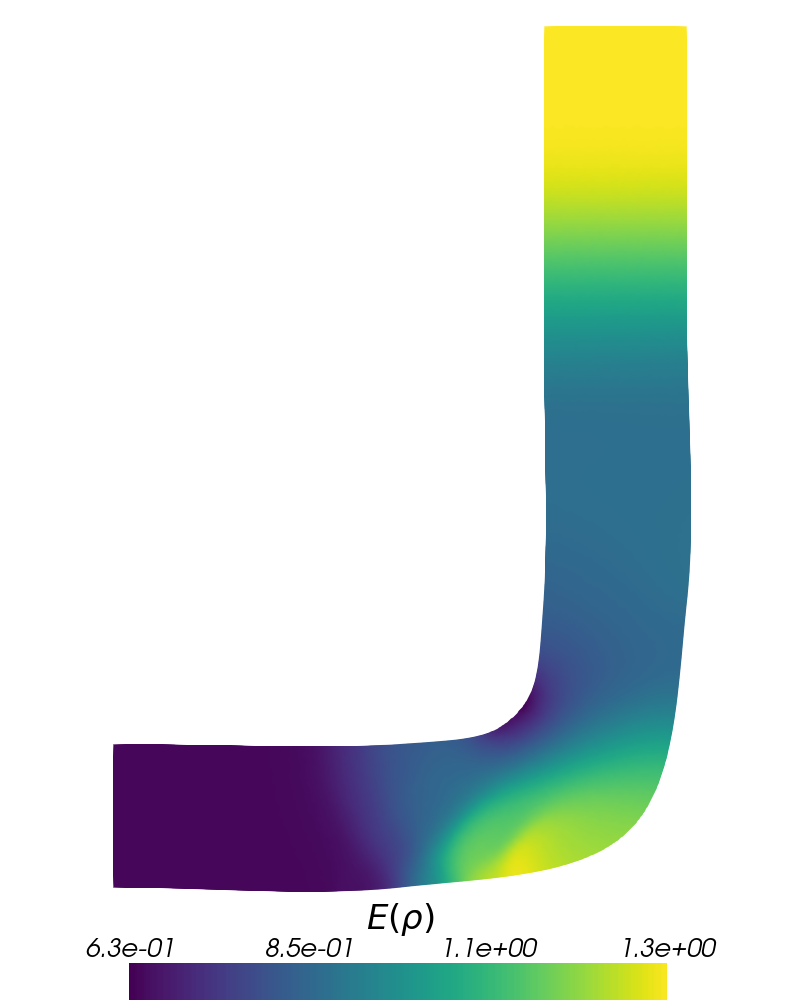}
		\caption{}
		\label{fig:adIPMSolutionsPipeSGE}
	\end{subfigure}%
	\hfill
	\begin{subfigure}{0.31\linewidth}
		\centering
		\includegraphics[width=\linewidth]{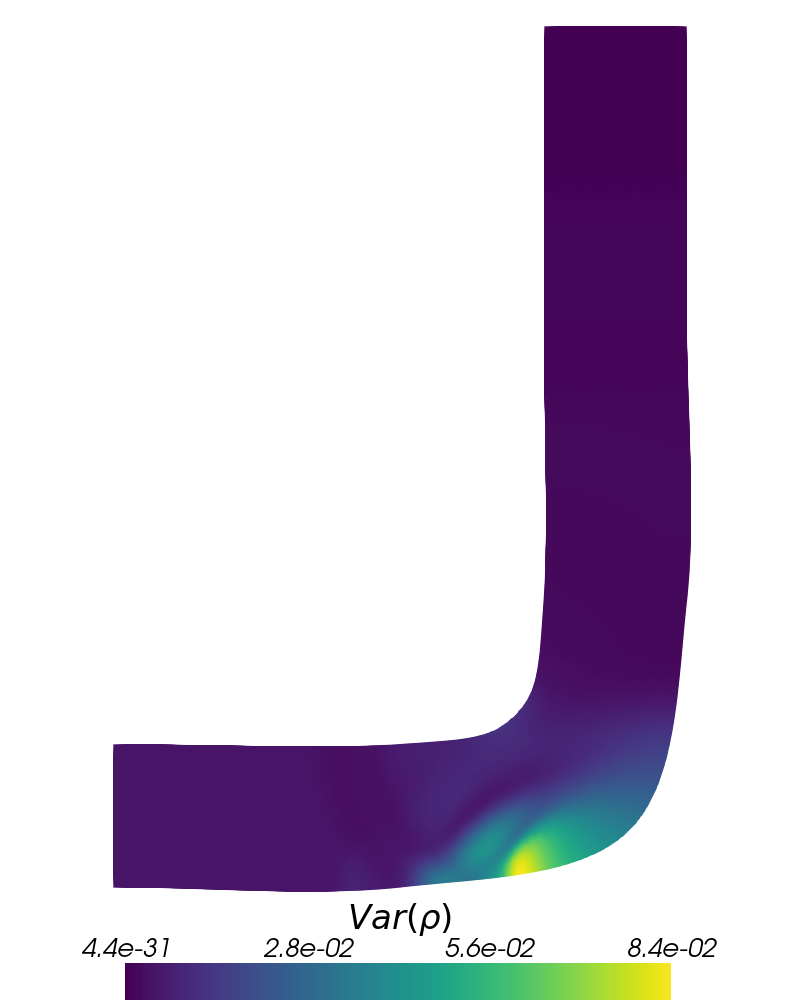}
		\caption{}
		\label{fig:adIPMSolutionsPipeSGVar}
	\end{subfigure}
	\hfill
	\begin{subfigure}{0.31\linewidth}
		\centering
		\includegraphics[width=\linewidth]{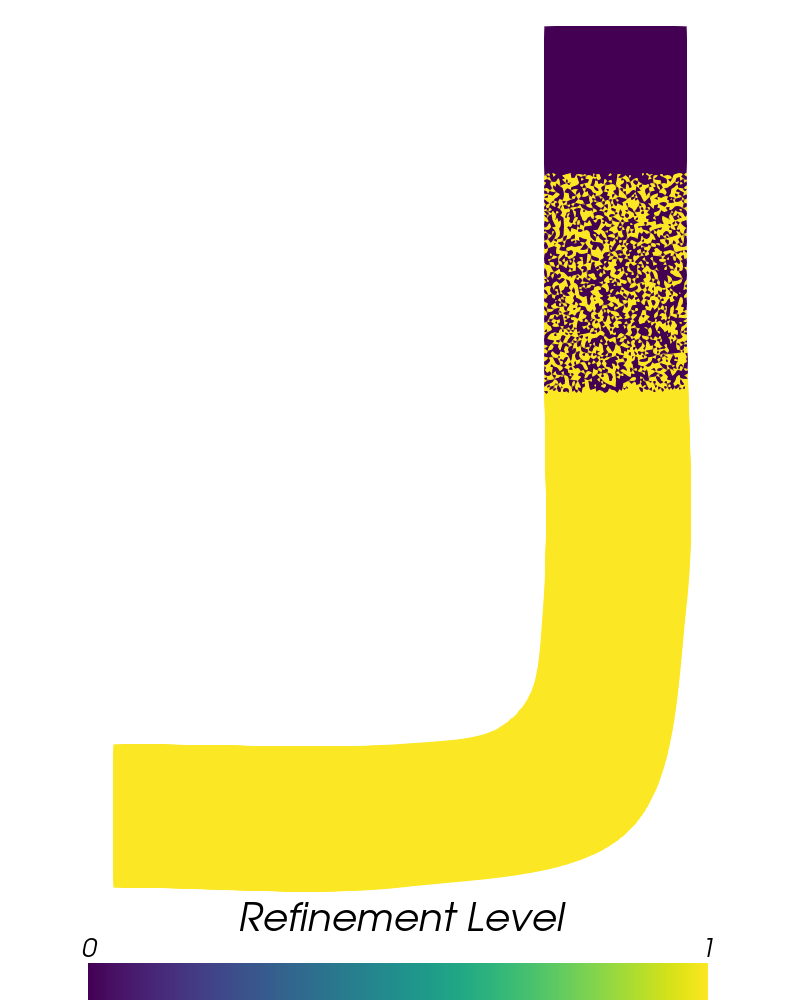}
		\caption{}
		\label{fig:adIPMSolutionsPipeSGRI}
	\end{subfigure}\\
	\begin{subfigure}{0.31\linewidth}
		\centering
		\includegraphics[width=\linewidth]{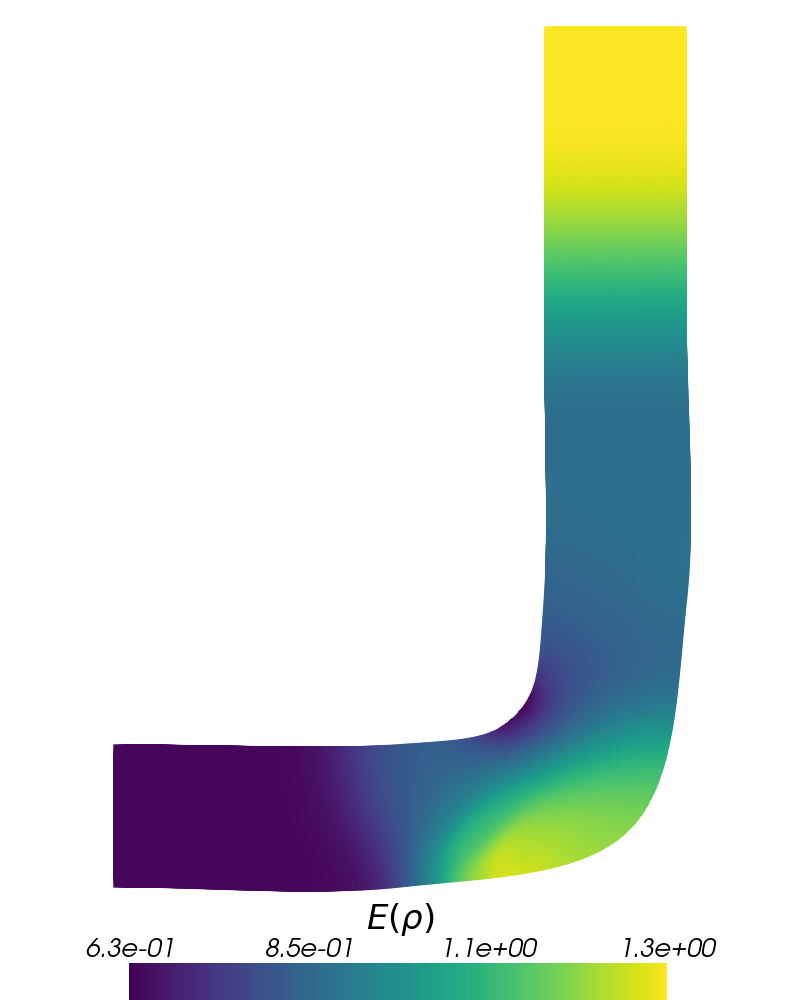}
		\caption{}
		\label{fig:adIPMSolutionsPipeTGE}
	\end{subfigure}%
	\hfill
	\begin{subfigure}{0.31\linewidth}
		\centering
		\includegraphics[width=\linewidth]{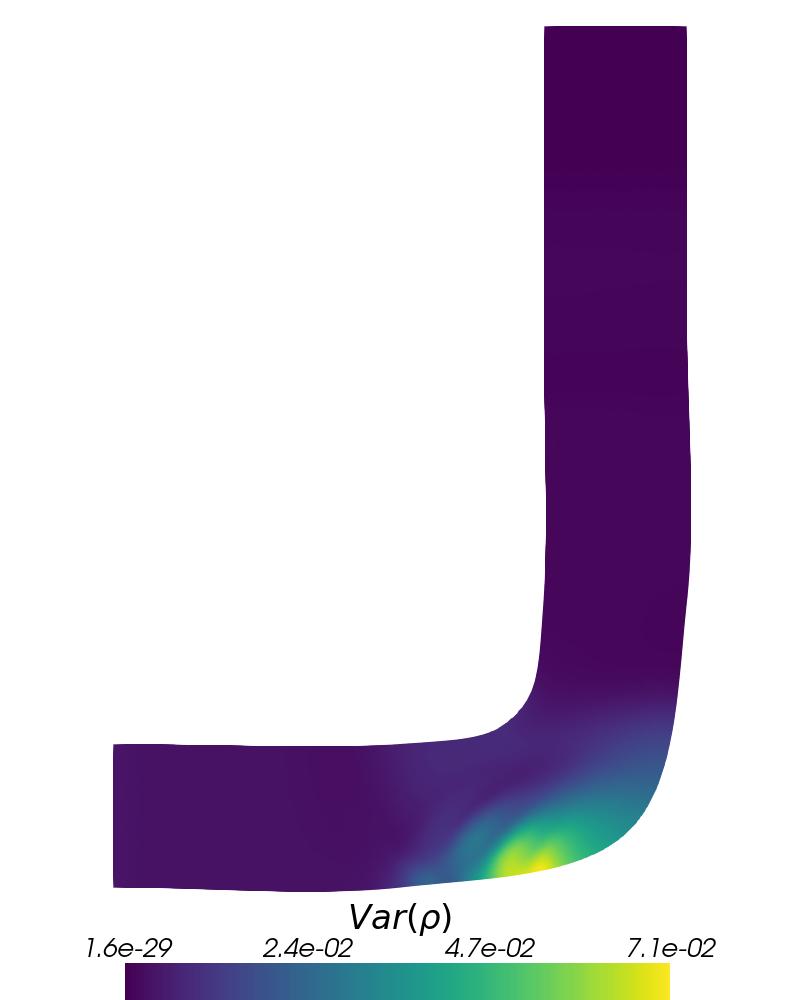}
		\caption{}
		\label{fig:adIPMSolutionsPipeTGVar}
	\end{subfigure}
	\hfill
	\begin{subfigure}{0.31\linewidth}
		\centering
		\includegraphics[width=\linewidth]{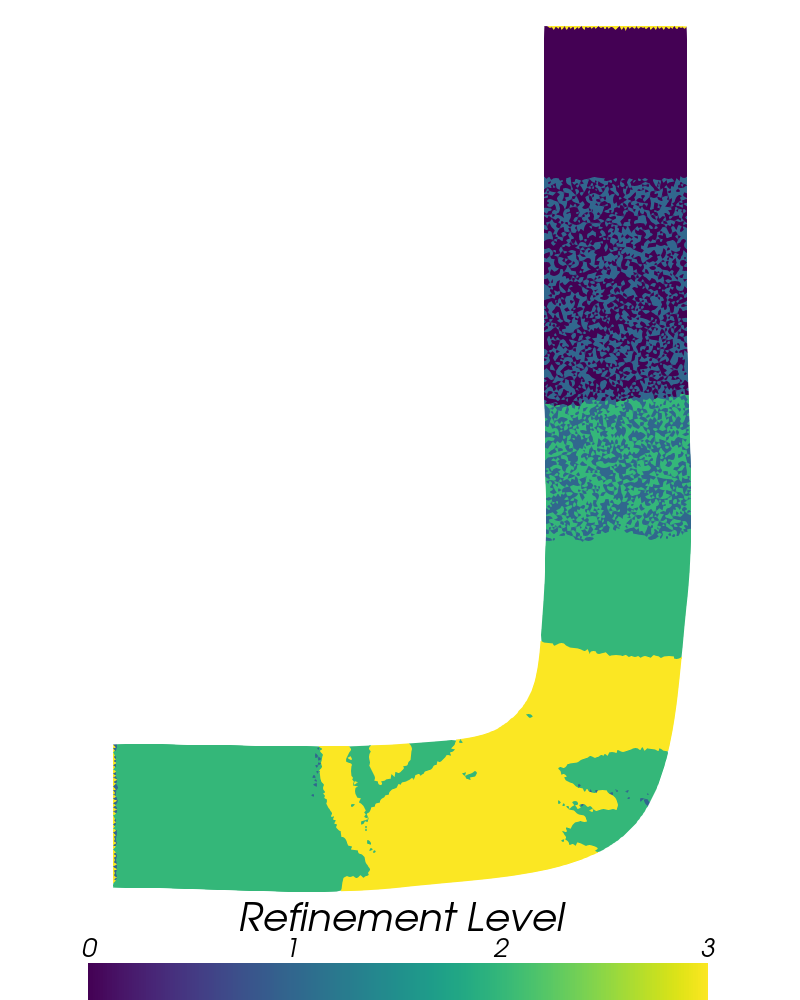}
		\caption{}
		\label{fig:adIPMSolutionsPipeTGRI}
	\end{subfigure}
	\caption{Reference solution E$[\rho]$ and Var$[\rho]$ and used computational mesh (top row) and corresponding adaptive IPM solutions for sparse (middle row) and tensorized grids (bottom row) with the used refinement levels at the last time step}
	\label{fig:solution3D}
\end{figure}
\section{Summary and outlook}
\label{sec:summary_outlook}
In this work, we proposed acceleration techniques, which are applicable to the intrusive nature of IPM and SG: We use a One-Shot technique to iterate moments and their corresponding dual variables to their steady state simultaneously. By not fully converging the dual iteration, we can reduce computational costs. Additionally, we make use of adaptivity and propose to keep a low maximal truncation order for most of the iteration to the steady state solution. Since complicated structures in the uncertain dimension only appear on a small portion of the spacial mesh, we are able to heavily reduce computational costs. The effects of the proposed techniques have been demonstrated by comparing results obtained with IPM as well as SG against SC. In our test cases, the intrusive methods yield the same error level as SC for a reduced runtime, especially since intrusive methods require less unknowns to achieve a certain accuracy due to aliasing errors. In higher-dimensional problems, this effect is amplified since the number of unknowns to achieve a certain total degree is asymptotically smaller than the number of quadrature points in a tensorized or sparse grid (\eqref{eq:numberBasisFcts} instead of $O(M(\log_2(M)^{p-1}))$). Furthermore, we could observe that the required residual at which the solution of intrusive methods reaches a steady state is smaller than for SC. Additionally, the ability to adaptively change the truncation order helps intrusive methods to compete with SC in terms of computational runtime.

In future work, we aim at further accelerating the IPM method by using non-exact Hessian approximations. Similar to the One-Shot idea of not fully converging the dual problem, it seems to be plausible to not spend too much time on computing the Hessian when the moments are not close to a steady state. Hessian approximations that can be interesting are BFGS and sparse BFGS \cite[Chapter~6.1]{nocedal2006numerical}, which construct the Hessian from previously computed gradients. Note that this strategy will increase the used memory, since old Hessians or gradients from a certain number of old time steps need to be saved in every spacial cell.
Even though the non-intrusive nature of Stochastic Collocation or Monte Carlo methods allows an easy implementation, as shown in this work, it can be helpful to intrusively modify the code in order to fully exploit all acceleration potentials. Synchronizing the time updates of the solution at different quadrature points yields an increased control over the solution during the computation, which can for example be uses to employ adaptive methods. In this case one can switch to a fine quadrature level in a certain spatial cell by for example computing moments with the given coarse set of collocation points. From these moments one can compute an IPM reconstruction, which one can then evaluate at a finer quadrature set. Another example of breaking up the non-intrusive nature of Monte Carlo methods can be found in \cite{poette2019gpc}, where the generation of random samples is combined with the sampling after collisions to increase efficiency. Furthermore, we aim at applying the proposed acceleration techniques to SG methods with hyperbolicity limiters \cite{wu2017stochastic,schlachter2018hyperbolicity}.

\section*{Acknowledgment} \noindent
This work was funded by the Deutsche Forschungsgemeinschaft (DFG, German Research Foundation) – FR 2841/6-1.

\newpage
\bibliographystyle{unsrt}  
\bibliography{references}
\appendix
\section{Costs of evaluating the numerical flux}
\label{app:costNumFlux}
In the following, we briefly discuss the number of operations needed when precomputing integrals versus the use of a kinetic flux for Burgers' equation. The stochastic Burgers' equation reads
\begin{align*}
\partial_t u + \partial_x \frac{u^2}{2} &= 0,\\
u(t=0,x,\xi) &= u_{IC}(x,\xi).
\end{align*}
The scalar random variable $\xi$ is uniformly distributed in the interval $[-1,1]$, hence the gPC basis functions $\bm\varphi=(\varphi_0,\cdots,\varphi_M)^T$ are the Legendre polynomials. Choosing the SG ansatz \eqref{eq:SGClosure} and testing with the gPC basis functions yields the SG moment system
\begin{align*}
\partial_t \hat u_i + \partial_x \frac12\sum_{n,m = 0}^M \hat u_n \hat u_m \langle \varphi_n\varphi_m\varphi_i \rangle = 0.
\end{align*}
Defining the matrices $\bm C_i := \langle \bm\varphi\bm\varphi^T\varphi_i\rangle\in\mathbb{R}^{N\times N}$ gives
\begin{align*}
\partial_t \bm{\hat u} + \partial_x \bm F(\bm{\hat u}) = \bm 0
\end{align*}
with $F_i(\bm{\hat u}) = \frac12\bm{\hat u}^T\bm C_i\bm{\hat u}$. Note that $\bm{C}_i$ can be computed analytically, hence choosing a Lax-Friedrichs flux
\begin{align}\label{eq:numFluxAnalytic}
G_i^{(LF)}(\bm{\hat u}_{\ell},\bm{\hat u}_{r}) =\frac{1}{4}\left(\bm{\hat u}_{\ell}^T \bm{C}_i \bm{\hat u}_{\ell}+\bm{\hat u}_{r}^T \bm{C}_i \bm{\hat u}_{r}\right) - \frac{\Delta x}{2\Delta t}(\bm{\hat u}_{r}-\bm{\hat u}_{\ell})_i
\end{align}
requires no integral evaluations. Recall, that the numerical flux choice made in this work gives
\begin{align}\label{eq:numericalFluxIPMBurgers}
 \bm{G}(\bm{\hat u}_{\ell},\bm{\hat u}_{r}) = \sum_{k=1}^Q w_k g(\mathcal{U}(\bm{\hat u}_{\ell};\xi_k),\mathcal{U}(\bm{\hat u}_{r};\xi_k))\bm{\varphi}(\xi_k)f_{\Xi}(\xi_k),
\end{align}
where $\mathcal{U}$ is the SG ansatz \eqref{eq:SGClosure}. When the chosen deterministic flux $g$ is Lax-Friedrichs, the order of the polynomials inside the sum is $3M=3(N-1)$. Choosing a Gauss-Lobatto quadrature rule, $Q = \frac32 N -1$ quadrature points suffice for an exact computation of the numerical flux. Indeed, with this choice of quadrature points, the numerical fluxes \eqref{eq:numFluxAnalytic} and \eqref{eq:numericalFluxIPMBurgers} are equivalent. 
Counting the number of operations, one observes that our choice of the numerical flux \eqref{eq:numericalFluxIPMBurgers} uses $O(N^2)$ operations whereas \eqref{eq:numFluxAnalytic} requires $O(N^3)$ operations: When computing and storing the values in a matrix $\bm A\in\mathbb{R}^{Q\times N}$ with entries $a_{ki} = \varphi_i(\xi_k)$ before running the program, the numerical flux \eqref{eq:numericalFluxIPMBurgers} can be split into two parts. First, we determine the SG solution at all quadrature points, i.e. we compute $\bm{u}^{(\ell)} := \bm A \bm{\hat u}_{\ell}$ and $\bm{u}^{(r)} := \bm A \bm{\hat u}_{r}$ which requires $O(N\cdot Q)$ operations. These solution values are then used to compute the numerical flux
\begin{align*}
G_i(\bm{\hat u}_{\ell},\bm{\hat u}_{r}) &= \sum_{k=1}^Q w_k g(u^{(\ell)}_k,u^{(r)}_k)a_{ki}f_{\Xi}(\xi_k),
\end{align*}
which again requires $O(N\cdot Q)$ operations, i.e. the costs are $O(N^2)$. The evaluation of \eqref{eq:numFluxAnalytic} however requires $O(N^3)$ operations.

\section{IPM for the 2D Euler equations}
\label{app:IPM2DEuler}

In the following, we provide details on the implementation of IPM for the 2D Euler equations. For ease of presentation, we denote the momentum by $m_1 := \rho v_1$ and $m_2:=\rho v_2$ and the energy by $E:=\rho e$. Then, the vector of conserved variables is $\bm u = (\rho,m_1,m_2,E)^T$. The entropy used is
\begin{align*}
s(\bm u) = -\rho \ln \left(\rho^{-\gamma} \left(E - \frac{m_1^2 + m_2^2}{2
\rho}\right)\right).
\end{align*}
Now the gradient of the entropy $\nabla_{\bm u} s$ has the components
\begin{align*}
\frac{\partial s}{\partial \rho} &= -\ln \left(\rho^{-\gamma}\left(E-\frac{m_1^2+m_2^2}{2 \rho }\right)\right)+\frac{m_1^2+m_2^2}{-2 \rho  E+m_1^2+m_2^2}+\gamma, \\
\frac{\partial s}{\partial m_i} &= -\frac{2\rho  m_i}{-2 \rho  E+m_1^2+m_2^2}, \\
\frac{\partial s}{\partial E} &=-\frac1\rho\left(E-\frac{m_1^2+m_2^2}{2 \rho }\right).
\end{align*}
To compute $\bm u_s(\bm\Lambda) = (\nabla_{\bm u}s)^{-1}(\bm \Lambda)$, we set $\bm \Lambda = \nabla_{\bm u}s(\bm u)$ and rearrange with respect to $\bm u$. Let us define
\begin{align*}
\alpha(\bm\Lambda) := \text{exp}\left(\frac{ \Lambda_2^2 + \Lambda_3^2 - 2\Lambda_1  \Lambda_4 - 2 \Lambda_4  \gamma }{ 2 \Lambda_4(1-\gamma) } \right) \cdot (-\Lambda_4)^{\frac{1}{1-\gamma}}
\end{align*}
Then the solution ansatz $\bm u_s$ is given by
\begin{align*}
\rho(\bm\Lambda) &= \alpha(\bm\Lambda),\qquad m_1(\bm\Lambda) = -\frac{\Lambda_2 \alpha(\bm{\Lambda})}{\Lambda_4},\qquad m_2(\bm\Lambda) = -\frac{\Lambda_3 \alpha(\bm{\Lambda})}{\Lambda_4}, \\
E(\bm\Lambda) &= -\frac{  \alpha(\bm{\Lambda}) ( -\Lambda_2^2 - \Lambda_3^2 + 2\Lambda_4 ) }{ 2 \Lambda_4^2}.
\end{align*}
\end{document}